\newcommand\notsotiny{\@setfontsize\notsotiny{6}{7}}
\newtheorem{theorem}{Theorem}
\newtheorem{lemma}{Lemma}
\newtheorem{proposition}{Proposition}
\newtheorem{definition}{Definition}
\newtheorem{assumption}{Assumption}
\newtheorem{remark}{Remark}
\begin{document}
\title{The landscape of deterministic and stochastic optimal control problems: One-shot Optimization versus Dynamic Programming}
\author{Jihun Kim, Yuhao Ding, Yingjie Bi, and Javad Lavaei, \textit{Fellow, IEEE}
\thanks{This work was supported by grants from ARO, ONR, AFOSR, NSF, and the UC Noyce Initiative.}
\thanks{J. Kim, Y. Ding, Y. Bi, and J. Lavaei are with the Department of Industrial Engineering and Operations Research, University of California, Berkeley, CA 94720 USA (e-mail: jihun.kim@berkeley.edu; yuhao\_ding@berkeley.edu; yb236@cornell.edu; lavaei@berkeley.edu).}
\thanks{A preliminary version of this paper has appeared in 2021 American Control Conference, New Orleans, USA, May 25-28, 2021 \cite{ding2021analysis}. The previous version mainly discussed the deterministic problem with control inputs, while this journal version has significantly extended the results to include both the deterministic and the stochastic problems under a parameterized policy to study a closed-loop system. To address the parameterized problem, our new notion of a local minimizer of the one-shot optimization optimizes the objective function over the parameters modeling the inputs. Furthermore, the notion of a locally minimum control policy of DP is replaced with a local minimizer of DP, which considers a neighborhood in the parameter space instead of the action space. These new notions enable the investigation of the stochastic dynamics as a finite-dimensional problem.}}

\maketitle

\begin{abstract}
Optimal control problems can be solved via a one-shot (single) optimization or a sequence of optimization using dynamic programming (DP).
However, the computation of their global optima often faces NP-hardness, and thus only locally optimal solutions may be obtained at best. In this work, we consider the discrete-time finite-horizon optimal control problem in both deterministic and stochastic cases and study the optimization landscapes associated with two different approaches: one-shot and DP. In the deterministic case, we prove that each local minimizer of the one-shot optimization corresponds to some control input induced by a locally minimum control policy of DP, and vice versa. 
However, with a parameterized policy approach, we prove that deterministic and stochastic cases both exhibit the desirable property that each local minimizer of DP corresponds to some local minimizer of the one-shot optimization, but the converse does not necessarily hold. Nonetheless, under different technical assumptions for deterministic and stochastic cases, if there exists only a single locally minimum control policy, one-shot and DP turn out to capture the same local solution. 
These results pave the way to understand the performance and stability of local search methods in optimal control.
\end{abstract}

\begin{IEEEkeywords}
Optimal Control, Landscape, One-shot optimization, Dynamic programming
\end{IEEEkeywords}

\section{Introduction}
\label{sec:intro}

Dynamic Programming (DP) has been widely used in a variety of fields with a rich theoretical foundation and many mathematical and algorithmic aspects \cite{bellman1957, bertsekas1995dynamic}. One classic area of DP is to solve optimal control problems, with applications in communication systems \cite{bellman1957role}, inventory control \cite{feinberg2016optimality}, powertrain control \cite{kolmanovsky2002optimization}, and many more. Furthermore, many recent successes in artificial intelligence, especially in reinforcement learning (RL) \cite{bertsekas2019reinforcement, sutton2018reinforcement}, are also deeply rooted in DP. In the challenging domain of classic Atari 2600 games, the work \cite{mnih2015human} has demonstrated that the Q-learning method based on the generalized policy iteration along with a deep neural network as the function approximator for the Q-values surpasses the performance of all previous algorithms and achieves a level comparable to that of a professional human games tester.  

Despite a strong theoretical framework of DP, the exact solutions of large-scale optimal control problems are often impossible to obtain using DP in practice \cite{bertsekas2019reinforcement}. Apart from suffering the ``curse of dimensionality'' when the state space is large, solving DP accurately could also be highly complex. 
The reason is that DP requires solving optimization sub-problems to global optimality, and the computation of their global optima is NP-hard in general, due to the non-linearity of the dynamics and the non-convexity of the cost function.

Therefore, although DP theory relies on global optimization solvers, practitioners routinely use local optimization solvers based on first- and second-order numerical algorithms. As a result, the theoretical guarantee of DP could break down as soon as a non-global local solution is found in any of the sub-problems. Understanding the performance of local search methods for non-convex problems has been a focal area in machine learning in recent years. This is performed under the notion of spurious solution, which refers to a local minimum that is not a global solution. The specific application areas are neural networks \cite{soltanolkotabi2019theoretical, liu2022landscape}, deep learning \cite{allen2019convergence, liang2022revisit}, mixtures of regressions \cite{mei2018landscape, chandrasekher2023sharp}, matrix sensing/recovery \cite{molybogrole, zhang2021sharp, zhang2022improved, ma2022sharp}, phase retrieval \cite{chen2019gradient, chandrasekher2023sharp}, and online optimization \cite{ding2023escape, fattahi2023absence}.

Recently, there has been an increasing interest in understanding the global convergence of exact or approximate DP algorithms in policy gradient methods for RL, such as projected policy gradient, natural policy gradient, and mirror descent with or without regularizers \cite{agarwal2021policygradient, bhandari2021policygradient, xiao2022policygradient, lan2023policy, zhan2023policy}. Prior to them, the work \cite{bhandari2024global} identified some general algorithm-independent properties of the policy gradient method by establishing a direct connection between policy gradient (one-shot) and policy iteration (DP) objectives. They showed that the global convergence of the policy gradient method is guaranteed if the policy iteration objectives have no sub-optimal stationary points. However, the literature lacks a rigorous analysis of the spurious solutions of the DP method.

In this paper, we analyze the spurious solutions of the DP method by focusing on the following fundamental question: \textit{What if the globally optimal solution of each sub-problem of DP is replaced with a solution obtained by a local search method?} A challenge in this analysis is that policy optimization even towards the spurious solutions can be problematic if the action space is continuous \cite{Silver2014DeterministicPG}. One can think of the policy iteration with function approximation \cite{sutton1999policygradient} where the Q-function approximation error is zero. This is a reasonable assumption since a close-to-zero error can be obtained with a sufficiently rich and expressive policy class such as deep neural networks, which naturally yields the existence of the local minimizer of DP. That motivates our analysis on the comparison between the solutions of one-shot method and DP if they are only solved to the spurious local minimizers, and hence, our algorithm-agnostic study offers a clear understanding on the landscapes for the optimal control problem without considering the secondary issue of the approximation error.

We focus on both deterministic and stochastic discrete-time finite-horizon optimal control problems whose goal is to find an optimal input sequence minimizing the total cost subject to the dynamics. One approach to solving the problem is by formulating it as a one-shot optimization problem, a single entire-period problem, and another approach is using the DP to formulate it as a sequential decision-making problem with multiple single-period sub-problems and solve it in a backward way. Although it is known that the one-shot method and the DP method return the same globally optimal control sequence for the deterministic optimal control problem \cite{bertsekas1995dynamic}, it is not yet known what would occur if the global optimizer needed for solving each sub-optimization problem in DP is replaced by a local optimizer. In our work, we compare the two optimization landscapes: one induced by the DP method based on local search algorithms, and the other induced by its corresponding one-shot optimization based on local search methods. 

\textbf{Contribution and Outline.} We address the relationship between the two landscapes holistically for three types of control systems:

1. In Section \ref{sec:det}, we first study \textit{deterministic} systems under a non-parameterized policy. We introduce the notion of locally minimum control policy of DP and prove that under some mild conditions, each (spurious) local minimizer of the one-shot optimization corresponds to the control input induced by a (spurious) locally minimum control policy of DP, and vice versa. This indicates that DP with local search can successfully solve the optimal control problem to global optimality if and only if the one-shot problem is free of spurious solutions.

2. In Section \ref{sec:detparam}, we analyze \textit{deterministic} systems under a \textit{parameterized} policy. The necessity to study this problem arises in RL algorithms, where the control policy used by DP is parameterized by neural networks or other means. Thus, we generalize the results of Section \ref{sec:det} to optimization with respect to the parameters rather than the control inputs themselves. We prove that each local minimizer of DP corresponds to some local minimizer of the one-shot optimization, whereas its converse may not hold. Moreover, we show that if there exists only a single locally minimum control policy with a specific parameterized policy class, namely a linear combination of independent basis functions, each local minimizer of the one-shot optimization corresponds to a local minimizer of DP. 

3. In Section \ref{sec:stoparam}, we extend the result to \textit{stochastic} systems under a \textit{parameterized} policy. The stochasticity brings up the challenge to handle an uncountable number of realizations of random variables. We show that surprisingly a similar relationship in the deterministic parameterized problem holds. For both cases, 
we conclude that the optimization landscape of the one-shot problem is more complex than its DP counterpart
in terms of the number of spurious solutions. 
This implies that if the one-shot problem has a \textit{low} complexity, so does the DP problem. Another result says that if the DP problem has a \textit{very low} complexity, the same holds for the one-shot problem. In this paper, our notion of ``complexity" of an optimization problem is based on \textit{the number of spurious local minima}. For example, convex optimization problems have very low complexity in light of having no spurious solutions. However, problems with an exponential number of spurious solutions are hard to solve \cite{yalcin2022exp}. Note that a reformulation of an optimization problem via a change of variables does not normally change the number of local minima, which justifies why the number of spurious solutions can serve as a complexity measure.

Finally, concluding remarks are provided in Section \ref{sec:con}. Table \ref{intro table} summarizes the main results of the paper.

\begin{table}[t]
\notsotiny
\centering
\caption{Theorems and the corresponding assumptions with results.}
\begin{tabular}{|l|l||*{9}{c|}}
\hline
\multicolumn{2}{|l||}
{\multirow{3}{*}{\backslashbox[30mm]{Assumptions}{Theorems}}}
&\multicolumn{3}{|c|}{\multirow{2}{*}{Deterministic}}
&\multicolumn{3}{|c|}{Deterministic +}
&\multicolumn{3}{|c|}{Stochastic +} \\ 
\multicolumn{2}{|l||}{} & \multicolumn{3}{|c|}{}
&\multicolumn{3}{|c|}{Parameterized}
&\multicolumn{3}{|c|}{Parameterized}
\\ \cline{3-11}
\multicolumn{2}{|l||}{}   & \ref{thm: DP to one-shot epsilon PD} & \ref{thm: stationary} & \ref{thm: one-shot to DP} & \ref{thm: detparam loc min} & \ref{thm: detparam stationary} & \ref{thm: detparam one-shot to DP} & \ref{thm: stoparam loc min} & \ref{thm: stoparam stationary} & \ref{thm: stoparam one-shot to DP}

\\\hline\hline
\multirow{2}{*}{Convex} & action space&$\circ$&&&&&&&& \\ \cline{2-11} & parameter space&&&&&&$\circ$&&& \\  \hline
 & $C^1$ &&$\circ$&&&$\circ$&&&$\circ$& \\ 
\cline{2-11} Policy & $C^2$ &$\circ$&&&&&&&& \\
\cline{2-11} class & Defined by &&&&&& \multirow{2}{*}{$\circ$} &&& \multirow{2}{*}{$\circ$} \\
&Definition \ref{def: linear combination} &&&&&&&&& \\
\cline{2-11} & contains a single  &&&&&&&&& \\
& locally minimum  &&&&&&$\circ$&&&$\circ$ \\
& control policy  &&&&&&&&& \\\hline

\multicolumn{2}{|l||}{Interior policy} &&$\circ$&&&&&&&$\circ$ \\ \hline
\multicolumn{2}{|l||}{Strict local minimizer} &&&$\circ$&&&$\circ$&&& \\ \hline
\multicolumn{2}{|l||}{Continuous Random state} &&&&&&&&&$\circ$ \\ \hline
\multicolumn{2}{|l||}{Large parameter space} &&&&&&$\circ$&&& \\ \hline\hline
\multirow{4}{*}{Result} & DP to one-shot &$\circ$&&&$\circ$&&&$\circ$&& \\ \cline{2-11}
& DP to one-shot &&\multirow{2}{*}{$\circ$}&&&\multirow{2}{*}{$\circ$}&&&\multirow{2}{*}{$\circ$}& \\ 
& (stationarity) &&&&&&&&& \\ \cline{2-11}
& one-shot to DP &&&$\circ$&&&$\circ$&&& $\circ$ \\ \hline
\end{tabular}
\vspace{-3.9mm}
\label{intro table}
\end{table}

In various applications arising in machine learning and model-free approaches for which the model is unknown and simulations are expensive, DP is the only viable choice compared to the one-shot optimization approach. Hence, it is essential to understand when DP combined with a local search solver works. The results of this paper explain that the success of DP is closely related to the optimization landscape of a single optimization problem. For instance, the success of the DP method highly depends on the number of spurious solutions of the one-shot optimization problem.



\textbf{Notation.} Let $\mathbb{R}$ denote the set of real numbers. We use $B(c,r)$ to denote the open ball centered at $c$ with radius $r$ and use $\bar{B}(c,r)$ to denote the closure of $B(c,r)$. The notation $x\in A\setminus B$ means that $x$ is in the set $A$ but not in the set $B$. Let $\|\cdot\|$ denote the Euclidean norm and $\|\cdot\|_F$ denote the Frobenius norm. Let $\nabla_x f(x,y)$ denote the gradient of $f(x,y)$ with respect to $x$ and $\nabla^2_x f(x)$ denote the Hessian of $f(x)$. For the matrix $K$, $K \succ 0$ means that $K$ is positive definite. The notation $C^n$ means that the function is $n$-times continuously differentiable. The notation $\mathbb{E}$ denotes the expectation operator.

\section{Deterministic Problem}
\label{sec:det}

\subsection{Problem Formulation}
\label{sec:prob-form}

Consider a general discrete-time finite-horizon deterministic optimal control problem with $n$ time steps:
\begin{equation}\label{eq: P1}
\tag{P1}
\begin{alignedat}{2}
\min_{u_0,\dots,u_{n-1}\in A} \quad &\mathrlap{\sum_{i=0}^{n-1} c_i(x_i,u_i)+ c_n(x_n)}\\
\nonumber \text{s.t.} \quad & x_{i+1}=f_i(x_i,u_i), & \quad & i=0,\dots,n-1, \\
&x_0 \text{ is given},
\end{alignedat}
\end{equation}
where $x_i\in \mathbb{R}^N$ is the state at time $i$ and $u_i$ is the control input at time $i$ that is constrained to be in an action space $A \subseteq \mathbb{R}^M$. The state transition is governed by the dynamics $f_i:\mathbb{R}^N \times \mathbb{R}^M \to \mathbb{R}^N$. Each time instance $i$ is associated with a stage cost $c_i:\mathbb{R}^N \times \mathbb{R}^M \to \mathbb{R}$ or the terminal cost $c_n:\mathbb{R}^N \to \mathbb{R}$. Given an initial state $x_0$, the goal of the optimal control problem is to find an optimal control input $(u_0,\dots,u_{n-1})$ minimizing the sum of the stage costs and the terminal cost. 
In this paper, the dynamics $f_i$ and the cost functions $c_i$ are assumed to be at least twice continuously differentiable over $\mathbb{R}^N \times \mathbb{R}^M$, and the action space $A$ is assumed to be compact.

The optimal control problem can be solved by two common approaches. The first approach directly solves \eqref{eq: P1} as a one-shot optimization problem that simultaneously solves for all variables. To simplify the analysis, we eliminate the equality constraints in \eqref{eq: P1} via the notation $C(x_k;u_k,\dots,u_{n-1})$
defined as the cost-to-go started at the time step $k$ with the initial state $x$ and control inputs $u_k,\dots,u_{n-1}$. In other words, 
\begin{align*}
&C(x)=c_n(x), \\
&C(x;u_k,\dots,u_{n-1})=c_k(x,u_k) \\
&\hspace{10em}+C(f_k(x,u_k);u_{k+1},\dots,u_{n-1}),
\end{align*}
for $k=0,\dots,n-1$. The one-shot optimization problem \eqref{eq: P1} can be equivalently written as
\begin{equation}\label{eq:P2}
\tag{P2}
\begin{split}
\min_{u_0,\dots,u_{n-1}\in A} \quad & C(x_0;u_0,\dots,u_{n-1}).
\end{split}
\end{equation}

The second approach to solving the optimal control problem is based on DP. Let $J_k(x_k)$ denote the optimal cost-to-go at the time step $k$ with the initial state $x_k$, \textit{i.e.},
\begin{align*}
J_k(x_k)=\min_{u_k, \dots, u_{n-1}\in A} \quad & C(x_k;u_k,\dots,u_{n-1}) 
\end{align*}
Then, $J_k$ can be computed in a backward fashion from the time step $n-1$ to time $0$ through the following recursion:
\begin{align}\label{eq:DP}
\nonumber J_n(x)&=c_n(x),\\
J_k(x)&=\min_{u \in A}\{c_k(x,u)+J_{k+1}(f_k(x,u))\}, \tag{P3}
\end{align}
for $k=0,\dots,n-1$. It is worth noting that \eqref{eq:DP} yields a set of \textit{functions} that solve the problem for all initial states, whereas \eqref{eq: P1} produces a \textit{vector} specific to a given $x_0$.
The optimal cost $J_0(x_0)$ equals the optimal objective value of \eqref{eq: P1}.

However, due to the non-convexity of the function, it is generally NP-hard to obtain globally optimal solutions of \eqref{eq:DP} for all states and at all times.  Specifically, when using the DP to solve the optimal control problem \eqref{eq: P1}, the first step is to compute $\min_{u \in A}  \{c_{n-1}(x_{n-1},u)+c_n(f_{n-1}(x_{n-1},u))\}$ for every $x_{n-1} \in \mathbb{R}^N$, which requires solving nonconvex optimization problems if the cost function or the dynamic is nonconvex. Since these intermediate problems are normally solved via local search methods, the best expectation is to obtain a local minimizer for $u_{n-1}$ as a function of $x\in \mathbb{R}^N$, denoted by the policy  $\pi_{n-1}(x)$. As a result, instead of working with truly optimal cost-to-go functions, one may arrive at a sub-optimal cost-to-go at time $n-1$ as follows:
\begin{align*}
J^\pi_{n-1}(x_{n-1}) = c_{n-1}(x_{n-1},&\pi_{n-1}(x_{n-1})) + \\ &c_n(f_{n-1}(x_{n-1},\pi_{n-1}(x_{n-1}))),
\end{align*}
which is obtained based on the local minimizer $\pi_{n-1}(x)$. Subsequently, it is required to solve the optimal decision-making problem $\min_{u \in A} \{c_{n-2}(x_{n-2},u)+J^\pi_{n-1}(f_{n-2}(x_{n-2},u))\}$ for every $x_{n-2} \in \mathbb{R}^N$. By repeating this procedure in a backward fashion  toward the time step 0, we obtain a
group of policy functions $\pi_k$ and sub-optimal cost-to-go functions $J^\pi_k$ for $k=0,\dots,n-1$. Given the initial state $x_0$,
let
\begin{gather*}
u_0^*=\pi_0(x_0), \ x_1^*=f_0(x_0,u_0^*), \
u_1^*=\pi_1(x_1^*), \ x_2^*=f_1(x_1^*,u_1^*) \\
\dots \\
u_{n-1}^*=\pi_{n-1}(x_{n-1}^*), \quad x_n^*=f_n(x_{n-1}^*,u_{n-1}^*),
\end{gather*}
be the control inputs and the states induced by the policies $\pi_0,\dots,\pi_{n-1}$. Then,  $(u_0^*,\dots,u_{n-1}^*)$ is a sub-optimal solution to the original optimal control problem \eqref{eq: P1} with the sub-optimal objective value  $J^\pi_0(x_0)$.
This motivates us to define locally minimum control policies based on solving \eqref{eq:DP} to local optimality.

\begin{definition} \label{def: Q and cost-to-go}
Given a control policy $\pi=(\pi_0,\dots,\pi_{n-1})$,
the associated Q-functions $Q^\pi_k(\cdot,\cdot)$ and cost-to-go functions $J^\pi_k(\cdot)$ under the policy $\pi$ are defined in a backward way from the time step $n-1$ to $0$ through the following recursion:
\vspace{-1pt}
\begin{align*} 
&J^\pi_n(x)=c_n(x),\\
&Q^\pi_k(x,u)=c_k(x,u)+J^\pi_{k+1}(f_k(x,u)),\quad  k=0,\dots,n-1, \\
&J^\pi_k(x)=Q^\pi_k(x,\pi_k(x)),\quad k=0,\dots,n-1.
\end{align*}
\end{definition}
\vspace{5pt}
\begin{definition}[local minimizer]\label{def: local min OS}
A vector $(u_0^*,\dots,u_{n-1}^*)$ is said to be a local minimizer of the one-shot optimization problem \eqref{eq:P2} if there exists $\epsilon>0$ such that
\[C(x_0,u_0^*,\dots,u_{n-1}^*) \leq  C(x_0,\Tilde{u}_0,\dots,\Tilde{u}_{n-1})\]
for all $\Tilde{u}_i \in B(u_i^*,\epsilon)\cap A$ where $i=0,\dots, n-1$. It is further called a spurious (non-global) local minimizer of the one-shot optimization problem if $C(x_0,u_0^*,\dots,u_{n-1}^*)>J_0(x_0)$.
\end{definition}
\vspace{3pt}
\begin{definition}[locally minimum control policy] \label{def: local min DP}
A control policy $\pi=(\pi_0,\dots,\pi_{n-1})$
is said to be a locally minimum control policy of DP if for all $k\in \{0,\dots,n-1\}$ and for all $x \in \mathbb{R}^N$, the policy $\pi_k(x)$ is a local minimizer of the Q-function $Q^\pi_k(x,\cdot)$, meaning that there exists $\epsilon_k^\ast(x)>0$ such that 
\[ Q_k^{\pi}(x,\pi_k(x)) \leq Q_k^{\pi}(x,\Tilde{u}) ,\quad \forall \Tilde{u} \in B(\pi_k(x),\epsilon_k^\ast(x))\cap A. \]
It is further called a spurious locally minimum control policy of DP if $J_{0}^{\pi}(x_0)>J_0(x_0)$.
\end{definition}

In the following subsections, we will show that in the deterministic problem, both approaches capture the same local solutions under mild assumptions. 

\subsection{Local minimizers: From DP to one-shot optimization} \label{sec:locminDPOS}

It is well-known that the input sequence induced by a globally minimal control policy is a global minimizer of the one-shot problem \cite{bertsekas1995dynamic}. In this subsection, we will show that the input sequence induced by a spurious locally minimum control policy of DP also corresponds to a spurious local minimizer of the one-shot problem if some mild conditions are satisfied.

\begin{theorem} \label{thm: DP to one-shot epsilon PD}
Assume that $A$ is convex. Consider a (spurious) locally minimum control policy $\pi=(\pi_0,\dots,\pi_{n-1})$, and let the corresponding input and state sequences associated with the initial state $x_0$ be denoted as $(u_0^*,\dots,u_{n-1}^*)$ and $(x_0^*,\dots,x_n^*)$. If $\pi_k$ is twice continuously differentiable in a neighborhood of $x_k^*$ and
$\nabla^2_uQ_k^{\pi}(x_k^*,u_k^*) \succ 0$ for all $k\in\{0,\dots,n-1\}$,
then $(u_0^\ast,\dots,u^\ast_{n-1})$ is also a (spurious) local minimizer of the one-shot problem. 
\end{theorem}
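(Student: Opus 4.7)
The strategy is to use the telescoping (performance-difference) identity
\[
C(x_0;\tilde u_0,\ldots,\tilde u_{n-1}) - J_0^{\pi}(x_0) = \sum_{k=0}^{n-1}\bigl[Q_k^{\pi}(\tilde x_k,\tilde u_k) - J_k^{\pi}(\tilde x_k)\bigr],
\]
where $\tilde x_k$ denotes the state reached at time $k$ under the perturbed inputs $\tilde u_0,\ldots,\tilde u_{k-1}$ from $x_0$. This identity follows by a direct backward induction from Definition~\ref{def: Q and cost-to-go}, using $C(x;u_k,\ldots,u_{n-1})=c_k(x,u_k)+C(f_k(x,u_k);u_{k+1},\ldots,u_{n-1})$ together with $J_k^\pi=Q_k^\pi(\cdot,\pi_k(\cdot))$. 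The same recursion gives $C(x_0;u_0^\ast,\ldots,u_{n-1}^\ast)=J_0^{\pi}(x_0)$, so proving local optimality reduces to showing every summand on the right-hand side is nonnegative whenever the perturbations lie in a common ball $B(u_k^\ast,\epsilon)$; the spurious case then comes for free, since a suboptimal policy cost transfers directly into a suboptimal one-shot cost.

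I would then establish a uniform strong-convexity neighborhood around each $(x_k^\ast,u_k^\ast)$. Because $f_k,c_k$ are $C^2$ and $\pi_k$ is $C^2$ near $x_k^\ast$, a backward induction over $k$ shows $J_k^\pi$ and $Q_k^\pi$ are $C^2$ near $(x_k^\ast,u_k^\ast)$. Continuity of $\nabla_u^2 Q_k^\pi$ together with the hypothesis $\nabla_u^2 Q_k^\pi(x_k^\ast,u_k^\ast)\succ 0$ then yields open sets $V_k\ni x_k^\ast$ and $W_k\ni u_k^\ast$ such that $Q_k^\pi(x,\cdot)$ is strongly convex on $W_k$ for every $x\in V_k$. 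Since $A$ is convex, so is $W_k\cap A$, and on this set any local minimizer of the strongly convex function $Q_k^\pi(x,\cdot)$ is the unique global minimizer. After shrinking $V_k$ if necessary so that $\pi_k(x)\in W_k$ for all $x\in V_k$ (using continuity of $\pi_k$), I conclude $Q_k^\pi(x,u)\ge Q_k^\pi(x,\pi_k(x))=J_k^\pi(x)$ for every $x\in V_k$ and $u\in W_k\cap A$.

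The final step selects a single radius $\epsilon>0$ that simultaneously keeps $\tilde u_k\in W_k$ and $\tilde x_k\in V_k$ for every $k$. Since $\tilde x_0=x_0=x_0^\ast\in V_0$, continuity of each $f_k$ allows a finite forward induction that shrinks $\epsilon$ so that $\tilde u_i\in B(u_i^\ast,\epsilon)\subset W_i$ for $i<k$ forces $\tilde x_k\in V_k$. With this $\epsilon$, every summand in the telescoping identity is nonnegative by the previous paragraph, giving exactly the local-optimality inequality of Definition~\ref{def: local min OS}.

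The main obstacle I anticipate is not any single step in isolation but the mismatch between the pointwise nature of Definition~\ref{def: local min DP}, whose neighborhoods $\epsilon_k^\ast(x)$ could in principle collapse as $x$ varies along the trajectory, and Definition~\ref{def: local min OS}, which demands a single $\epsilon$ valid for all time steps jointly. The positive-definiteness hypothesis is precisely what bridges this gap by upgrading pointwise local minimality into uniform strict convexity on a fixed open set, which is what allows the common radius $\epsilon$ to be constructed and makes the telescoping argument close.
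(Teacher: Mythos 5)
Your proposal is correct and follows essentially the same route as the paper: the paper likewise uses continuity of $\nabla^2_u Q_k^{\pi}$ (enabled by the $C^2$ policy assumption propagating through $J_{k+1}^\pi$) to build nested neighborhoods $B(x_k^\ast,\delta_k)$ and $B(u_k^\ast,\epsilon_k)$ on which $Q_k^\pi(x,\cdot)$ is convex on the convex set $B(u_k^\ast,\epsilon_k)\cap A$, upgrades the pointwise local minimality of $\pi_k(\tilde x_k)$ to global minimality there, and sums the resulting stagewise inequalities $Q_k^\pi(\tilde x_k,\tilde u_k)\ge J_k^\pi(\tilde x_k)$ into the telescoping bound $C(x_0;\tilde u_0,\dots,\tilde u_{n-1})\ge J_0^\pi(x_0)=C(x_0;u_0^\ast,\dots,u_{n-1}^\ast)$. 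The only cosmetic difference is that the paper constructs the radii by a backward induction from $k=n$ rather than your forward shrinking pass, which is immaterial.
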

\begin{proof}
First, we will use induction to find positive numbers $\delta_0,\dots,\delta_n$ and $\epsilon_0,\dots,\epsilon_{n-1}$ such that
\begin{align}
&\nabla^2_uQ_k^{\pi}(x,u) \succ 0, \label{eq:cond1} \\
&\pi_k(x) \in B(u_k^\ast,\epsilon_k), \label{eq:cond2}\\
&f_{k}(x,u) \in B(x_{k+1}^\ast,\delta_{k+1}), \label{eq:cond3}
\end{align}
for every $x \in B(x_k^\ast,\delta_k)$, $u \in B(u_k^\ast,\epsilon_k) \cap A$, and $k\in\{0,\dots,n-1\}$. At the base step $k=n$, we choose an arbitrary $\delta_n>0$. At the induction step, since $f_k$ is continuous and $\nabla^2_uQ^\pi_k$ is continuous at $(x_k^*,u_k^*)$, there exist $\delta_k>0$ and $\epsilon_k>0$ such that both \eqref{eq:cond1} and \eqref{eq:cond3} are satisfied for all $x \in B(x_k^\ast,\delta_k)$ and $u \in B(u_k^\ast,\epsilon_k) \cap A$. Moreover, as $\pi_k$ is continuous at $x_k^*$, \eqref{eq:cond2} will be satisfied by further reducing $\delta_k$.

For every $(\Tilde{u}_0,\dots,\Tilde{u}_{n-1})$ with $\Tilde{u}_k \in B(u_k^\ast,\epsilon_k) \cap A$, let $(\tilde x_0,\dots,\tilde x_n)$ be its corresponding state sequence (note that $\tilde x_0=x_0$). It follows from \eqref{eq:cond3} that
$\Tilde{x}_k \in B(x_k^\ast,\delta_k)$ for all $k\in\{0,\dots,n-1\},$
which together with \eqref{eq:cond2} implies that
\[
\pi_k(\Tilde{x}_k) \in B(u_k^\ast,\epsilon_k), \quad \forall k\in\{0,\dots,n-1\}.
\]
In light of \eqref{eq:cond1}, $Q_k^{\pi}(\Tilde{x}_k,\cdot)$ is a convex function on the convex set $B(u_k^\ast,\epsilon_k) \cap A$. Because $\pi_k(\Tilde{x}_k) \in B(u_k^\ast,\epsilon_k) \cap A$ is a local minimizer of the function $Q_k^{\pi}(\Tilde{x}_k,\cdot)$, it must be a global minimizer of this function over $B(u_k^\ast,\epsilon_k) \cap A$. Thus, for $k\in\{0,\dots,n-1\}$, we have
\begin{align*}
c_k(\Tilde{x}_k,\Tilde{u}_k)+J^{\pi}_{k+1}(\Tilde{x}_{k+1})=Q_k^{\pi}(\Tilde{x}_k,\Tilde{u}_k)&\geq Q_k^{\pi}(\Tilde{x}_k,\pi_k(\Tilde{x}_k)) \\
&=J^{\pi}_k(\Tilde{x}_k).
\end{align*}
By adding all of the above inequalities, one can obtain
\[
C(x_0;\Tilde{u}_0,\dots,\Tilde{u}_{n-1}) \geq J^{\pi}_0(x_0)=C(x_0;u_0^\ast,\dots,u_{n-1}^\ast),
\]
which shows that $(u_0^\ast,\dots,u_{n-1}^\ast)$ is a local minimizer of the one-shot problem. 
Also, if $\pi$ is a spurious locally minimum control policy of DP, namely, $J_0^\pi(x_0)>J_0(x_0^*)$, then 
\[
C(x_0;u_0^\ast,\dots,u_{n-1}^\ast)=J_0^\pi(x_0)>J_0(x_0).
\]
As a result,  $(u_0^\ast,\dots,u_{n-1}^\ast)$ is also a spurious local minimizer of the one-shot problem.
\end{proof}

\begin{remark}
By taking the contrapositive, one can immediately conclude that the DP method cannot produce any spurious locally minimum control policies that satisfy the regularity conditions in Theorem \ref{thm: DP to one-shot epsilon PD} as long as the one-shot problem has no spurious local minima.
\end{remark}

\subsection{Stationary points: From DP to one-shot optimization} \label{sec:stationaryDPOS}

In this subsection, we will show that the induced controlled input of a locally minimum control policy of DP corresponds to a stationary point of the one-shot problem, under some conditions milder than the assumptions of Theorem \ref{thm: DP to one-shot epsilon PD}.

\begin{definition}\label{def: first order necessary condition}
Given a set ${S}$ and a continuously differentiable function $g$, a
point $s^* \in S$ is said to be a stationary point of the optimization problem $\min_{s\in {S}} g(s)$ if
\begin{align*}
-\nabla_{s}g(s^*) \in \mathcal{N}_{{S}}(s^*),
\end{align*}
where 
$\mathcal{N}_{{S}}(s^*)$ denotes the normal cone of the set ${S}$ at the point $s^*$ \cite{rockafellar2009variational}.
\end{definition}
We branch off into two specific notions of stationarity below.

\begin{definition} [Stationary point]\label{def: stationary point of the one-shot optimization}
    A vector of control inputs $(u_0^*,\dots,u_{n-1}^*)$ is said to be a stationary point of the one-shot optimization if for all $k \in \{0,\dots, n-1 \}$, it holds that $-\nabla_{u_k} C(x_0;u_0^*,\dots,u_{n-1}^*) \in \mathcal{N}_{A}(u_k^*)$.
\end{definition}

\begin{definition} [Stationary control policy] \label{def: stationary control policy of DP}
    A control policy $\pi=(\pi_0,\dots,\pi_{n-1})$ is said to be a stationary control policy of DP if for all $k \in \{0,\dots,n-1\}$ and for all $x \in \mathbb{R}^N$, it holds that $-\nabla_{u} Q_k^{\pi} (x, \pi_k (x)) \in \mathcal{N}_{A}(\pi_k (x))$.
\end{definition}

Now, we will prove that a stationary control policy (which involves a locally minimum control policy) implies a stationary point of the one-shot optimization under mild assumptions.
Let $\mathbf D^\pi_k(x)$ be the Jacobian matrix of $\pi_k(\cdot)$ at point $x$,  $\mathbf D^{f,x}_k(x,u)$ be the Jacobian matrix of the function $f_k(\cdot,u)$ at point $x$ while viewing $u$ as a constant, and $\mathbf D^{f,u}_k(x,u)$ be the Jacobian matrix of $f_k(x,\cdot)$ at point $u$ while viewing $x$ as a constant.

\begin{theorem} \label{thm: stationary}
Consider a stationary control policy $\pi=(\pi_0,\dots,\pi_{n-1})$, and let the associated input and state sequences with the initial state $x_0$ be denoted as $(u_0^*,\dots,u_{n-1}^*)$ and $(x_0^*,\dots,x_n^*)$. If for every $k\in\{0,\dots,n-1\}$:
\begin{enumerate}
\setlength{\itemindent}{-0.5em}
\item $\pi_k$ is continuously differentiable in a neighborhood of $x^*_k$,
\item either $\pi_k(x_k^*)$ is in the interior of $A$ or $\mathbf D^\pi_k(x_k^*)=0$,
\end{enumerate}
then $(u_0^*,\dots,u_{n-1}^*)$ is a stationary point of the one-shot optimization.
\end{theorem}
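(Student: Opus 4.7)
My plan is to carry out a backward/adjoint analysis along the nominal trajectory $(x_0^\ast,u_0^\ast,\dots,x_n^\ast)$, showing that the gradient of the one-shot cost with respect to each $u_k$ coincides with $\nabla_u Q_k^\pi(x_k^\ast,u_k^\ast)$, after which the stationarity of $\pi$ transfers directly to stationarity of the one-shot problem. The crucial lemma to establish first is the clean costate recursion
\begin{equation*}
\nabla J_j^\pi(x_j^\ast)=\nabla_x c_j(x_j^\ast,u_j^\ast)+\mathbf D^{f,x}_j(x_j^\ast,u_j^\ast)^\top \nabla J_{j+1}^\pi(x_{j+1}^\ast),
\end{equation*}
valid for $j=n-1,\dots,0$, with terminal condition $\nabla J_n^\pi(x_n^\ast)=\nabla c_n(x_n^\ast)$.

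To derive this recursion, I will differentiate the identity $J_j^\pi(x)=c_j(x,\pi_j(x))+J_{j+1}^\pi(f_j(x,\pi_j(x)))$ at $x=x_j^\ast$ using assumption~(1) (so that $\pi_j$, and hence $J_j^\pi$ by backward induction, is $C^1$ near $x_j^\ast$). The chain rule produces an extra ``policy feedback'' term of the form $\mathbf D^\pi_j(x_j^\ast)^\top\!\left[\nabla_u c_j(x_j^\ast,u_j^\ast)+\mathbf D^{f,u}_j(x_j^\ast,u_j^\ast)^\top\nabla J_{j+1}^\pi(x_{j+1}^\ast)\right]=\mathbf D^\pi_j(x_j^\ast)^\top \nabla_u Q_j^\pi(x_j^\ast,u_j^\ast)$. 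Assumption~(2) kills this term in either case: if $\pi_j(x_j^\ast)$ lies in the interior of $A$, then $\mathcal N_A(u_j^\ast)=\{0\}$ forces $\nabla_u Q_j^\pi(x_j^\ast,u_j^\ast)=0$ via stationarity; if $\mathbf D^\pi_j(x_j^\ast)=0$, the term vanishes trivially. This yields the clean recursion.

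On the one-shot side, I will apply the standard adjoint sensitivity calculation to $C(x_0;u_0,\dots,u_{n-1})$. Treating each $u_\ell$ as an independent variable, $x_{j+1}$ depends on $u_k$ only through $x_j$ for $j\ge k+1$, so the chain rule gives
\begin{equation*}
\nabla_{u_k} C(x_0;u_0^\ast,\dots,u_{n-1}^\ast)=\nabla_u c_k(x_k^\ast,u_k^\ast)+\mathbf D^{f,u}_k(x_k^\ast,u_k^\ast)^\top q_{k+1},
\end{equation*}
where the one-shot costate satisfies $q_n=\nabla c_n(x_n^\ast)$ and $q_j=\nabla_x c_j(x_j^\ast,u_j^\ast)+\mathbf D^{f,x}_j(x_j^\ast,u_j^\ast)^\top q_{j+1}$. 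A one-line backward induction matches the two recursions and their terminal conditions, so $q_j=\nabla J_j^\pi(x_j^\ast)$ for all $j\ge k+1$. Consequently $\nabla_{u_k} C(x_0;u_0^\ast,\dots,u_{n-1}^\ast)=\nabla_u Q_k^\pi(x_k^\ast,u_k^\ast)$, and the assumed inclusion $-\nabla_u Q_k^\pi(x_k^\ast,u_k^\ast)\in\mathcal N_A(u_k^\ast)$ delivers the desired stationarity of the one-shot problem for every $k$.

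The main obstacle is the handling of the ``policy feedback'' cross term, which is exactly why the dichotomy in assumption~(2) is needed; once that term is purged, the whole argument reduces to matching two adjoint recursions, which is routine. A minor technical point worth being careful about is verifying backward-inductively that $J_j^\pi$ is differentiable at $x_j^\ast$ (not just at points where $\pi_j$ is smooth globally), so that the chain rule applications are legitimate — this follows because the composition $x\mapsto f_j(x,\pi_j(x))$ maps a neighborhood of $x_j^\ast$ into a neighborhood of $x_{j+1}^\ast$ on which $J_{j+1}^\pi$ is $C^1$ by the induction hypothesis.
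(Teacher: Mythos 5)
Your proposal is correct and follows essentially the same route as the paper's proof: both arguments establish by backward induction that $\nabla_x J^\pi_k(x_k^\ast)$ equals the one-shot costate $\nabla_x C(x_k^\ast;u_k^\ast,\dots,u_{n-1}^\ast)$, using assumption (2) to annihilate the policy-feedback term $\mathbf D^\pi_k(x_k^\ast)^\top\nabla_u Q^\pi_k(x_k^\ast,u_k^\ast)$, and then conclude $\nabla_{u_k}C=\nabla_u Q^\pi_k(x_k^\ast,u_k^\ast)$. Your explicit introduction of the costate $q_j$ and the remark on the backward-inductive differentiability of $J_j^\pi$ near $x_j^\ast$ are only cosmetic refinements of the same argument.
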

\begin{proof}
 First, we will apply induction to prove that
\begin{equation}\label{eq:grad}
\nabla_xJ^\pi_k(x_k^*)=\nabla_xC(x_k^*;u_k^*,\dots,u_{n-1}^*)
\end{equation}
holds for $k\in\{0,\dots,n\}$. The base step $k=n$ is obvious. For the induction step, observe that
\begin{align*}
&\nabla_xQ^\pi_k(x,u)=\nabla_xc_k(x,u)+\mathbf D^{f,x}_k(x,u)^T \nabla_xJ^\pi_{k+1}(f_k(x,u)), \\
&\nabla_xJ^\pi_k(x)=\nabla_x[Q^\pi_k(x,\pi_k(x))]
\\&\hspace{13.7mm} = \nabla_xQ^\pi_k(x,\pi_k(x))+\mathbf D^\pi_k(x)^T \nabla_uQ^\pi_k(x,\pi_k(x)).
\end{align*}
Therefore,
\begin{equation}\label{eq:grad1}
\begin{aligned}
\nabla_xJ^\pi_k(x_k^*)=\nabla_xc_k(x_k^*,u_k^*) &+ \mathbf D^{f,x}_k(x_k^*,u_k^*)^T \nabla_xJ^\pi_{k+1}(x_{k+1}^*)\\ &+ \mathbf D^\pi_k(x_k^*)^T \nabla_uQ^\pi_k(x_k^*,u_k^*).
\end{aligned}
\end{equation}
If $u_k^*$ is in the interior of $A$, we have $\nabla_u Q^\pi_k(x_k^*,u_k^*)=0$ by stationarity. Otherwise, by the assumption, we have $\mathbf D^\pi_k(x_k^*)=0$.  In either case, the last term of \eqref{eq:grad1} is zero. Meanwhile,
\begin{align*}
&\nabla_xC(x;u_k^*,\dots,u_{n-1}^*) \\ &= \nabla_xc_k(x,u_k^*)+\nabla_x[C(f_k(x,u_k^*);u_{k+1}^*,\dots,u_{n-1}^*)] \\
&=\nabla_xc_k(x,u_k^*)+\mathbf D^{f,x}_k(x,u_k^*)^T \nabla_xC(f_k(x,u_k^*);u_{k+1}^*,\dots,u_{n-1}^*).
\end{align*}
Now, \eqref{eq:grad} can be obtained by taking $x=x_k^*$ in the above equality and then combining it with the induction hypothesis and \eqref{eq:grad1}.
Finally, for $k\in \{0,\dots,n-1\}$, one can write
\begin{align*}
&\nabla_{u_k}C(x_0;u_0^*,\dots,u_{n-1}^*)\\&=\nabla_uc_k(x_k^*,u_k^*) + \mathbf D^{f,u}_k(x_k^*,u_k^*)^T \nabla_xC(x_{k+1}^*;u_{k+1}^*,\dots,u_{n-1}^*) \\
&=\nabla_uc_k(x_k^*,u_k^*)+\mathbf D^{f,u}_k(x_k^*,u_k^*)^T \nabla_xJ^\pi_{k+1}(x_{k+1}^*) \\&= \nabla_uQ^\pi_k(x_k^*,u_k^*),
\end{align*}
in which the second equality is due to \eqref{eq:grad}. Since $u_k^*$ is a stationary point of $Q^\pi_k(x_k^*,\cdot)$, $-\nabla_u Q^\pi_k(x_k^*,u_k^*) \in \mathcal{N}_{A}(u_k^*)$. Thus, $-\nabla_{u_k}C(x_0;u_0^*,\dots,u_{n-1}^*) \in \mathcal{N}_{A}(u_k^*)$, which proves that $(u_0^*,\dots,u_{n-1}^*)$ is a stationary point of the one-shot optimization.
\end{proof}

\subsection{Local minimizers: From one-shot optimization to DP} \label{sec:locminOSDP}

In this subsection, we will show that each strict local minimizer of the one-shot problem is induced by a locally minimum control policy $\pi$ of DP. Before proving the theorem, we first provide the following useful lemma.

\begin{lemma}\label{prop: strict local min}
Given a function $g: \mathbb{R}^N \times A \to \mathbb{R}$, a point $x^* \in \mathbb{R}^N$ and a number $\epsilon>0$, if $u^* \in A$ is a strict local minimizer of the function $g(x^*,\cdot)$ and $g$ is continuous in a neighborhood of $(x^*,u^*)$, then there exist $\delta>0$ and a function $h: B(x^*,\delta) \to A$ such that $h(x^*)=u^*$ and that the following statements hold for all $x \in B(x^*,\delta)$:
\begin{enumerate}
\item $h(x)$ is a local minimizer of $g(x,\cdot)$.
\item $h(x) \in B(u^*,\epsilon)$.
\item The function $g(x,h(x))$ is continuous at $x$.
\end{enumerate}
\end{lemma}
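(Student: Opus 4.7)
The plan is to define $h(x)$ as a global minimizer of $g(x,\cdot)$ over a small closed ball around $u^*$ intersected with $A$, and then to verify the three properties by choosing the radius of the ball carefully. The main technical obstacle is preventing the minimizer from drifting to the outer spherical boundary as $x$ moves away from $x^*$, because only \emph{strict} local minimality (with no second-order information) is assumed at $u^*$.

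First, I would fix a radius $r\in(0,\epsilon)$ small enough that (i) $\bar{B}(u^*,r)$ lies in the $u$-slice of the neighborhood on which $g$ is jointly continuous, and (ii) $u^*$ is the \emph{unique} minimizer of $g(x^*,\cdot)$ on the compact set $K:=\bar{B}(u^*,r)\cap A$. Such an $r$ exists because strict local minimality of $u^*$ implies that in some ball $u^*$ is the only point achieving the minimum, and $A$ (hence $K$) is compact. Then, for each $x$ in a neighborhood of $x^*$, I would define $h(x)$ to be any global minimizer of $g(x,\cdot)$ over $K$; existence follows from the Weierstrass theorem, and uniqueness of the minimizer at $x=x^*$ forces $h(x^*)=u^*$.

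The crux of the argument is to find $\delta>0$ such that, for every $x\in B(x^*,\delta)$, the minimizer $h(x)$ lies in the open ball $B(u^*,r)\cap A$ rather than on the outer sphere $\partial B(u^*,r)\cap A$. If the latter set is empty, the claim is immediate; otherwise it is compact and $\alpha:=\min_{u\in \partial B(u^*,r)\cap A}g(x^*,u)>g(x^*,u^*)$ by strict minimality. A standard parametric-value continuity argument (Berge's maximum theorem, or an $\epsilon$--$\delta$ computation exploiting uniform continuity of $g$ on a compact product set) then yields that both $F(x):=\min_{u\in\partial B(u^*,r)\cap A}g(x,u)$ and $x\mapsto g(x,u^*)$ are continuous at $x^*$. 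Hence, for $\delta$ small enough and $x\in B(x^*,\delta)$, I can guarantee the strict inequality $g(x,u^*)<(\alpha+g(x^*,u^*))/2<g(x,u)$ for every $u\in\partial B(u^*,r)\cap A$. Since $u^*\in K$ strictly beats every point on the outer sphere, any minimizer $h(x)$ of $g(x,\cdot)$ over $K$ must lie in $B(u^*,r)\cap A$.

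Finally, I would verify the three conclusions. Property (2) is immediate from $r<\epsilon$. For property (1), since $h(x)\in B(u^*,r)\cap A$ globally minimizes $g(x,\cdot)$ over $K$, a sufficiently small ball $B(h(x),\eta)\subseteq B(u^*,r)$ witnesses local minimality of $h(x)$ for $g(x,\cdot)$ on $A$. For property (3), the value function $v(x):=\min_{u\in K}g(x,u)$ is continuous on $B(x^*,\delta)$ by the same Berge-type argument, and since $g(x,h(x))=v(x)$ by construction, the map $x\mapsto g(x,h(x))$ is continuous at every point of $B(x^*,\delta)$.
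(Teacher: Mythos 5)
Your proposal is correct. The paper itself only cites the conference version for this proof, but your argument is exactly the standard one used there: define $h(x)$ as a selection of $\arg\min_{u\in \bar B(u^*,r)\cap A} g(x,u)$, use strict local minimality of $u^*$ together with continuity of $x\mapsto g(x,u^*)$ and of the value function over the outer sphere to keep the minimizer off $\partial B(u^*,r)$ for $x$ near $x^*$, and conclude local minimality, containment in $B(u^*,\epsilon)$, and continuity of $g(x,h(x))$ from the Berge-type continuity of $\min_{u\in K}g(x,u)$. All three conclusions are verified and the key obstacle (preventing the argmin from drifting to the boundary of the ball) is handled correctly.
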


\begin{proof}
The proof is given in \cite{ding2021analysis} (see Lemma 1).
\end{proof}

\begin{theorem} \label{thm: one-shot to DP}
If the one-shot problem has a (spurious) strict local minimizer $(u_0^*,\dots,u_{n-1}^*)$, then there exists a (spurious) locally minimum control policy $\pi$ of DP with the property that  $\pi_k(x_k^*)=u_k^*$ for all $k\in\{0,\dots,n-1\}$, where $(x_0^*,\dots,x_n^*)$ is the state sequence associated with the (spurious) solution of the one-shot problem.
\end{theorem}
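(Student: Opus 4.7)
The plan is to construct $\pi$ by backward induction from $k=n-1$ down to $k=0$, invoking Lemma \ref{prop: strict local min} at each stage. Fix $\epsilon^*>0$ to be a radius witnessing the strict local minimality of $(u_0^*,\dots,u_{n-1}^*)$ in the one-shot problem. At each stage $j$ the inductive invariants I want to maintain are (i) $\pi_j(x_j^*)=u_j^*$; (ii) $\pi_j(x)\in B(u_j^*,\epsilon^*)$ for $x\in B(x_j^*,\delta_j)$; (iii) $f_j(x,\pi_j(x))\in B(x_{j+1}^*,\delta_{j+1})$ for $x\in B(x_j^*,\delta_j)$; (iv) $J_j^\pi$ is continuous on $B(x_j^*,\delta_j)$; and (v) $\pi_j(x)$ is a local minimizer of $Q_j^\pi(x,\cdot)$ for every $x\in\mathbb{R}^N$. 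Outside $B(x_j^*,\delta_j)$, I simply set $\pi_j(x)$ to a global minimizer of $Q_j^\pi(x,\cdot)$ over the compact set $A$, which exists by continuity; this handles (v) away from $x_j^*$ without affecting the other invariants.

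For the base case $k=n-1$, $Q_{n-1}^\pi$ does not depend on $\pi$, and freezing the first $n-1$ one-shot coordinates at their starred values shows that $u_{n-1}^*$ is a strict local minimizer of $Q_{n-1}^\pi(x_{n-1}^*,\cdot)$; Lemma \ref{prop: strict local min} then supplies $\pi_{n-1}$ on a small ball satisfying (i)--(iv). At a generic inductive step $k$, the real content is to upgrade the inductive hypotheses into the strict local minimality of $u_k^*$ for $Q_k^\pi(x_k^*,\cdot)$. I will do this by taking any $\tilde u_k\in A\setminus\{u_k^*\}$ close enough to $u_k^*$ that the perturbed state $\tilde x_{k+1}=f_k(x_k^*,\tilde u_k)$ lands in $B(x_{k+1}^*,\delta_{k+1})$, and then letting the trajectory $\tilde x_{j+1}=f_j(\tilde x_j,\pi_j(\tilde x_j))$ unfold for $j\geq k+1$. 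By (iii) applied repeatedly and (ii), each resulting $\tilde u_j=\pi_j(\tilde x_j)$ stays in $B(u_j^*,\epsilon^*)$. Telescoping the recursion for $J_{k+1}^\pi$ yields
\[\sum_{j=0}^{k-1}c_j(x_j^*,u_j^*)+Q_k^\pi(x_k^*,\tilde u_k)=C(x_0;u_0^*,\dots,u_{k-1}^*,\tilde u_k,\tilde u_{k+1},\dots,\tilde u_{n-1}),\]
together with the analogous identity on the starred side using (i). Because $\tilde u_k\neq u_k^*$, the strict local minimality of the one-shot solution forces $Q_k^\pi(x_k^*,\tilde u_k)>Q_k^\pi(x_k^*,u_k^*)$, as required. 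The continuity hypothesis of Lemma \ref{prop: strict local min} is then satisfied by (iv) at stage $k+1$, so the lemma produces $\pi_k$ on some $B(x_k^*,\delta_k)$, after which a final shrinkage of $\delta_k$ secures (iii) via continuity of $f_k$.

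At $k=0$ the induction terminates; invariant (v) certifies $\pi$ as a locally minimum control policy of DP, invariant (i) gives $\pi_k(x_k^*)=u_k^*$, and the spurious case follows from $J_0^\pi(x_0)=C(x_0;u_0^*,\dots,u_{n-1}^*)>J_0(x_0)$. The main obstacle is the strict-local-minimality upgrade at the inductive step: its success rests on tightly coordinating the radii $\delta_j$ with $\epsilon^*$ so that the entire forward-rolled perturbation remains inside the one-shot $\epsilon^*$-tube where the ambient strict inequality can be invoked, and this tight coupling is precisely what property (2) of Lemma \ref{prop: strict local min} is designed to deliver.
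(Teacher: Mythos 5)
Your construction follows the paper's proof almost line for line: the same backward induction, the same inductive invariants, the same forward-rolling argument that converts strict local minimality of the one-shot solution into strict local minimality of $u_k^*$ for $Q_k^\pi(x_k^*,\cdot)$, and the same invocation of Lemma~\ref{prop: strict local min} followed by an extension of $\pi_k$ outside $B(x_k^*,\delta_k)$ via a global minimizer over $A$. That core argument is correct and is exactly the paper's.

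There is, however, one genuine gap in the extension step. You define $\pi_j(x)$ for $x\notin B(x_j^*,\delta_j)$ as a global minimizer of $Q_j^\pi(x,\cdot)$ over the compact set $A$ and assert that this minimizer "exists by continuity." But $Q_j^\pi(x,u)=c_j(x,u)+J_{j+1}^\pi(f_j(x,u))$, and your invariant (iv) only gives continuity of $J_{j+1}^\pi$ on the ball $B(x_{j+1}^*,\delta_{j+1})$; for $u$ with $f_j(x,u)$ outside that ball, $J_{j+1}^\pi$ is evaluated along the already-constructed policy $\pi_{j+1}$, which is pieced together from a local-minimizer selection near $x_{j+1}^*$ and a global-minimizer selection elsewhere and is in general discontinuous. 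A discontinuous function on a compact set need not attain its infimum, so without more information $\pi_j$ is not well defined off the ball and invariant (v) cannot be certified there. The paper closes this by strengthening the inductive invariant to "$J_k^\pi$ is lower semi-continuous on all of $\mathbb{R}^N$ and continuous on $B(x_k^*,\delta_k)$": lower semi-continuity of $J_{k+1}^\pi$ makes $Q_k^\pi(x,\cdot)$ lower semi-continuous, which suffices for attainment of the minimum on the compact set $A$, and the invariant is then propagated using the Berge maximum theorem (to show $H_k(x)=\min_{u\in A}Q_k^\pi(x,u)$ is lower semi-continuous) together with a separate check at the boundary of $B(x_k^*,\delta_k)$. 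You need to add this global lower semi-continuity to your list of invariants and carry out that propagation; otherwise the argument is complete.
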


\begin{proof}
Let $(u_0^*,\dots,u_{n-1}^*)$ be a strict local minimizer of the one-shot problem. There exists $\epsilon>0$ such that
\begin{equation}\label{eq:strictlocalmin}
C(x_0;u_0^*,\dots,u_{n-1}^*)<C(x_0;u_0,\dots,u_{n-1}),
\end{equation}
for every control sequence $(u_0,\dots,u_{n-1}) \neq (u_0^*,\dots,u_{n-1}^*)$ with the property that $u_i \in B(u_i^*,\epsilon) \cap A$ for $i=0,\dots,n-1$. In what follows, we will prove by a backward induction that there exist policies $\pi_0,\dots,\pi_{n-1}$, positive numbers $\delta_0,\dots,\delta_{n}$, and corresponding cost-to-go functions $J_0^{\pi},\dots,J_n^{\pi}$ such that they jointly satisfy the following properties:
\begin{enumerate}
\item $\pi_k(x_k)$ is a local minimizer of the function $Q^\pi_k(x_k,\cdot)$ for all $x_k \in \mathbb{R}^N$.
\item $\pi_k(x_k^*)=u_k^*$.
\item For all $x_k \in B(x_k^*,\delta_k)$, it holds that
\begin{align*}
\pi_k(x_k) \in B(u_k^*,\epsilon), ~~ f_k(x_k,\pi_k(x_k)) \in B(x_{k+1}^*,\delta_{k+1}).
\end{align*}
\item $J^\pi_k$ is lower semi-continuous on $\mathbb{R}^N$ and continuous on $B(x_k^*,\delta_k)$.
\end{enumerate}

For the base step $k=n$, we choose an arbitrary $\delta_n>0$ and notice that $J^\pi_n(x)=c_n(x)$, implying that $J^\pi_n$ is always continuous.  For $k<n$, assume that $\pi_{k+1},\dots,\pi_{n-1}$ and $\delta_{k+1},\dots,\delta_n$ with the above properties have been found. 

First, by the continuity of $f_k$, there exist $\delta'_k>0$ and $0<\epsilon_k<\epsilon$ such that
\begin{equation}\label{eq:fcont}
f_k(x_k,u_k) \in B(x_{k+1}^*,\delta_{k+1}), \quad \forall (x_k,u_k) \in S_k,
\end{equation}
where $S_k=B(x_k^*,\delta'_k) \times (B(u_k^*,\epsilon_k) \cap A).$

Since $Q^\pi_k(x_k,u_k)=c_k(x_k,u_k)+J^\pi_{k+1}(f_k(x_k,u_k))$ and $J^\pi_{k+1}$ is continuous on $B(x_{k+1}^*,\delta_{k+1})$, $Q^\pi_k$ is continuous on $S_k$. Next, for every $\tilde u_k \in B(u_k^*,\epsilon_k) \cap A$, if we define
\begin{gather*}
\tilde x_{k+1}=f_k(x_k^*,\tilde u_k), \quad \tilde u_{k+1}=\pi_{k+1}(\tilde x_{k+1}), \\
\tilde x_{k+2}=f_{k+1}(\tilde x_{k+1},\tilde u_{k+1}), \quad \tilde u_{k+2}=\pi_{k+2}(\tilde x_{k+2}),  \\
\dots \\
\tilde x_{n-1}=f_{n-2}(\tilde x_{n-2},\tilde u_{n-2}), \quad \tilde u_{n-1}=\pi_{n-1}(\tilde x_{n-1}),
\end{gather*}
by applying \eqref{eq:fcont} and then the third property above repeatedly, we arrive at
\[
\tilde u_i \in B(u_i^*,\epsilon) \cap A, \quad \forall i\in \{k+1,\dots,n-1\}.
\]
When $\tilde u_k \neq u_k^*$, it follows from \eqref{eq:strictlocalmin} and the second property above that
\begin{align*}
Q^\pi_k(x_k^*&,\tilde u_k)=C(x_k^*;\tilde u_k,\dots,\tilde u_{n-1}) 
\\&=C(x_0;u_0^*,\dots,u_{k-1}^*,\tilde u_k,\dots,\tilde u_{n-1})-\sum_{i=0}^{k-1}c_i(x_i^*,u_i^*) \\
&>C(x_0;u_0^*,\dots,u_{n-1}^*)-\sum_{i=0}^{k-1}c_i(x_i^*,u_i^*) \\ &=C(x_k^*;u_k^*,\dots,u_{n-1}^*)=Q^\pi_k(x_k^*,u_k^*).
\end{align*}
As a result, $u_k^*$ is a strict local minimizer of $Q^\pi_k(x_k^*,\cdot)$. Applying Lemma~\ref{prop: strict local min} to the function $Q^\pi_k$ with $x_k^*$ and $\epsilon_k$, one can find $0<\delta_k<\delta_k'$ and a function $h_k:B(x_k^*,\delta_k) \to A$ such that $h_k(x_k^*)=u_k^*$ and that the following statements hold for every $x_k \in B(x_k^*,\delta_k)$:
\begin{enumerate}
\item $h_k(x_k)$ is a local minimizer of $Q^\pi_k(x_k,\cdot)$.
\item $h_k(x_k) \in  B(u_k^*,\epsilon_k) \subseteq B(u_k^*,\epsilon)$, which together with \eqref{eq:fcont} implies that \\ $f_k(x_k,h_k(x_k)) \in B(x_{k+1}^*,\delta_{k+1})$.
\item The function $Q^\pi_k(x_k,h_k(x_k))$ is continuous at $x_k$.
\end{enumerate}

Let $\pi_k$ be the extension of the function $h_k$ by setting $\pi_k(x_k)$ to be any global minimizer of the lower semi-continuous function $Q^\pi_k(x_k,\cdot)$ over the compact set $A$ if $x_k \notin B(x_k^*,\delta_k)$. 
Obviously, $\pi_k$ satisfies the first three properties. To verify the last property, observe that 
\[
J^\pi_k(x_k)=\begin{dcases*}
Q^\pi_k(x_k,h_k(x_k)), & if $x_k \in B(x_k^*,\delta_k)$, \\
H_k(x_k), & otherwise,
\end{dcases*}
\]
\noindent in which $H_k(x_k)=\min_{u \in A}Q^\pi_k(x_k,u)$, and therefore $J^\pi_k$ is continuous on the set $B(x_k^*,\delta_k)$. In addition, note that $J_{k+1}^\pi$ and thus $Q^\pi_k$ is lower semi-continuous, while $A$ is compact. Hence, it follows from the Berge maximum theorem \cite{berge1997topological} that $H_k$ is also lower semi-continuous on $\mathbb{R}^N$, which implies that $J^\pi_k$ is lower semi-continuous on  $\mathbb{R}^N \setminus\bar B(x_k^*,\delta_k)$.
For every point $\bar x_k$ on the boundary of $B(x_k^*,\delta_k)$, since $H_k$ is lower semi-continuous at $\bar x_k$, for every $\bar\epsilon>0$ there exists $\bar\delta>0$ such that
\[
J^\pi_k(x_k) \geq H_k(x_k)>H_k(\bar x_k)-\bar\epsilon=J^\pi_k(\bar x_k)-\bar\epsilon
\]
holds for all $x_k \in B(\bar x_k,\bar\delta)$. Therefore, $J^\pi_k$ is also lower semi-continuous at $\bar x_k$.

By the first and second properties, $\pi=(\pi_0,\dots,\pi_{n-1})$ is a locally minimum control policy of DP. Also, if $(u_0^*,\dots,u_{n-1}^*)$ is a spurious local minimizer of the one-shot problem, then 
$J^\pi_0(x_0)=C(x_0;u_0^*,\dots,u_{n-1}^*)>J_0(x_0),$
which implies that $\pi$ is also a spurious locally minimum control policy of DP.
\end{proof}

\begin{remark}
Theorem \ref{thm: one-shot to DP} shows that, under mild conditions, DP is a reformulation from a single one-shot optimization problem to a sequence of optimization problems that preserves local minimizers. 
By taking the contrapositive of Theorem \ref{thm: one-shot to DP}, one can immediately obtain the result that the one-shot problem has no spurious strict local minimizers as long as DP has no spurious locally minimum control policies.
\end{remark}



\begin{remark}\label{continuousinfinite}
Pontryagin's minimum principle implies that a global minimizer of the one-shot problem achieves a global optimality of each DP problem minimizing Hamiltonian. One can restrict the domain to apply the principle to a local minimizer of the one-shot problem; it achieves a local optimality of $u_k^*$ for each DP problem if $J_k^\pi$ are evaluated at the associated state $x_k^\star$. Theorem \ref{thm: one-shot to DP} is a generalization of Pontryagin's principle in the sense that from each local minimizer of the one-shot problem, we obtain a locally minimum control policy instead of $u_k^*$; \textit{i.e.}, a set of \textit{functions} that achieves a local optimality of every DP problem for all $x\in \mathbb{R}^N$. We further require a ``strict" local minimizer of the one-shot problem to ensure that a local optimality is obtained at all points in the neighborhood of $x_k^\star.$
Meanwhile, one can now anticipate that Theorem \ref{thm: DP to one-shot epsilon PD} would correspond to the converse of Pontryagin's principle. The principle provides sufficient conditions for the one-shot problem if we have a convex action space, convex cost functions, and linear dynamics \cite{bertsekas1995dynamic, bressan2007intro}. 
In contrast, Theorem \ref{thm: DP to one-shot epsilon PD} assumes a convex action space but still has general nonlinear transition dynamics. Theorem \ref{thm: DP to one-shot epsilon PD} instead requires a locally ``strictly" convex Q-functions (Hamiltonian) for each DP sub-problem. The connection between our results and Pontryagin's principle suggests the possibility of the extension of the above results to the continuous-time setting.
\end{remark}

\begin{remark}\label{allresult}
In fact, all results of our paper can be naturally generalized to the continuous-time setting, but the analysis is left as future work due to space restrictions. To outline the pathway for generalization, note that the Hamilton-Jacobi-Bellman equation for a given continuous-time system can be obtained from developing a discrete-time model, obtaining the Bellman equation for that model, and then closing the gap between the continuous-time and discrete-time system via taking a limit \cite{bertsekas1995dynamic}. 
Moreover, the infinite-horizon case is also treated in \cite{bertsekas1995dynamic} as the stationary limit of a finite-horizon problem, which again allows us to extend our results to the infinite-horizon case.
\end{remark}

Considering Theorems \ref{thm: DP to one-shot epsilon PD} and \ref{thm: one-shot to DP} altogether, one can conclude that under mild conditions, each local minimizer of the one-shot optimization corresponds to some control input induced by a locally minimum control policy, and vice versa.

\subsection{Numerical Examples} \label{sec: detexamples}
To effectively demonstrate the results of this section via visualization, we will provide two low-dimensional examples. 

\textit{Example 1}: Consider an optimal control problem {with the control constraint $A=[-10,10]$ and}
\begin{align*}
  & c_0(x,u)=0, \\
  &c_1(x,u)= \frac{1}{4}u^4-\frac{3x+4}{3}u^3 +\frac{3x^2+8x+3}{2}u^2\\
  &\hspace{1.4cm}-x(x+1)(x+3)u+\exp{(x^4)},\\
  & c_2(x)=0,\ f_0(x,u)=x+u, \ f_1(x,u)=x+u.
\end{align*}
At the initial state $x_0=0$, the one-shot problem is written as 
\begin{align*}
    \min_{u_0 \in A,u_1 \in A} \ & \Big\{\frac{1}{4}u_1^4-\frac{3u_0+4}{3}u_1^3 +\frac{3u_0^2+8u_0+3}{2}u_1^2\\
  &-u_0(u_0+1)(u_0+3)u_1+\exp{(u_0^4)} \Big\}.
\end{align*}
This one-shot optimization problem has 3 spurious local minimizers $(-0.523,-0.523), (-0.523,2.477), (0.938,0.938)$ and the globally optimal minimizer $(0.938,3.938)$. The landscape of this objective function is shown in Fig.~\ref{fig: no bifurcate landscape}.

\begin{figure}[t]
\centering \subfloat[Example 1]{\label{fig: no bifurcate landscape}\includegraphics[width=43mm]{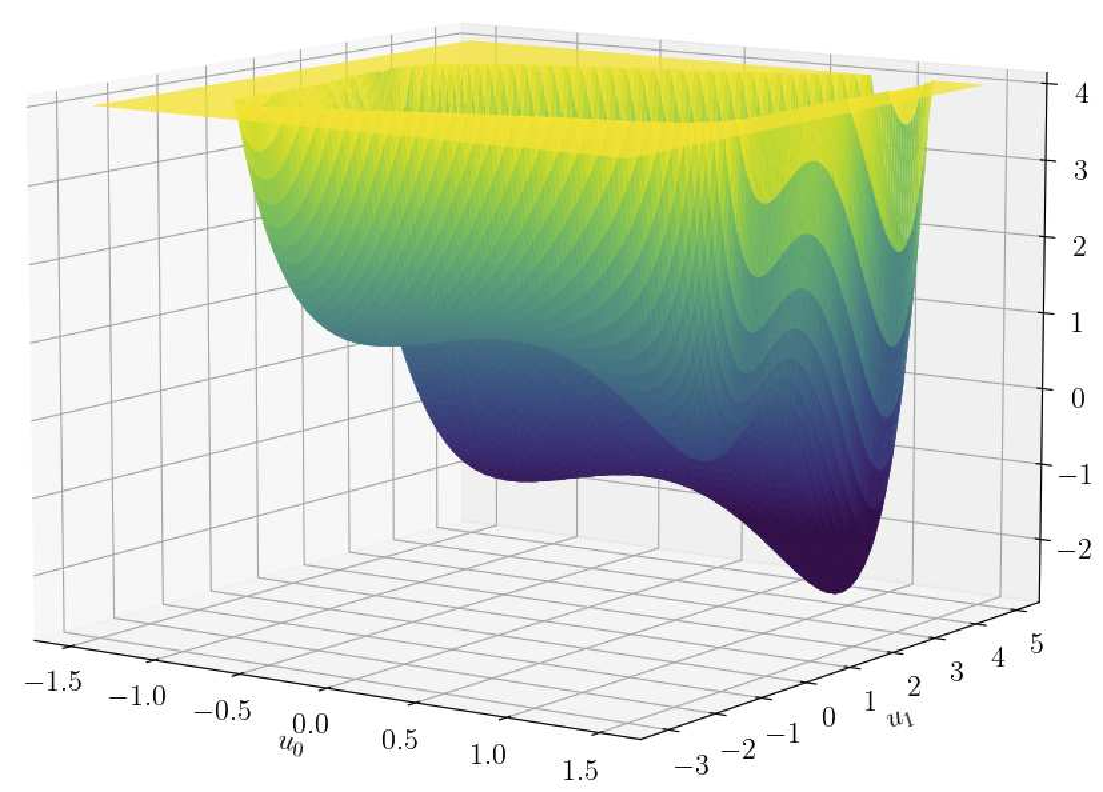}} \hspace{1mm}\subfloat[Example 2]{\label{fig:bifurcate landscape}\includegraphics[width=43mm]{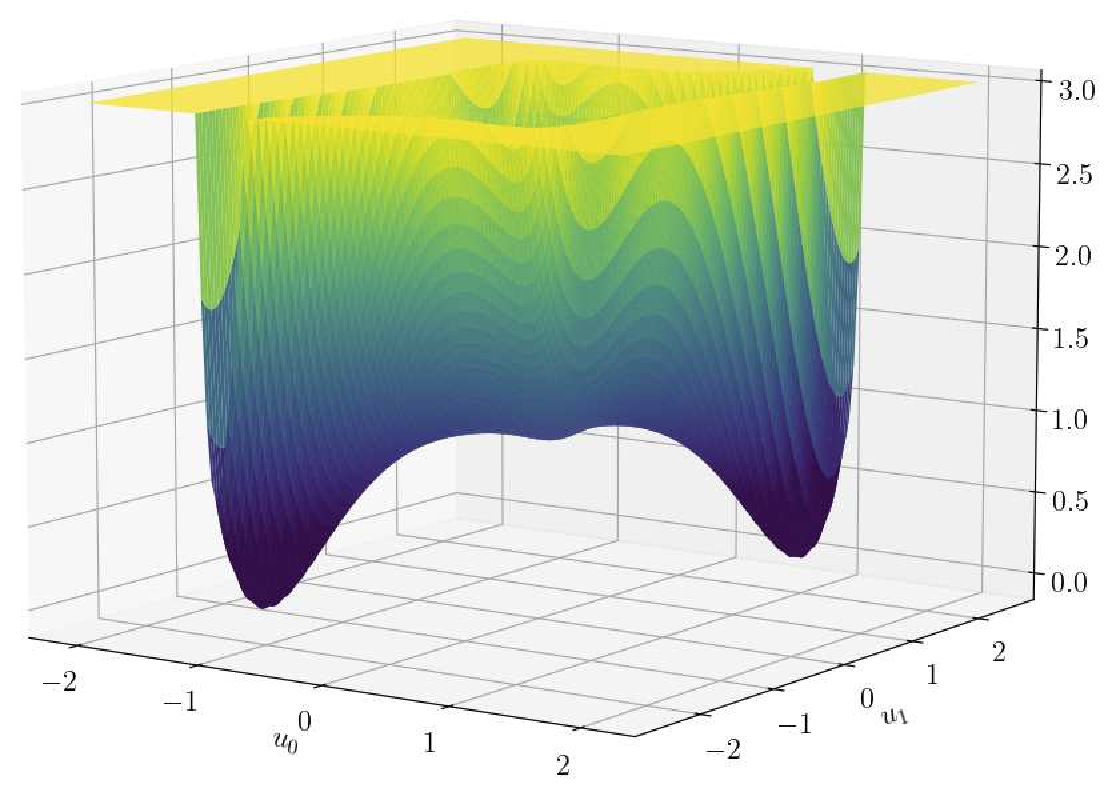}} \caption{Landscape of the one-shot optimization: (a) Each local minimizer is equivalent to a set of control inputs induced by each locally minimum control policy. (b) $(0,0)$ is a control input induced by a locally minimum control policy but not a local minimizer of the one-shot optimization. However, it is indeed a stationary point of the one-shot optimization.}
\label{fig:Landscape}
\vspace{-1mm}
\end{figure}

The optimal control problem can also be solved sequentially by DP. At the time step $1$, the Q-function is $Q_1^\pi(x,u_1)=c_1(x,u_1)$,
which has the maximum point $x+1$, the spurious local minimizer $x$ and the global minimizer $x+3$. One can choose between the two different continuous policies 
\[\pi_1(x)=\begin{cases} 
      x, & |x| \leq 10,\\
      10\cdot \text{sgn}(x), &\text{otherwise},
   \end{cases} \]
or
\[\pi_1(x)=\begin{cases}
      x+3, & -13\leq x \leq 7,\\
      10\cdot \text{sgn}(x), & \text{otherwise},
   \end{cases}\]
where $\text{sgn}(x)$ denotes the sign of $x$. The first policy has the cost-to-go function $J_1^\pi (x) = -\frac{1}{12}(3x^4+16x^3+18x^2)+\exp(x^4)$ for $|x|\leq 10$ and the second policy has $J_1^\pi (x) = -\frac{1}{12}(3x^4+16x^3+18x^2+27)+\exp(x^4)$ for $-13\leq x \leq 7$.

At the time step $0$ and the initial state $x_0=0$, the Q-function is  $Q_0^\pi(0,u_0)=J_1^\pi (u_0)$. For the first policy, the Q-function has a spurious local minimizer at $u_0=-0.523$ and a global minimum at $u_0=0.938$. If we choose $\pi_0(0)=-0.523$, then the induced input under $\pi$ of DP is $(-0.523,-0.523)$ and if we choose $\pi_0(0)=0.938$, then the induced input under $\pi$ of DP is $(0.938,0.938)$. 
Both of these input sequences are spurious local minimizers of the one-shot problem.

The Q-function of the second policy has a spurious local minimizer at $u_0=-0.523$ and a global minimum at $u_0=0.938$. If we choose $\pi_0(0)=0.938$, then the locally minimum control policy $\pi$ is non-spurious and its induced input $(0.938,3.938)$ is the global minimizer of the one-shot problem. However, if we choose $\pi_0(0)=-0.523$, then $\pi$ is spurious and its induced input $(-0.523,2.477)$ is the spurious minimizer of the one-shot problem.

In this example, one can observe that each strictly local minimizer of the one-shot problem corresponds to a locally minimum control policy of DP, which validates the result of Theorem \ref{thm: one-shot to DP}. In addition, it can be noticed that since $\nabla_u^2 Q_0^\pi(0,-0.523)$ and $\nabla_u^2 Q_0^\pi(0,0.938)$ are both strictly positive for each of the two policies, Theorem \ref{thm: DP to one-shot epsilon PD} clearly holds.


\textit{Example 2}: Consider the problem in Example 1 but change $c_1(x,u)$ to $\frac{1}{4}u^4-\frac{x}{3}u^3 -x^2u^2+\exp{(x^4)}$. At the initial state $x_0=0$, the one-shot problem can be written as 
\begin{align*}
    \min_{u_0 \in A,u_1 \in A} \Big\{\frac{1}{4}u_1^4-\frac{u_0}{3}u_1^3 -u_0^2u_1^2+\exp{(u_0^4)}\Big\}.
\end{align*}
It has 3 stationary points $(0,0)$ and $((\log (\frac{8}{3}))^{\frac{1}{4}},2(\log (\frac{8}{3}))^{\frac{1}{4}})$ and $(-(\log (\frac{8}{3}))^{\frac{1}{4}},-2(\log (\frac{8}{3}))^{\frac{1}{4}})$. The latter two are the global minimizers of this one-shot problem. 
For $(0,0)$, we take $u_0=u_1=\epsilon$ and use the Taylor expansion of the exponential function to arrive at $\frac{1}{4}\epsilon^4-\frac{1}{3}\epsilon^4 -\epsilon^4+\exp{(\epsilon^4)}
 =-\frac{1}{12}\epsilon^4+1+o(\epsilon^4),$
which is strictly less that $1$ for sufficiently small values of $\epsilon$. This implies that $(0,0)$ is not a local minimizer of the one-shot problem. The landscape of this objective function is shown in Fig.~\ref{fig:bifurcate landscape}.
It can also be solved sequentially by DP. For the initial state $x_0$, it has 3 different induced input sequences under the locally minimum control policy: $(\log (\frac{8}{3}))^{\frac{1}{4}},2(\log (\frac{8}{3}))^{\frac{1}{4}})$, $(-(\log (\frac{8}{3}))^{\frac{1}{4}},-2(\log (\frac{8}{3}))^{\frac{1}{4}})$ and $(0,0)$. The first two points are the global minimizers of the one-shot problem but $(0,0)$ is not a local minimizer of the one-shot problem.

In this example, $\nabla^2_uQ_1^\pi(0,0) =  \nabla^2_u c_1(0,0) =0$
violates the assumptions in Theorem \ref{thm: DP to one-shot epsilon PD},
and thus $(0,0)$ is not a local minimizer of the one-shot problem. This clarifies the role of the regularity conditions needed in the theorem. On the other hand, $Q_1^\pi(x,\cdot)$ has three stationary control policies $0, -x, 2x$.
Consistent with Theorem \ref{thm: stationary}, $(0,0)$ is a saddle point (which is a stationary point) of the one-shot optimization.

\section{Deterministic Problem under a Parameterized policy}
\label{sec:detparam}

\subsection{Problem Formulation}\label{detparam prob-form}

In Section \ref{sec:det}, the one-shot optimization approach is referred to as an open-loop control, in the sense that it determines all the control inputs at once, only given an initial state. On the other hand, the dynamic programming approach is referred to as a closed-loop control, in the sense that the control input of each time step is the function of the output of the previous step \cite{bertsekas1995dynamic}. In this section, we formulate both approaches to a closed-loop control. To achieve this, we can replace the control inputs of the one-shot optimization with a parameterized policy. We still optimize over a vector at once, which means that it can be solved in a one-shot fashion. However, this method becomes a type of closed-loop control in the sense that a function of both the parameters at each step and the output of the previous step determines the control input \cite{kumar1986stochastic}. Also, it is reasonable to adopt such parameterized policies for dynamic programming as well, which would still be a closed-loop control. Note that both approaches now optimize over a set of parameters so that they can be directly compared in terms of their landscapes. This motivates us to modify Definitions \ref{def: Q and cost-to-go}, \ref{def: local min OS}, \ref{def: local min DP}, \ref{def: stationary point of the one-shot optimization}, and \ref{def: stationary control policy of DP} to incorporate parameterized policies.

\begin{definition} \label{def: param policy}
Given a parameter space $\Theta$ and a compact action space $A$, let $\mu_\theta(\cdot):\mathbb{R}^N \to A$ be a bounded real-valued function parameterized by $\theta \in \Theta$, which satisfies the continuity assumption that for all $\epsilon >0$, there exists $\delta>0$ such that
\begin{align}\label{continuity}
\| \theta - \theta'\|<\delta& \Rightarrow \sup_{x\in \mathbb{R}^N} \|\mu_{\theta}(x)-\mu_{\theta'}(x)\|<\epsilon.
\end{align}
\end{definition}

Now, we modify the deterministic problems (\ref{eq: P1}), (\ref{eq:P2}), and (\ref{eq:DP}) to a discrete-time finite-horizon deterministic optimal control problem under a parameterized policy as follows:
\begin{equation}\label{eq:DP1}
\tag{PP1}
\begin{alignedat}{2}
\min_{\theta_0,\dots,\theta_{n-1}\in\Theta} \quad &\mathrlap{\sum_{i=0}^{n-1} c_i(x_i,\mu_{\theta_i}(x_i))+ c_n(x_n)}\\
\nonumber \text{s.t.} \quad & x_{i+1}=f_i(x_i,\mu_{\theta_i}(x_i)), & \quad & i=0,\dots,n-1, \\
&x_0 \text{ is given}.
\end{alignedat}
\end{equation}


\begin{definition} \label{def: detparam Q and cost-to-go}
Given a control policy parameter vector $\pi=(\theta_0,\dots,\theta_{n-1})$,
the associated Q-functions $Q^\pi_k(\cdot,\cdot)$ and cost-to-go functions $J^\pi_k(\cdot)$ under the policy $\pi$ are defined in a backward way from the time step $n-1$ to the time step $0$ through the following recursion:
\begin{align*} 
&J^\pi_n(x)=c_n(x),\\
&Q^\pi_k(x,\mu_{\theta}(x))=c_k(x,\mu_{\theta}(x))+J^\pi_{k+1}(f_k(x,\mu_{\theta}(x))),\\ &\hspace{60mm} k=0,\dots,n-1, \\
&J^\pi_k(x)=Q^\pi_k(x,\mu_{\theta_k}(x)),\quad  k=0,\dots,n-1.
\end{align*}
\end{definition}
\vspace{3pt}
Then, the one-shot optimization problem \eqref{eq:DP1} can be equivalently written as
\begin{equation}\label{eq:DP2}
\tag{PP2}
\begin{split}
\min_{\pi=(\theta_0,\dots,\theta_{n-1}) \in \Theta^n} \quad & J_0^\pi (x_0)  
\end{split}
\end{equation}
and DP approach can be written as the following backward recursion:
\begin{equation}\label{eq:DDP}
\tag{PP3}
\begin{split}
J_n(x)&=c_n(x),\\
J_k(x)&=\min_{\theta \in \Theta}\{c_k(x,\mu_{\theta}(x))+J_{k+1}(f_k(x,\mu_{\theta}(x)))\},
\end{split}
\end{equation}
for $k=0,\dots,n-1$. Note that $\pi$ was previously defined as a control policy $(\pi_0,..,\pi_{n-1})$, but we use the equivalent definition $(\theta_0,...,\theta_{n-1})$ in the parameterized case. We also call it \textit{control policy parameter vector} alternatively. 

\begin{definition}[local minimizer of the one-shot optimization]\label{def: detparam local min OS}
A control policy parameter vector $\pi=(\theta_0^*,\dots,\theta_{n-1}^*)$ is said to be a local minimizer of the one-shot optimization if there exists $\epsilon>0$ such that
\[J_0^{\pi}(x_0) \leq  J_0^{\Tilde{\pi}}(x_0)\]
for all $\Tilde{\pi}=(\Tilde{\theta}_0, \dots, \Tilde{\theta}_{n-1}) \in (B(\theta_0^*,\epsilon)\cap \Theta) \times \dots \times (B(\theta_{n-1}^*,\epsilon)\cap \Theta)$. 
\end{definition}

\begin{definition}[local minimizer of DP] \label{def: detparam local min DP}
A control policy parameter vector $\pi=(\theta_0^*,\dots,\theta_{n-1}^*)$
is said to be a local minimizer of DP if for all $k\in \{0,\dots,n-1\}$ and for all $x \in \mathbb{R}^N$, the policy parameter $\theta_k^*$ is a local minimizer of $Q^\pi_k(x,\mu_{(\cdot)} (x))$, meaning that there exists $\epsilon_k^\ast>0$ such that
\vspace{-2pt}
\begin{equation}\label{def: param space}
    Q_k^{\pi}(x,\mu_{\theta_k^*}(x)) \leq Q_k^{\pi}(x,\mu_{\Tilde{\theta}}(x)) ,\quad \forall \Tilde{\theta} \in B(\theta_k^*,\epsilon_k^\ast)\cap \Theta.
\end{equation}
\end{definition}
\vspace{1pt}
\begin{definition}[Stationary point of the one-shot optimization]\label{def: detparam stationary point of the one-shot optimization}
A control policy parameter vector $\pi=(\theta_0^*,\dots,\theta_{n-1}^*)$ is said to be a stationary point of the one-shot optimization if for all $k \in \{0,\dots,n-1\}$, it holds that $-\nabla_{\theta_k}J_0^\pi (x_0) \in \mathcal{N}_{\Theta}(\theta_k^*)$.
\end{definition}

\begin{definition}[Stationary point of DP] \label{def: detparam stationary point of DP}
A control policy parameter vector $\pi=(\theta_0^*,\dots,\theta_{n-1}^*)$ is said to be a stationary point of DP if for all $k \in \{0,\dots,n-1\}$ and for all $x \in \mathbb{R}^N$, it holds that $-\nabla_{\theta_k} Q_k^\pi (x, \mu_{\theta_k^*}(x)) \in \mathcal{N}_{\Theta}(\theta_k^*)$.
\end{definition}

\begin{remark}
    By comparing (\ref{eq:P2}) with (\ref{eq:DP2}) as well as comparing Definition \ref{def: local min OS} with Definition \ref{def: detparam local min OS}, notice that one-shot optimization now considers $J_0^\pi (x_0)$ instead of $C(x_0; \theta_0, \dots, \theta_{n-1})$, since the two definitions are equivalent when the parameterized policy is incorporated.
\end{remark}

We can compare Definition \ref{def: detparam local min DP} with the following definition: 
\begin{equation}\label{def: action space}
 \begin{aligned}
&\forall k \in \{0,\dots,n-1\},\ \forall x \in \mathbb{R}^N, \
 \exists\epsilon_k^\ast(x)>0 \ \text{such that}\\ &Q_k^{\pi}(x,\mu_{\theta_k^*}(x)) \leq Q_k^{\pi}(x,\tilde{u}) ,\quad \forall \tilde{u} \in B(\mu_{\theta_k^*}(x), \epsilon_k^{*} (x)) \cap A, 
 \end{aligned}
 \end{equation}

Definition \ref{def: detparam local min DP} considers the open ball centered at the policy parameter in the parameter space, while (\ref{def: action space}) considers the corresponding open ball 
in the action space. Proposition \ref{prop: action and parameter} establishes the relationship between these definitions.

\begin{proposition}\label{prop: action and parameter}
If an arbitrary control policy parameter vector $\pi=(\theta_0^*, \dots, \theta_{n-1}^*)$ satisfies (\ref{def: action space}) with $\inf_{x\in \mathbb{R}^N} \epsilon_k^*(x) >0$ for all $k\in \{0,\dots,n-1\}$, then it is a local minimizer of DP.
\end{proposition}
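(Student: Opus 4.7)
The plan is to leverage the uniform continuity assumption \eqref{continuity} on the parameterized policy $\mu_\theta$ as the bridge that translates a neighborhood in the action space (where the hypothesis lives) into a neighborhood in the parameter space (where the conclusion needs to hold). The hypothesis that $\epsilon_k^\ast := \inf_{x \in \mathbb{R}^N}\epsilon_k^*(x) > 0$ is critical, because it furnishes a single radius in the action space that works simultaneously for every state $x$; without this uniform positive lower bound, no single $\delta$ in the parameter space could suffice.

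Concretely, I would fix an arbitrary $k \in \{0,\dots,n-1\}$ and set $\epsilon_k^\ast := \inf_{x \in \mathbb{R}^N} \epsilon_k^*(x) > 0$. By the continuity assumption \eqref{continuity} applied at the parameter $\theta_k^*$, there exists $\delta_k > 0$ such that for every $\tilde\theta \in \Theta$ with $\|\tilde\theta - \theta_k^*\| < \delta_k$, one has
\[
\sup_{x \in \mathbb{R}^N} \|\mu_{\tilde\theta}(x) - \mu_{\theta_k^*}(x)\| < \epsilon_k^\ast.
\]
Then for any such $\tilde\theta \in B(\theta_k^*,\delta_k) \cap \Theta$ and any state $x \in \mathbb{R}^N$, the point $\mu_{\tilde\theta}(x)$ lies in $A$ (by Definition \ref{def: param policy}, $\mu_\theta$ maps into $A$) and also lies in $B(\mu_{\theta_k^*}(x),\epsilon_k^\ast) \subseteq B(\mu_{\theta_k^*}(x),\epsilon_k^*(x))$.

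Hence the hypothesis \eqref{def: action space} applies with $\tilde u = \mu_{\tilde\theta}(x)$, giving
\[
Q_k^\pi(x, \mu_{\theta_k^*}(x)) \leq Q_k^\pi(x, \mu_{\tilde\theta}(x))
\]
for every $x \in \mathbb{R}^N$ and every $\tilde\theta \in B(\theta_k^*,\delta_k) \cap \Theta$. Taking $\epsilon_k^\ast := \delta_k$ (or rather renaming $\delta_k$ to the $\epsilon_k^\ast$ required by Definition \ref{def: detparam local min DP}) and repeating for each $k$ completes the argument, establishing that $\pi$ is a local minimizer of DP.

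The only genuine obstacle is a conceptual one, namely identifying that the role of $\inf_x \epsilon_k^*(x) > 0$ is exactly to produce a state-independent radius against which the uniform estimate in \eqref{continuity} can be matched; once this is noticed, the calculation is routine, and there is no need to invoke smoothness of the dynamics, cost, or Q-functions. I would also flag, as a minor remark, that the converse direction does not follow from this argument, because a perturbation in the action space need not arise from a perturbation in the parameter space, which is precisely what makes the DP landscape potentially simpler than the one-shot landscape in the parameterized setting.
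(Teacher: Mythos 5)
Your proof is correct and follows essentially the same route as the paper's: both use the uniform positivity of $\inf_{x}\epsilon_k^*(x)$ together with the uniform continuity assumption \eqref{continuity} to map a parameter-space ball $B(\theta_k^*,\delta_k)\cap\Theta$ into the action-space balls $B(\mu_{\theta_k^*}(x),\epsilon_k^*(x))\cap A$ for every $x$ simultaneously, and then invoke \eqref{def: action space}. Your added remark correctly identifies why the converse fails, which the paper also discusses in the remark following the proposition.
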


\begin{proof}
Since $\inf_{x\in \mathbb{R}^N} \epsilon_k^*(x) >0$, by the continuity assumption, for every $k\in \{0,\dots,n-1\}$, there exists $\delta_k>0$ such that 
\begin{align*}
\| \theta - \theta_k^*\|<\delta_k \Rightarrow \sup_{x\in \mathbb{R}^N} \|\mu_{\theta}(x) - \mu_{\theta_k^*}(x)\|<\inf_{x\in \mathbb{R}^N} \epsilon_k^*(x)
\end{align*}

That is, for all $\theta \in B(\theta_k^*, \delta_k) \cap \Theta$, $\|\mu_{\theta}(x) - \mu_{\theta_k^*}(x)\|<\epsilon_k^*(x)$ for all $x \in \mathbb{R}^N$. Notice that Definition \ref{def: param policy} implies that $\mu_{\theta}(x) \in A$ for all $x \in \mathbb{R}^N$. Thus, it holds for all $x \in \mathbb{R}^N$ that
\[
\theta \in B(\theta_k^*, \delta_k) \cap \Theta \Rightarrow \mu_{\theta}(x) \in B(\mu_{\theta_k^*}(x), \epsilon_k^*(x)) \cap A.
\]

Thus, given a control policy parameter vector satisfying (\ref{def: action space}), for all $k \in \{0, \dots, n-1\}$ and for all $x\in\mathbb{R}^N$, (\ref{def: param space}) holds if one substitutes $\epsilon_k^*$ with $\delta_k$. This completes the proof.
\end{proof}

\begin{remark}
The converse of Proposition \ref{prop: action and parameter} does not hold. For example, suppose there exists $\epsilon_k^* > 0$ such that $\mu_\theta(x)$ takes the same value for all $\theta \in B(\theta_k^*, \epsilon_k^*) \cap \Theta$. While this control policy satisfies the continuity assumption, $\theta_k^*$ is clearly a local minimizer of DP, which satisfies (\ref{def: param space}). However, it is even possible that $\mu_{\theta_k^*}(x)$ is a strict local maximizer of $Q_k^\pi(x,\cdot)$.


Note that the condition $\inf_{x\in \mathbb{R}^N} \epsilon_k^*(x) >0$ is necessary for Proposition \ref{prop: action and parameter}. Thus, the proposition implies that if we use our notion of a local minimizer of DP, we no longer need to assume $\inf_{x\in \mathbb{R}^N} \epsilon_k^*(x)>0$ while establishing the relationship from DP to one-shot optimization, which was the case in the (non-parameterized) deterministic case presented in our conference paper (see Theorem 2 in \cite{ding2021analysis}).
\end{remark}

\subsection{From DP to one-shot optimization}\label{detparam DPOS}

In this subsection, we will show that in the deterministic case with a parameterized policy, each local minimizer (stationary point) of DP directly corresponds to some local minimizer (stationary point) of the one-shot optimization. 

\begin{theorem}\label{thm: detparam loc min}
Consider a local minimizer of DP $\pi = (\theta_0^*, ..., \theta_{n-1}^*)$. Then, $\pi$ is also a local minimizer of the one-shot optimization.
\end{theorem}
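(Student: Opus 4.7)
The plan is to reduce the problem to a telescoping/performance-difference identity, after which the proof becomes a direct application of the DP local-minimizer property. The crucial observation is that in Definition~\ref{def: detparam local min DP}, the local minimality radius $\epsilon_k^\ast$ is independent of the state $x$: the inequality $Q_k^\pi(x,\mu_{\theta_k^\ast}(x)) \le Q_k^\pi(x,\mu_{\tilde\theta}(x))$ is required to hold uniformly for all $x \in \mathbb{R}^N$. This uniformity is what lets us evaluate the inequality at an arbitrary perturbed state trajectory.

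First I would fix any candidate perturbation $\tilde\pi = (\tilde\theta_0,\dots,\tilde\theta_{n-1})$ with $\tilde\theta_k \in B(\theta_k^\ast,\epsilon)\cap\Theta$ (for an $\epsilon>0$ to be chosen) and let $\tilde x_0 = x_0$, $\tilde x_{k+1} = f_k(\tilde x_k,\mu_{\tilde\theta_k}(\tilde x_k))$ be the corresponding state trajectory. Writing $V_k(x) \coloneq J_k^\pi(x)$ for the cost-to-go of the base policy $\pi$, I would add and subtract $V_k(\tilde x_k)$ telescopically in the expression for $J_0^{\tilde\pi}(x_0)$. Using $V_n = c_n$ and the recursion $V_k(\tilde x_k) = Q_k^\pi(\tilde x_k,\mu_{\theta_k^\ast}(\tilde x_k))$, together with $c_k(\tilde x_k,\mu_{\tilde\theta_k}(\tilde x_k)) + V_{k+1}(\tilde x_{k+1}) = Q_k^\pi(\tilde x_k,\mu_{\tilde\theta_k}(\tilde x_k))$, this collapses to the performance-difference identity
\begin{equation*}
J_0^{\tilde\pi}(x_0) - J_0^\pi(x_0) \;=\; \sum_{k=0}^{n-1} \bigl[Q_k^\pi(\tilde x_k,\mu_{\tilde\theta_k}(\tilde x_k)) - Q_k^\pi(\tilde x_k,\mu_{\theta_k^\ast}(\tilde x_k))\bigr].
\end{equation*}
Choosing $\epsilon = \min_k \epsilon_k^\ast$ and invoking the DP local-minimizer inequality at each point $x = \tilde x_k$ shows every summand is non-negative, which yields $J_0^{\tilde\pi}(x_0) \ge J_0^\pi(x_0)$ and hence the desired local minimality for the one-shot problem.

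I expect the main conceptual step to be recognizing that, thanks to Definition~\ref{def: detparam local min DP} giving a pointwise-in-$x$ guarantee with a uniform radius, no continuity argument on $\tilde x_k$ is required: unlike the proof of Theorem~\ref{thm: DP to one-shot epsilon PD}, where one had to ensure the perturbed trajectory stayed in a neighborhood of $x_k^\ast$ to apply a second-order convexity condition, here the DP local-minimizer inequality applies at \emph{any} state, so the perturbed trajectory needs no control at all. The only minor verification is that the telescoping manipulation is legitimate, which follows directly from Definition~\ref{def: detparam Q and cost-to-go}. No additional assumptions (such as smoothness of $\mu_\theta$ or convexity of $A$ or $\Theta$) are needed, which is consistent with the ``Assumptions'' row of Table~\ref{intro table} for Theorem~\ref{thm: detparam loc min}.
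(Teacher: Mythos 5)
Your proof is correct and is essentially the paper's own argument: the paper unfolds the same computation as a forward chain of inequalities $J_0^\pi(x_0)=Q_0^\pi(x_0,\mu_{\theta_0^*}(x_0))\le Q_0^\pi(x_0,\mu_{\tilde\theta_0}(x_0))=c_0+Q_1^\pi(\tilde x_1,\mu_{\theta_1^*}(\tilde x_1))\le\cdots\le J_0^{\tilde\pi}(x_0)$, which is exactly your performance-difference identity with the summands inserted one at a time, and it likewise takes $\epsilon=\min_k\epsilon_k^*$ and exploits the $x$-uniform radius in Definition~\ref{def: detparam local min DP}. No gaps.
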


\begin{proof}
Since $(\theta_0^*, ..., \theta_{n-1}^*)$ is a local minimizer of DP, there exist $\epsilon_0^*, \dots, \epsilon_{n-1}^* > 0$ such that 
\begin{align*}
J_0^\pi(x_0)&=Q_0^\pi (x_0, \mu_{\theta_0^*}(x_0)) \leq Q_0^\pi (x_0, \mu_{\tilde{\theta}_0}(x_0)) \\ 
&= c_0(x_0, \mu_{\tilde{\theta}_0}(x_0)) + Q_1^\pi (\tilde{x}_1, \mu_{\theta_1^*}(\tilde{x}_1)) \\ & \hspace{44mm} (\tilde{x}_1=f_0(x_0, \mu_{\tilde{\theta}_0}(x_0))) \\ 
&\leq c_0(x_0, \mu_{\tilde{\theta}_0}(x_0)) + Q_1^\pi (\tilde{x}_1, \mu_{\tilde{\theta}_1}(\tilde{x}_1)) \\ 
&= c_0(x_0, \mu_{\tilde{\theta}_0}(x_0)) +  c_1(\tilde{x}_1, \mu_{\tilde{\theta}_1}(\tilde{x}_1)) + Q_2^\pi (\tilde{x}_2, \mu_{\theta_2^*}(\tilde{x}_2)) \\&\hspace{40.4mm} \quad  (\tilde{x}_2=f_1(\tilde{x}_1, \mu_{\tilde{\theta}_1}(\tilde{x}_1))) \\ 
&\leq \dots \leq J_0^{\Tilde{\pi}}(x_0)
\end{align*}
where $\Tilde{\pi} = (\Tilde{\theta}_0, \dots, \Tilde{\theta}_{n-1})\in (B(\theta_0^*, \epsilon_0^*)\cap\Theta)\times \dots \times (B(\theta_{n-1}^*, \epsilon_{n-1}^*)\cap\Theta)$.

Choose $\epsilon=\min\{\epsilon_0^*, \dots \epsilon_{n-1}^* \}$. Then, $J_0^\pi(x_0)\leq J_0^{\Tilde{\pi}}(x_0)$ for all $\Tilde{\pi} = (\Tilde{\theta}_0, \dots, \Tilde{\theta}_{n-1})\in (B(\theta_0^*, \epsilon)\cap\Theta)\times \dots \times (B(\theta_{n-1}^*, \epsilon)\cap\Theta)$. This completes the proof.
\end{proof}

\begin{theorem}\label{thm: detparam stationary}
Consider a stationary point of DP $\pi = (\theta_0^*, ..., \theta_{n-1}^*)$. Let the corresponding state sequence be ($x_0^*, \dots x_n^*$). If for every $k \in \{0, \dots, n-1\}$, $\mu_{\theta_k}(x_k)$ is continuously differentiable with respect to $\theta_k$ in a neighborhood of $(x_k^*, \theta_k^*)$, then $\pi$ is also a stationary point of the one-shot optimization.
\end{theorem}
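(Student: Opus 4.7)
The plan is to reduce the one-shot stationarity to the DP stationarity evaluated at the single state $x = x_k^*$, rather than mimicking the full backward induction of Theorem \ref{thm: stationary}. The key observation is that in the parameterized setting, perturbing only $\theta_k$ while holding $\theta_0^*,\dots,\theta_{k-1}^*,\theta_{k+1}^*,\dots,\theta_{n-1}^*$ fixed does not alter the trajectory up to time $k$ (which depends only on the first $k$ parameters), and furthermore leaves the Q-function $Q_k^\pi$ and the cost-to-go $J_{k+1}^\pi$ completely unchanged.

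Concretely, let $\tilde{\pi}=(\theta_0^*,\dots,\theta_{k-1}^*,\tilde{\theta}_k,\theta_{k+1}^*,\dots,\theta_{n-1}^*)$ be a perturbation at coordinate $k$. Unfolding the cost-to-go recursion from time $0$ forward to $k$ and using that the states $x_0^*,\dots,x_k^*$ are unaffected, I would show
\[
J_0^{\tilde{\pi}}(x_0) \;=\; \sum_{i=0}^{k-1} c_i(x_i^*,\mu_{\theta_i^*}(x_i^*)) \;+\; Q_k^{\pi}(x_k^*,\mu_{\tilde{\theta}_k}(x_k^*)),
\]
where the Q-function appearing on the right is exactly the original $Q_k^\pi$ (not $Q_k^{\tilde\pi}$), because $J_{k+1}^{\tilde{\pi}} = J_{k+1}^{\pi}$ thanks to the fact that the future parameters coincide. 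Differentiating in $\tilde{\theta}_k$ at $\theta_k^*$ is legitimate under the continuous differentiability of $\mu_{\theta_k}(x_k^*)$ in $\theta_k$ and the differentiability of $\theta \mapsto Q_k^\pi(x_k^*,\mu_\theta(x_k^*))$ that is implicitly required by the DP stationarity hypothesis itself. The first sum is a constant, so the chain rule gives immediately
\[
\nabla_{\theta_k} J_0^\pi(x_0) \;=\; \nabla_{\theta_k} Q_k^\pi(x_k^*,\mu_{\theta_k^*}(x_k^*)).
\]

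Specializing the DP stationarity hypothesis to the particular state $x=x_k^*$ yields $-\nabla_{\theta_k} Q_k^\pi(x_k^*,\mu_{\theta_k^*}(x_k^*)) \in \mathcal{N}_{\Theta}(\theta_k^*)$, which combined with the identity above gives $-\nabla_{\theta_k} J_0^\pi(x_0) \in \mathcal{N}_{\Theta}(\theta_k^*)$. Repeating this for each $k\in\{0,\dots,n-1\}$ concludes that $\pi$ is a stationary point of the one-shot optimization in the sense of Definition \ref{def: detparam stationary point of the one-shot optimization}.

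In contrast to the non-parameterized analog (Theorem \ref{thm: stationary}), no dichotomy between ``interior action'' and ``vanishing Jacobian $\mathbf D^\pi_k$'' is needed, and no backward induction on $\nabla_x J_k^\pi$ is required. The conceptual hurdle is recognizing that in the parameterized setting the effect of a perturbation of $\theta_k$ localizes to the single DP sub-problem at stage $k$ (through the dependence of $J_k^\pi(x_k^*)$ on $\theta_k$ alone); once the decomposition above is in place, the chain rule is one line and the conclusion follows by simply picking the right state at which to invoke the DP stationarity. The only mild technicality is ensuring well-definedness of the gradients being manipulated, which is immediate from the differentiability assumption on $\mu_{\theta_k}$ together with the fact that the DP stationarity condition itself already presupposes the relevant differentiability.
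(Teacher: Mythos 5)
Your proposal is correct and follows essentially the same route as the paper: the paper's proof is precisely the observation that $\nabla_{\theta_k}J_0^\pi(x_0)=\nabla_{\theta_k}J_k^\pi(x_k^*)=\nabla_{\theta_k}Q_k^\pi(x_k^*,\mu_{\theta_k^*}(x_k^*))$, which is exactly your decomposition of $J_0^{\tilde\pi}(x_0)$ into a constant plus $Q_k^\pi(x_k^*,\mu_{\tilde\theta_k}(x_k^*))$, followed by invoking DP stationarity at $x=x_k^*$. Your write-up simply makes explicit the localization argument that the paper leaves implicit.
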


\begin{proof}
    Notice that $\nabla_{\theta_k}J_0^\pi (x_0) = \nabla_{\theta_k}J_k^\pi (x_k^*)=\nabla_{\theta_k}Q_k^\pi (x_k^*, \mu_{\theta_k^*}(x_k^*))$. Thus, $-\nabla_{\theta_k}J_0^\pi (x_0) \in \mathcal{N}_{\Theta}(\theta_k^*)$ for all $k \in \{0,\dots,n-1\}$, which means that $\pi$ is a stationary point of the one-shot optimization.
\end{proof}

\begin{remark}\label{converse detparam stationary}
The converse of Theorem \ref{thm: detparam stationary} clearly does not hold since one can generally find a point $x \in \mathbb{R}^N$ such that $\nabla_{\theta_k}Q_k^\pi (x_k^*, \mu_{\theta_k^*}(x_k^*)) \neq \nabla_{\theta_k}Q_k^\pi (x, \mu_{\theta_k^*}(x))$.     
\end{remark}

\subsection{From one-shot optimization to DP}\label{detparam OSDP}

In this subsection, we first show that a local minimizer of the one-shot optimization does not necessarily correspond to a local minimizer of DP; \textit{i.e.}, the converse of Theorem \ref{thm: detparam loc min} does not hold. Then, with Remark \ref{converse detparam stationary}, it is clear that the optimization landscape of the one-shot optimization is more complex than that of DP. As a by-product, if the one-shot problem has a \textit{low} complexity, so does the DP problem.

To develop a clear counterexample, we restrict the parameterized policy to a certain class as given below, which automatically satisfies the continuity assumption defined in Definition \ref{def: param policy}. 

\begin{definition}\label{def: linear combination}
Define our parameterized policy to be a linear combination of arbitrary linearly independent basis functions, while satisfying Definition \ref{def: param policy}; \textit{i.e.}, Given $m$ functions $f_i : \mathbb{R}^N \rightarrow \mathbb{R}^M$, $i=1,\dots,m$ and $\theta = [s_1,\dots,s_m]^T \in \Theta$, 
\begin{equation}
    \mu_\theta(x)=\sum_{i=1}^{m} s_i f_i(x) \in A,
\end{equation}
where there does not exist $(\Tilde{s}_1,\dots,\Tilde{s}_m)\neq 0$ such that for all $x$ in any set of non-zero measure, the following equation holds \cite{sansone1991function}: 
\begin{equation}\label{independence}
\sum_{i=1}^m \Tilde{s}_i f_i(x) = 0.
\end{equation}
\end{definition}

\begin{remark}\label{discrete continuous}
Since a set of isolated points is a set of measure zero, it is exempt from determining the independence of basis functions. When $x$ has a continuous distribution, the independence of basis functions implies that if (\ref{independence}) holds for all $x$ in the support of the distribution, $\Tilde{s}_1=\dots=\Tilde{s}_m=0$. When $x$ has a discrete distribution, since a set of all the possible values of $x$ is a set of measure zero, the independence of basis functions does not guarantee $\Tilde{s}_1=\dots=\Tilde{s}_m=0$ even if (\ref{independence}) holds for all possible values of $x$.
\end{remark}

Applications of a parameterized policy defined by Definition \ref{def: linear combination} arise in a piecewise polynomial function as well as a stochastic control.
The usefulness of the parameterized policy also manifests within Representer theorem \cite{scholkopf2002kernel}: a linear combination of kernels fully represents the solution of minimizing empirical risk. It switches the optimization problem in infinite-dimensional function space to finding the finite number of coefficients. 
The minimum number of parameters needed is the number of data points, which is generally much greater than the dimension of the output. Applying this to our parameterized policy, the number of parameters $m$ needs to be greater than the dimension of the action $M$ to cover all data points. For the remainder of this section, we call a policy satisfying $m>M$ as an overparameterized policy.

We now provide some evidence to refute the converse of Theorem \ref{thm: detparam loc min}, specifically if the parameterized policy class is a linear combination of basis functions. It turns out that a local minimizer of the one-shot optimization does not necessarily imply a local minimizer of DP in the overparameterized case.

\begin{proposition}\label{overparam}
Consider an overparameterized policy class defined by Definition \ref{def: linear combination}. Let $\pi=(\theta_0^*, \dots, \theta_{n-1}^*)$ be a local minimizer of DP. If there exists at least one $k\in\{0,\dots,n-1\}$ such that $\theta_k^*$ is in the interior of $\Theta$, then there exists an infinite number of local minimizers of the one-shot optimization corresponding to each local minimizer of DP.
\end{proposition}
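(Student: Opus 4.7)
The plan is to leverage Theorem \ref{thm: detparam loc min}, which already guarantees that $\pi^\ast = (\theta_0^\ast, \dots, \theta_{n-1}^\ast)$ is a local minimizer of the one-shot optimization with some common radius $\epsilon > 0$, and then to exploit the overparameterization $m > M$ to build a continuum of nearby parameter vectors that leaves the entire trajectory $(x_0^\ast,\dots,x_n^\ast)$ unchanged. Because the one-shot objective $J_0^{\pi}(x_0)$ is determined by the parameters only through the controls applied along the trajectory, each such perturbation will take the same cost as $\pi^\ast$, and a standard triangle-inequality argument will transfer the local-minimizer status of $\pi^\ast$ to the perturbation.

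I would first fix an index $k$ for which $\theta_k^\ast$ lies in the interior of $\Theta$, and define the linear map $L_k \colon \mathbb{R}^m \to \mathbb{R}^M$ by $L_k(s_1,\dots,s_m) = \sum_{i=1}^{m} s_i f_i(x_k^\ast)$, so that $\mu_\theta(x_k^\ast) = L_k(\theta)$ for every $\theta$. The rank of $L_k$ is at most $M < m$, so rank--nullity gives $\dim \ker L_k \geq m - M \geq 1$, and I may pick a nonzero $v_k \in \ker L_k$. For all sufficiently small $t > 0$ (small enough that $\theta_k^\ast + t v_k \in \Theta$ and $t \|v_k\| < \epsilon$), define
\[
\pi_t = (\theta_0^\ast, \dots, \theta_{k-1}^\ast,\; \theta_k^\ast + t v_k,\; \theta_{k+1}^\ast, \dots, \theta_{n-1}^\ast).
\]
Because $v_k \in \ker L_k$, the time-$k$ control is unchanged: $\mu_{\theta_k^\ast + t v_k}(x_k^\ast) = \mu_{\theta_k^\ast}(x_k^\ast)$. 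A forward induction on the dynamics shows that the trajectory under $\pi_t$ coincides with $(x_0^\ast,\dots,x_n^\ast)$, and therefore $J_0^{\pi_t}(x_0) = J_0^{\pi^\ast}(x_0)$.

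To conclude, I would show that $\pi_t$ itself satisfies Definition \ref{def: detparam local min OS}. For any $\tilde\pi$ with $\|\tilde\theta_i - (\pi_t)_i\| < \epsilon - t\|v_k\|$ for each $i$, the triangle inequality gives $\|\tilde\theta_i - \theta_i^\ast\| < \epsilon$ for every $i$, so $\tilde\pi$ lies in the product $\epsilon$-neighborhood of $\pi^\ast$, which yields $J_0^{\tilde\pi}(x_0) \geq J_0^{\pi^\ast}(x_0) = J_0^{\pi_t}(x_0)$. Hence $\pi_t$ is a local minimizer of the one-shot problem with radius $\epsilon - t\|v_k\| > 0$, and since $v_k \neq 0$ distinct values of $t \in (0,t_0)$ yield distinct $\pi_t$, producing infinitely many such minimizers.

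The main conceptual obstacle is identifying the correct null direction to perturb along. It is \emph{not} that $\mu_\theta(\cdot)$ must have a global null direction in parameter space, but only that the evaluation map $\theta \mapsto \mu_\theta(x_k^\ast)$ at a single trajectory point has a nontrivial kernel, and this is automatic from $m > M$ via rank--nullity regardless of whether the $f_i(x_k^\ast) \in \mathbb{R}^M$ happen to span all of $\mathbb{R}^M$. Everything else (trajectory invariance, nesting of product balls, and the infinitude of the family) is elementary bookkeeping; the interior-of-$\Theta$ hypothesis is used exclusively to ensure that the perturbed parameters remain feasible.
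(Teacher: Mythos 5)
Your proposal is correct and follows essentially the same route as the paper's proof: both invoke Theorem \ref{thm: detparam loc min} to establish that $\pi$ is a one-shot local minimizer, extract a nonzero null-space direction of the $M\times m$ evaluation matrix $[f_1(x_k^*)\ \cdots\ f_m(x_k^*)]$ via $m>M$, use the interior assumption to keep the perturbed parameter feasible, and conclude with a nested-ball argument that each perturbation remains a local minimizer with the same cost. The only cosmetic difference is your explicit triangle-inequality radius $\epsilon - t\|v_k\|$ versus the paper's choice $\delta \le \epsilon/(2\|v\|)$, which are the same idea.
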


\begin{proof}
Consider the state sequence $(x_0^*, \dots, x_n^*)$ induced by a local minimizer of DP $\pi$. Let $k$ be an index for which $\theta_{k}^*$ is in the interior of $\Theta$. Then, one can express the action taken at step $k$ as $\mu_{\theta_{k}^*}(x_{k}^*)=\sum_{i=1}^{m} s_i^* f_i(x_{k}^*)$ with $\theta_{k}^*=(s_0^*, s_1^*, \dots, s_m^*)$ by Definition \ref{def: linear combination}. Since the policy is overparameterized, $m$ is greater than the dimension of the action $M$. Now, consider the matrix equation
\begin{equation}\label{matrixeq}
\begin{bmatrix}
f_0(x_{k}^*) & f_1(x_{k}^*) & \dots & f_m(x_{k}^*)
\end{bmatrix}
\theta_{k}
= \mu_{\theta_{k}^*}(x_{k}^*),
\end{equation}
where $\theta_{k}$ is an $m \times 1$ vector variable, and let $F_{k}^*$ denote the first matrix in the left-hand side, which is an $M \times m$ constant matrix given by $x_k^*$. (\ref{matrixeq}) has at least one solution: $\theta_{k}^*$.

The dimension of the null space of $F_{k}^*$ is greater than 0 due to $m>M$. We take any nonzero element $v$ from the null space. Then, for all $\delta\in\mathbb{R}$, $\theta_{k}^*+\delta v$ satisfies (\ref{matrixeq}). Since $\theta_{k}^*$ is in the interior of $\Theta$, one can pick $\epsilon_1 >0$ such that $B(\theta_{k}^*, \epsilon_1 ) \subset \Theta$. Thus, for $0 \leq \delta \leq \frac{\epsilon_1}{\|v\|}$, $\theta_{k}^*+\delta v \in B(\theta_{k}^*, \epsilon_1)$ preserves the state and action sequence associated with $\theta_k^*$ due to (\ref{matrixeq}). The induced cost is also indeed preserved.

By Theorem \ref{thm: detparam loc min}, $\pi$ is a local minimizer of the one-shot optimization. Now, we select $\epsilon_2 >0$ such that $J_0^\pi (x_0) \leq J_0^{\Tilde{\pi}} (x_0)$ for all $\Tilde{\pi}=(\theta_0^*, \dots, \Tilde{\theta}_{k}, \dots, \theta_{n-1}^*)$ where $\Tilde{\theta}_{k} \in B(\theta_{k}^*, \epsilon_2)\cap \Theta$. Let $\epsilon := \min \{\epsilon_1, \epsilon_2\}>0$. 
Then, for $0 \leq \delta \leq \frac{\epsilon}{2\|v\|}$, we have $B(\theta_{k}^*+\delta v, \delta \|v\|) \subset B(\theta_{k}^*, \epsilon)$. Since $\theta_{k}^*+\delta v$ preserves the induced cost, 
$(\theta_0^*, \dots, \theta_{k}^*+\delta v, \dots, \theta_{n-1}^*)$ is a local minimizer of the one-shot optimization for all $0 \leq \delta \leq \frac{\epsilon}{2\|v\|}$. This completes the proof.
\end{proof}

Proposition \ref{overparam} implies that for every $k\in \{0,\dots,n-1\}$, $\theta_k$ of a ``strict" local minimizer of the one-shot optimization does not lie in the interior of $\Theta$. Thus, one can think of constructing a strict local minimizer by restricting the area of $\Theta$. It turns out that given a strict local minimizer of the one-shot optimization and the induced input sequence, no other points can retrieve the same input sequence if $\Theta$ is convex.

\begin{lemma}\label{thetaconvex}
Consider a strict local minimizer of the one-shot optimization $\pi=(\theta_0^*, \dots, \theta_{n-1}^*)$. Let $(x_0^*, \dots, x_n^*)$ be the induced state sequence. Suppose that $\Theta$ is convex and the parameterized policy is defined by Definition \ref{def: linear combination}. Then, $\pi$ is the unique control policy parameter vector that achieves the input sequence $(\mu_{\theta_0^*}(x_0^*), \dots, \mu_{\theta_{n-1}^*}(x_{n-1}^*))$.
\end{lemma}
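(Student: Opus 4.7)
The plan is to argue by contradiction. Suppose there exists a second parameter vector $\pi' = (\theta_0', \dots, \theta_{n-1}') \neq \pi$ that induces the same input sequence $(\mu_{\theta_0^*}(x_0^*), \dots, \mu_{\theta_{n-1}^*}(x_{n-1}^*))$. The key idea is to exploit the linearity of the parameterization in Definition \ref{def: linear combination} together with the convexity of $\Theta$ to produce a whole line segment of parameter vectors, all of which yield the same cost as $\pi$, which then contradicts the assumption that $\pi$ is \emph{strict}.

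First I would form the convex combination $\pi^t := (1-t)\pi + t\pi'$ for $t \in [0,1]$, which lies in $\Theta^n$ by convexity of $\Theta$. The core step is an induction on $k$ showing that the state and action sequences induced by $\pi^t$ coincide with $(x_0^*, \dots, x_n^*)$ and $(\mu_{\theta_0^*}(x_0^*), \dots, \mu_{\theta_{n-1}^*}(x_{n-1}^*))$, respectively, for every $t \in [0,1]$. The base case $k=0$ is trivial since $x_0$ is fixed. For the inductive step, assuming the state at stage $k$ is $x_k^*$, the linear combination structure of $\mu_\theta$ yields
\[
\mu_{(1-t)\theta_k^* + t\theta_k'}(x_k^*) = (1-t)\mu_{\theta_k^*}(x_k^*) + t\mu_{\theta_k'}(x_k^*) = \mu_{\theta_k^*}(x_k^*),
\]
using the hypothesis that $\pi'$ induces the same input sequence at the reference states. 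Applying $f_k$ then yields $x_{k+1}=x_{k+1}^*$, closing the induction.

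This implies that $J_0^{\pi^t}(x_0) = J_0^{\pi}(x_0)$ identically on $[0,1]$. However, since $\pi' \neq \pi$, for arbitrarily small $t > 0$ we have $\pi^t \neq \pi$, and $\pi^t \to \pi$ as $t \to 0^+$, so $\pi^t$ lies in every neighborhood of $\pi$. This contradicts the strictness of $\pi$ as a local minimizer of the one-shot optimization, which would demand $J_0^{\pi^t}(x_0) > J_0^{\pi}(x_0)$ for all such $\pi^t$ sufficiently close to $\pi$.

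The main conceptual obstacle is making sure the inductive step genuinely uses the linearity in Definition \ref{def: linear combination} (rather than a generic continuity assumption on $\mu_\theta$), and verifying that convexity of $\Theta$ is used precisely to keep $\pi^t$ feasible. A subtlety to highlight is that we do not need the basis functions $f_1,\dots,f_m$ to be linearly independent at the single point $x_k^*$ — independence in the sense of Definition \ref{def: linear combination} is not invoked here; instead we only rely on the algebraic fact that $\mu_{(1-t)\theta + t\theta'}(x) = (1-t)\mu_\theta(x) + t\mu_{\theta'}(x)$ pointwise, which is immediate from the linear combination form. Thus the argument is short once the reduction to a convex combination of parameters is made.
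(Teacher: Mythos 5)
Your proof is correct and follows essentially the same route as the paper's: both exploit the linearity of the policy class and the convexity of $\Theta$ to produce a segment of parameter vectors that reproduce the same input/state sequence and hence the same cost, and then let the segment parameter tend to the endpoint to contradict strictness of the local minimizer. The only cosmetic difference is that you treat the full vector $(\theta_0,\dots,\theta_{n-1})$ at once via a trajectory induction, whereas the paper argues coordinate-by-coordinate for each $\theta_k^*$.
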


\begin{proof}
For every $k \in \{0,\dots,n-1\}$, $\mu_{\theta}(x_k^*)=\sum_{i=1}^{m_k} s_i f_i(x_k^*)$ where $\theta=(s_1,\dots,s_{m_k})$. Let $\theta_k^*=(s_1^*,\dots,s_{m_k}^*)$. Since $\pi$ is a strict local minimizer of the one-shot optimization, $\mu_{\theta}(x_k^*) \neq \mu_{\theta_k^*}(x_k^*)$ in the neighborhood of $\theta_k^*$ if $\theta \neq \theta_k^*$; \textit{i.e.}, there exists $\epsilon>0$ such that 
\begin{align}\label{hyperplane}
\{\theta \in  B(\theta_k^*, \epsilon)\cap\Theta: \sum_{i=1}^{m_k} s_i f_i(x_k^*)=\mu_{\theta_k^*}(x_k^*)\}=\{\theta_k^*\}.
\end{align}
Assume that there exists $\Tilde{\theta} \neq \theta_k^*$ such that $\sum_{i=1}^{m_k} \Tilde{s}_i f_i(x_k^*)=\mu_{\theta_k^*}(x_k^*)$ where $\Tilde{\theta}=(\Tilde{s}_1, \dots, \Tilde{s}_{m_k})$. Then, for $\lambda\in[0,1]$, one can obtain $\sum_{i=1}^{m_k} (\lambda s_i^* +(1-\lambda)\Tilde{s}_i) f_i(x_k^*)=\mu_{\theta_k^*}(x_k^*)$ by linearity and $\lambda \theta_k^* + (1-\lambda)\Tilde{\theta}\in\Theta$ by convexity. Letting $\lambda \rightarrow 1$, one can construct an element of the left-hand side of (\ref{hyperplane}) distinct from $\theta_k^*$. 
By contradiction, $\theta_k^*$ is the unique point that achieves $\mu_{\theta_k^*}(x_k^*)$.
\end{proof}


Note that Lemma \ref{thetaconvex} does not necessarily imply that a strict local minimizer of the one-shot optimization is a local minimizer of DP even if $\Theta$ is convex. A simple counterexample can be constructed by considering the 1-step problem 
\begin{align*}
  &c_0(x,\mu_{\theta}(x)) =  \frac{1}{4}\mu_{\theta}(x)^4 -\frac{1}{3} (x^2+2x) \mu_{\theta}(x)^3+ \\ &\hspace{10mm}  \frac{1}{2}(2x^3+x-1)\mu_{\theta}(x)^2 -(x^4-x^3+x^2-x)\mu_{\theta}(x), \\ &c_1(x,\mu_{\theta}(x))= 0, \ f_0(x,\mu_{\theta}(x))=x+\mu_{\theta}(x).
\end{align*}
with the parameterized policy $\mu_{\theta}(x)=d_1 x+d_2$ where $\theta = (d_1,d_2)$ and $\Theta = \{(d_1,d_2): 1 \leq 2d_1-d_2 \leq 3, 1 \leq 2d_1+d_2 \leq 3 \}$ which is convex. At the initial state $x_0=1$, the one-shot problem can be written as
\begin{align*}
    \min_{(d_1,d_2) \in \Theta} \ & \left\{\frac{1}{4}(d_1+d_2)^4 -(d_1+d_2)^3+ (d_1+d_2)^2 \right\}.
\end{align*}

Each vector $(d_1, d_2) \in \Theta $ which satisfies $d_1+d_2=0$ or $d_1+d_2=2$ is a local minimizer of the one-shot optimization. 
Since $\{(d_1,d_2) :d_1+d_2=0\}\cap \Theta = \{(1,-1)\}$ and $\{(d_1,d_2) :d_1+d_2=2\}\cap \Theta=\{(1,1)\}$, we have $(1,-1)$ and $(1,1)$ as strict local minimizers of the one-shot optimization. On the other hand, since $\nabla_{\theta} Q_0^\pi(x,\mu_{\theta}(x))=\nabla_{\theta} c_0(x,\mu_{\theta}(x))=[g(x,\theta)x, g(x,\theta)]^T$ where $g(x,\theta)=(\mu_{\theta}(x)-(x^2+1))(\mu_{\theta}(x)-x)(\mu_{\theta}(x)-(x-1))$, a local minimizer of DP should be the parameter that yields $\mu_{\theta}(x)=x-1$ or $\mu_{\theta}(x)=x^2+1$ for all $x\in \mathbb{R}^N$. Since a linear policy cannot contain $x^2+1$, $(1,-1) \in \Theta$ is the only local minimizer of DP. Thus, $(1,1)$ is a strict local minimizer of the one-shot optimization but not a local minimizer of DP. Fig.~\ref{fig: deterministic one-shot not DP} shows the domain and the landscape of the one-shot optimization. 

\begin{figure}[t]
\centering 
\subfloat[Domain]{\label{fig: deterministic 2D}\includegraphics[width=24mm]{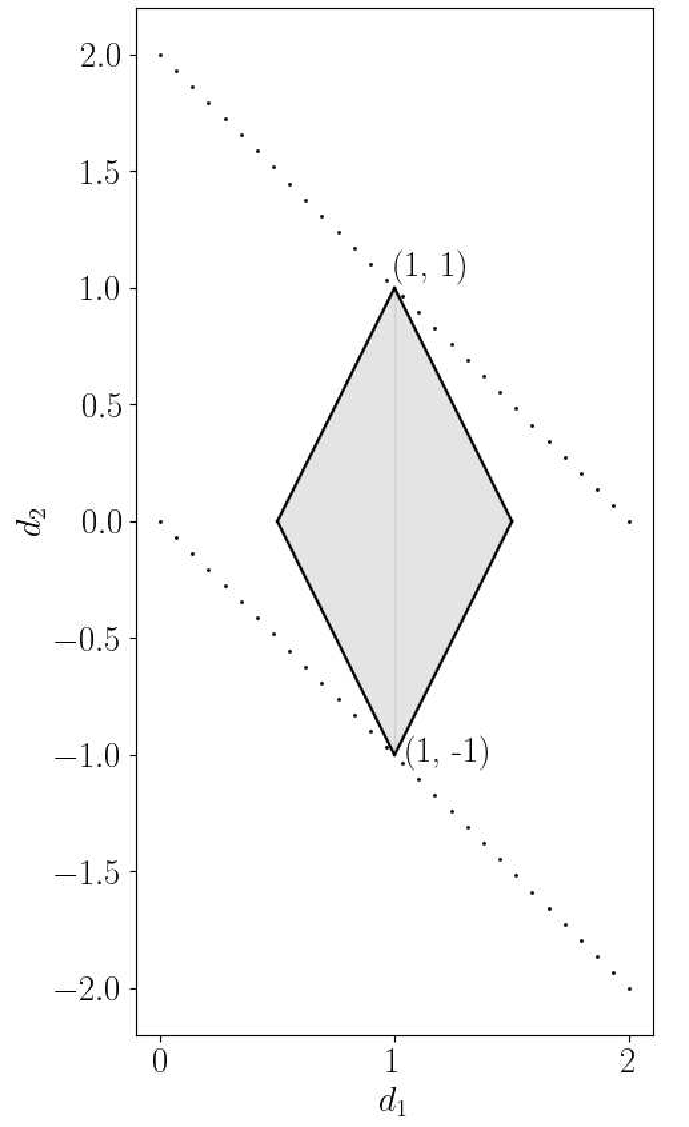}} \hspace{1mm} \subfloat[Landscape]{\label{fig:deterministic 3D}\includegraphics[width=48mm]{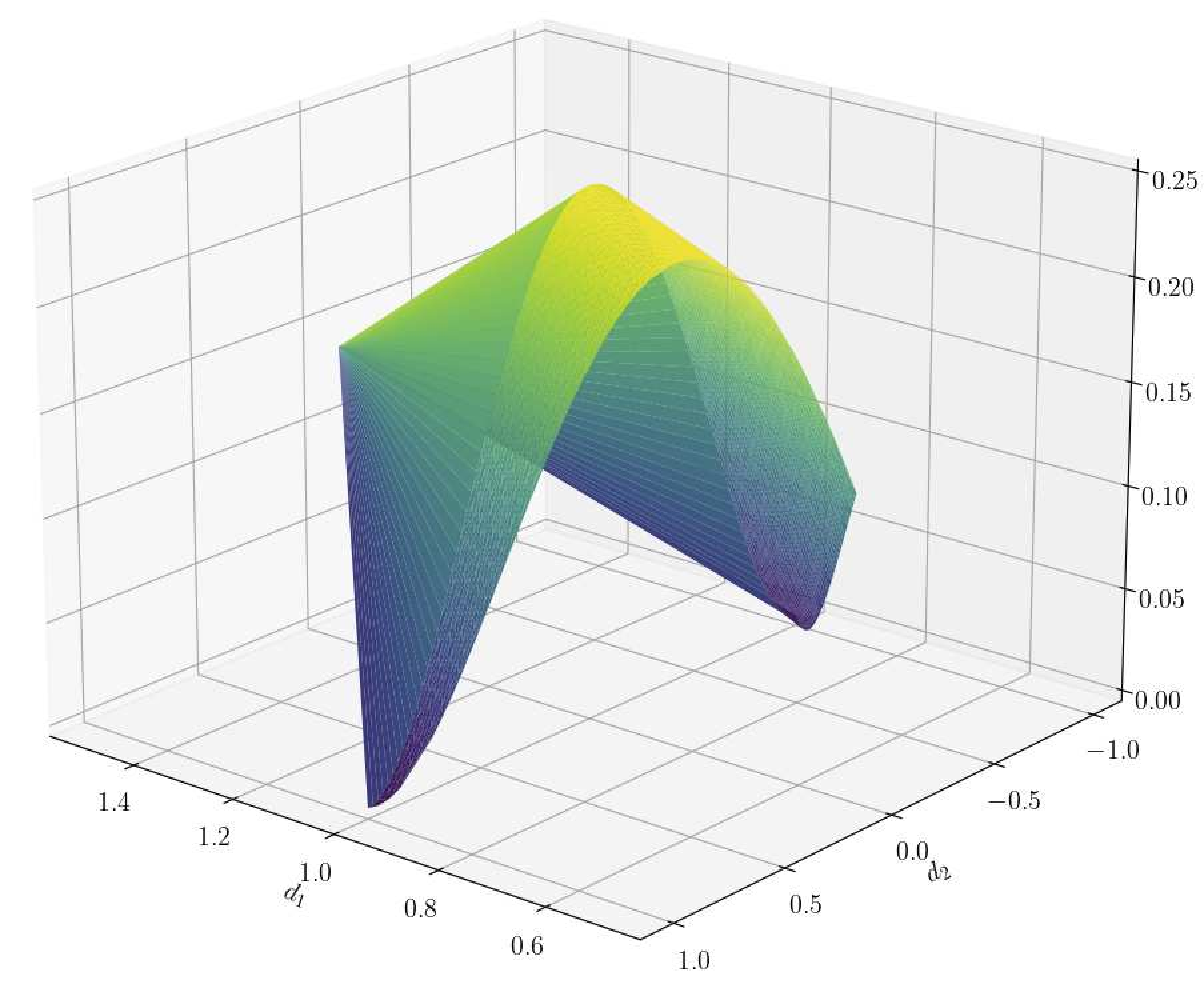}}
\caption{The domain and the landscape of the one-shot optimization for a deterministic parameterized problem: (a) The gray-colored area is the domain of the parameter space. The intersection between the dotted lines and the domain is $\{(1,1), (1,-1)\}$. (b) Both $(1,1)$ and $(1,-1)$ are a strict local minimizer of the one-shot optimization but only $(1,-1)$ is a local minimizer of DP.}
\label{fig: deterministic one-shot not DP}
\end{figure}

In light of the above counterexample, one can think of the situation where the parameterized policy contains every locally minimum control policy of DP (see Definition \ref{def: local min DP}). It turns out that if such a situation is possible, given a convex parameter space, each strict local minimizer of the one-shot optimization is a local minimizer of DP under the following assumptions.

\begin{assumption}\label{full-row rank}
    Given a local minimizer of the one-shot optimization $\pi$, let $(x_0^*,\dots,x_n^*)$ be the associated state sequence. Then, for all $k\in\{0,\dots,n-1\}$, the $M \times m$ matrix $[f_0(x_{k}^*) \ f_1(x_{k}^*) \ \dots \ f_m(x_{k}^*)]$ has a full row rank.
\end{assumption}

\begin{assumption}\label{det theta large}
    Assume that $A \subseteq \cap_{k=1}^n \mu_{\Theta}(x_k^*)$, where $\mu_{\Theta}(x_k^*)$ is the image of $\Theta$ through $\mu_{\theta}(x_k^*): \Theta \rightarrow A$.
\end{assumption}

\begin{lemma}\label{lemma:every loc min policy contain}
Assume that $\Theta$ is convex. Consider a strict local minimizer of the one-shot optimization $\pi=(\theta_0^*,\dots,\theta_{n-1}^*)$. Suppose that the parameterized policy defined by Definition \ref{def: linear combination} satisfies Assumptions \ref{full-row rank} and \ref{det theta large}. If the parameterized policy class contains every locally minimum control policy of DP and at least one of the locally minimum control policies satisfies $\inf_{x\in \mathbb{R}^N} \epsilon_k^* (x) >0$ for all $k\in \{0,\dots,n-1\}$, then $\pi$ is a local minimizer of DP.
\end{lemma}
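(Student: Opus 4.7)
The plan is to upgrade the strict one-shot local minimality of $\pi$ to a local minimality of DP by passing through the non-parameterized locally minimum control policy machinery of Section \ref{sec:det} and then re-parameterizing.

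First, I would show that the induced input sequence $(u_0^*,\dots,u_{n-1}^*)$ with $u_k^* := \mu_{\theta_k^*}(x_k^*)$ is a strict local minimizer of the (non-parameterized) one-shot problem \eqref{eq:P2}. The tools are Assumption \ref{full-row rank} (by which $\theta \mapsto \mu_\theta(x_k^*)$ is a submersion at $\theta_k^*$ and hence admits a smooth local right inverse) together with Assumption \ref{det theta large} (which guarantees that every feasible nearby input can be realized through $\Theta$). Combining these with the convexity of $\Theta$ should allow me to lift any would-be input-space competitor of $(u_0^*,\dots,u_{n-1}^*)$, step by step and subject to the forward dynamics, to a nearby parameter-space competitor of $\pi$; strict local minimality in \eqref{eq:DP2} then transfers to strict local minimality in \eqref{eq:P2}.

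Next, I would apply Theorem \ref{thm: one-shot to DP} to this strict minimizer of \eqref{eq:P2} to obtain a locally minimum control policy $\hat\pi=(\hat\pi_0,\dots,\hat\pi_{n-1})$ of DP with $\hat\pi_k(x_k^*)=u_k^*$. Since the parameterized class contains every locally minimum control policy of DP, one has $\hat\pi_k=\mu_{\hat\theta_k}$ for some $\hat\theta_k\in\Theta$, and therefore $\mu_{\hat\theta_k}(x_k^*)=\mu_{\theta_k^*}(x_k^*)$. Lemma \ref{thetaconvex} (the uniqueness statement for the linear-combination parameterization under a convex $\Theta$) then forces $\hat\theta_k=\theta_k^*$ for every $k$, so $\pi$ itself parameterizes the locally minimum control policy $\hat\pi$; in particular the action-space condition \eqref{def: action space} holds at $\pi$ for every $x\in\mathbb{R}^N$.

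Finally, I would upgrade this action-space condition to the parameter-space local minimality demanded by Definition \ref{def: detparam local min DP} via Proposition \ref{prop: action and parameter}. The second hypothesis, namely that at least one locally minimum control policy satisfies $\inf_{x\in\mathbb{R}^N}\epsilon_k^*(x)>0$, is used precisely here: exploiting the freedom in Theorem \ref{thm: one-shot to DP} to choose how $\hat\pi_k$ is extended outside a neighborhood of $x_k^*$, one may take $\hat\pi$ to be that ``nice'' locally minimum control policy, after which Lemma \ref{thetaconvex} still identifies it with $\pi$. Proposition \ref{prop: action and parameter} then certifies that $\pi$ is a local minimizer of DP. The hardest step will be the initial lifting: the discussion after Proposition \ref{overparam} shows that a strict local minimizer of \eqref{eq:DP2} in the overparameterized regime must have every $\theta_k^*$ on the boundary of $\Theta$, so the submersion-type argument has to be paired carefully with the convexity of $\Theta$ and Assumption \ref{det theta large} to ensure the lifted perturbations remain in $\Theta$; a secondary delicate point is aligning the $\hat\pi$ produced by Theorem \ref{thm: one-shot to DP} with the ``nice'' policy, which the flexibility of that theorem's construction combined with Lemma \ref{thetaconvex} is designed to handle.
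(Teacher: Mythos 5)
Your overall architecture diverges from the paper's: you try to push the strict local minimality of $\pi$ down to the \emph{non-parameterized} one-shot problem \eqref{eq:P2}, invoke Theorem \ref{thm: one-shot to DP} to manufacture a locally minimum control policy, and then pull back to parameters via Lemma \ref{thetaconvex} and Proposition \ref{prop: action and parameter}. The paper never leaves parameter space in this way: it perturbs one coordinate $\theta_k$ at a time (holding $\theta_0^*,\dots,\theta_{k-1}^*$ fixed), which keeps the realized state at step $k$ equal to $x_k^*$ \emph{exactly}, derives $Q_k^{\pi}(x_k^*,\mu_{\theta_k^*}(x_k^*)) \leq Q_k^{\pi}(x_k^*,\mu_{\tilde\theta}(x_k^*))$ for $\tilde\theta$ near $\theta_k^*$, converts this parameter-ball statement into an action-ball statement at the single point $x_k^*$ using Assumptions \ref{full-row rank}--\ref{det theta large} together with linearity and the convexity of $\Theta$ (the singular-value bound gives the radius $\tfrac{\epsilon_k^*}{2}\sigma_k^*$), and then patches this pointwise local minimality at $x_k^*$ onto the ``nice'' policy $\phi_k$ everywhere else before applying Lemma \ref{thetaconvex} and Proposition \ref{prop: action and parameter}.

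The genuine gap in your plan is the first step. To show that $(u_0^*,\dots,u_{n-1}^*)$ is a strict local minimizer of \eqref{eq:P2} you must lift an \emph{arbitrary joint} input perturbation $(\tilde u_0,\dots,\tilde u_{n-1})$ to nearby parameters, which requires solving $\mu_{\tilde\theta_k}(\tilde x_k)=\tilde u_k$ at the \emph{perturbed} states $\tilde x_k$ generated by the earlier perturbed inputs. But Assumption \ref{full-row rank} (full row rank of $F_k^*$) and Assumption \ref{det theta large} (surjectivity of $\mu_{\Theta}(\cdot)$ onto $A$) are stated only at the nominal states $x_k^*$; at $\tilde x_k\neq x_k^*$ you have neither the surjectivity onto $A$ nor the anchor points $\theta_v\in\Theta$ used in the convex-combination argument, and since $\theta_k^*$ typically lies on the boundary of $\Theta$ (as the discussion after Proposition \ref{overparam} forces for strict minimizers), the ``smooth local right inverse'' of your submersion argument has no reason to land inside $\Theta$. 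The paper's coordinate-wise perturbation makes this entire difficulty vanish because the state at step $k$ never moves. A secondary, smaller issue is your appeal to ``flexibility'' in Theorem \ref{thm: one-shot to DP}: its construction extends the policy outside $B(x_k^*,\delta_k)$ by global minimizers of $Q^{\hat\pi}_k(x_k,\cdot)$, not by an arbitrary prescribed locally minimum policy, so aligning $\hat\pi$ with the nice policy $\phi$ is not something that theorem delivers; you would have to reprove the patching construction, at which point you have essentially reconstructed the paper's argument. I recommend abandoning the detour through \eqref{eq:P2} and Theorem \ref{thm: one-shot to DP} and arguing coordinate-wise in parameter space directly.
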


\begin{proof}
Let $(x_0^*,\dots,x_n^*)$ be the state sequence associated with $\pi$. Recall that $J_0^{\pi}(x_0)=\sum_{i=0}^{k-1}c_i(x_i^*, \mu_{\theta_i^*}(x_i^*)) + Q_k^\pi (x_k^*, \mu_{\theta_k^*}(x_k^*))$. One can fix all parameters except $\theta_k^*$ to derive that $J_0^{\pi}(x_0) -  J_0^{\pi'}(x_0)=Q_k^{\pi}(x,\mu_{\theta_k^*}(x)) - Q_k^{\pi}(x,\mu_{\theta_k'}(x))$, where $\pi'=(\theta_0^*, \dots, \theta_{k-1}^*, \theta_k', \theta_{k+1}^*, \dots, \theta_{n-1}^*)$. Thus, a local minimizer of the one-shot optimization $\pi$ implies that for all $k\in\{0,\dots,n-1\}$, there exists $\epsilon_k^*>0$ such that 
\begin{equation}\label{one-shot to Q}
    Q_k^{\pi}(x_k^*,\mu_{\theta_k^*}(x_k^*)) \leq Q_k^{\pi}(x_k^*,\mu_{\Tilde{\theta}}(x_k^*)), \quad\forall \Tilde{\theta} \in B(\theta_k^*,\epsilon_k^\ast)\cap \Theta.
\end{equation}

Now, let $F_k^*$ be the $M \times m$ matrix $[f_0(x_{k}^*) \ f_1(x_{k}^*) \ \dots \ f_m(x_{k}^*)]$, where its smallest singular value is denoted by $\sigma_k^*$. Given an arbitrary direction $v\in \mathbb{R}^M$, one can take a point $u_v$ that is farthest from $\mu_{\theta_k^*}(x_k^*)$ in the direction of $v$ since the action space $A$ is compact. Let $\delta_v$ be the value that achieves $u_v=\mu_{\theta_k^*}(x_k^*)+\delta_v v$. By Assumption \ref{det theta large}, there exists $\theta_v \in \Theta$ satisfying $u_v=\mu_{\theta_v}(x_k^*)$, and by Definition \ref{def: linear combination}, $\mu_{\theta_v}(x_k^*)$ is defined by $F_k^* \theta_v$.


\noindent\underline{\textit{Case 1}} $\delta_v = 0$: There does not exist $\delta>0$ such that $\mu_{\theta_k^*}(x_k^*)+\delta v \in A$.

\noindent\underline{\textit{Case 2}} $\delta_v > 0$ and $\theta_v \in B(\theta_k^*, \epsilon_k^*)$: Due to the linearity of policy and the convexity of $\Theta$, there exists $\theta_{\delta} \in B(\theta_k^*, \epsilon_k^*) \cap \Theta$ such that $\mu_{\theta_{\delta}}(x_k^*)=\mu_{\theta_k^*}(x_k^*)+\delta v$ for all $0<\delta<\delta_v$.

\noindent\underline{\textit{Case 3}} $\delta_v > 0$ and $\theta_v \notin B(\theta_k^*, \epsilon_k^*)$: Consider $\mu_{\theta_k^*}(x_k^*)+ \frac{\epsilon_k^*}{2\|\theta_v -\theta_k^*\|} (\mu_{\theta_v}(x_k^*)-\mu_{\theta_k^*}(x_k^*))$. The corresponding parameter is definitely in $B(\theta_k^*, \epsilon_k^*)\cap\Theta$ by the linearity of policy and the convexity of $\Theta$. Then, as in Case 2, there exists $\theta_{\delta} \in B(\theta_k^*, \epsilon_k^*) \cap \Theta$ such that $\mu_{\theta_{\delta}}(x_k^*)=\mu_{\theta_k^*}(x_k^*)+\delta v$ for all $0<\delta<\frac{\epsilon_k^*}{2\|\theta_v -\theta_k^*\|} \delta_v$.

In Case 3, notice that $\|\frac{\epsilon_k^*}{2\|\theta_v -\theta_k^*\|} (\mu_{\theta_v}(x_k^*)-\mu_{\theta_k^*}(x_k^*)) \| =\frac{\epsilon_k^*}{2} \cdot \frac{\|F_k^* (\theta_v - \theta_k^*)\|}{\|\theta_v -\theta_k^*\|} \geq \frac{\epsilon_k^*}{2} \sigma_k^* > 0$, where the last inequality is from Assumption \ref{full-row rank} and the second last inequality is from the basic property of singular value \cite{strang1988linear}. 

Considering all three cases, $\Tilde{u} \in B(\mu_{\theta_k^*}(x_k^*), \frac{\epsilon_k^*}{2} \sigma_k^*) \cap A$ implies that at least one corresponding parameter for each $\Tilde{u}$ is in $B(\theta_k^*, \epsilon_k^*) \cap \Theta$. Thus, one can notice that (\ref{one-shot to Q}) implies 
\begin{equation}\label{DP to Q}
    Q_k^{\pi}(x_k^*,\mu_{\theta_k^*}(x_k^*)) \leq Q_k^{\pi}(x_k^*, \Tilde{u}), \quad\forall \Tilde{u} \in B(\mu_{\theta_k^*}(x_k^*), \frac{\epsilon_k^*}{2} \sigma_k^*) \cap A.
\end{equation}

We select an arbitrary locally minimum control policy $\phi=(\phi_{0},\dots, \phi_{n-1})$ with the property that $\inf_{x\in \mathbb{R}^N} \epsilon_k^* (x) >0$. Let $\Tilde{\pi}=(\Tilde{\pi}_{0},\dots, \Tilde{\pi}_{n-1})$ be the policy such that for all $k\in\{0,\dots,n-1\}$,
\[
\Tilde{\pi}_{k}(x_k)=\begin{dcases*}
\mu_{\theta_k^*}(x_k^*), & if $x_k =x_k^*$, \\
\phi_{k} (x_k), & otherwise.
\end{dcases*}
\]

Such $\Tilde{\pi}$ is also a locally minimum control policy by (\ref{DP to Q}). This implies that the parameterized policy contains $\Tilde{\pi}$. Also, $\Tilde{\pi}$ achieves the same input sequence $(\mu_{\theta_0^*}(x_0^*), \dots, \mu_{\theta_{n-1}^*}(x_{n-1}^*))$ as the strict local minimizer $\pi$. Therefore, by Lemma \ref{thetaconvex}, $\mu_{\theta_k^*}=\Tilde{\pi}_{k}$ holds. Since $\inf_{x\in\mathbb{R}^N} \epsilon_k^* (x)$ induced by $\phi_{k}$ is greater than 0, $\inf_{x\in\mathbb{R}^N} \epsilon_k^* (x)$ induced by $\Tilde{\pi}_{k}$ is also greater than 0. Then, by Proposition \ref{prop: action and parameter}, $\pi=(\theta_0^*,\dots,\theta_{n-1}^*)$ is a local minimizer of DP.
\end{proof}

\begin{remark}
    With a given set of parameters $(\theta_0^*, \dots, \theta_{n-1}^*)$, there exists only one associated state sequence for the deterministic parameterized problem. Assumptions \ref{full-row rank} and \ref{det theta large} are thus only required for that specific state sequence, where one can readily check the assumptions in advance with known dynamics, parameter space, action space, and policy class.
    Assumption \ref{full-row rank} is a type of regularity condition, which can be regarded as the extension of an overparameterized policy.
    Assumption \ref{det theta large} implies that $\Theta$ should be large enough to contain relevant parameters to cover the action space $A$.
    Since $\mu_\theta(x)$ is designed to be in $A$ by Definition \ref{def: param policy}, Assumption \ref{det theta large} is equivalent to saying that $A=\mu_{\Theta}(x_0^*)=\dots=\mu_{\Theta}(x_n^*)$. 

    
\end{remark}

Meanwhile, suppose that there exist two different locally minimum control policies in a set of non-zero measure, meaning that at some step $k$, $\pi_1(x)\neq \pi_2(x)$ for all $x\in I$ where $I$ is a set of non-zero measure. Then, there exists an infinite number of locally minimum control policies made up of $\pi_1$ and $\pi_2$ by alternating between $\pi_1(x)$ and $\pi_2(x)$ along $x\in I$, and the parameterized policy class cannot contain all these policies. We now present the situation that the parameterized policy contains every locally minimum control policy of DP. 

\begin{theorem}\label{thm: detparam one-shot to DP}

Assume that $\Theta$ is convex. Consider a strict local minimizer of the one-shot optimization $\pi=(\theta_0^*,\dots,\theta_{n-1}^*)$. Suppose that the parameterized policy defined by Definition \ref{def: linear combination} satisfies Assumptions \ref{det theta large} and \ref{full-row rank}. If there exists only a single locally minimum control policy of DP $\phi=(\phi_{0},\dots, \phi_{n-1})$ and the parameterized policy class contains $\phi$, then $\pi$ is a local minimizer of DP.
\end{theorem}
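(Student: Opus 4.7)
The plan is to mirror the argument in the proof of Lemma~\ref{lemma:every loc min policy contain}, substituting the uniqueness of $\phi$ for the pointwise lower bound $\inf_{x}\epsilon_k^*(x)>0$ used there. The proof naturally splits into two stages: first identifying $\mu_{\theta_k^*}$ with $\phi_k$, and then verifying the DP local minimality in parameter space.

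For the first stage, I would repeat the three-case singular-value argument from Lemma~\ref{lemma:every loc min policy contain}. Strict local minimality of $\pi$ in the one-shot sense gives inequality~(\ref{one-shot to Q}) at each trajectory point $x_k^*$. Convexity of $\Theta$, Assumption~\ref{full-row rank} (which furnishes the smallest singular value $\sigma_k^*>0$), and Assumption~\ref{det theta large} (which ensures every direction in $A$ is reachable via parameters in $\Theta$) then promote this to~(\ref{DP to Q}), so that $\mu_{\theta_k^*}(x_k^*)$ is a local minimizer of $Q_k^{\pi}(x_k^*,\cdot)$ over $A$. Next, define the hybrid policy $\tilde{\pi}_k(x)=\mu_{\theta_k^*}(x_k^*)$ for $x=x_k^*$ and $\tilde{\pi}_k(x)=\phi_k(x)$ otherwise; combining~(\ref{DP to Q}) at the trajectory with local minimality of $\phi$ elsewhere shows $\tilde{\pi}$ is itself a locally minimum control policy of DP. By the uniqueness hypothesis, $\tilde{\pi}=\phi$, so $\phi_k(x_k^*)=\mu_{\theta_k^*}(x_k^*)$ for every $k$. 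Writing $\phi_k=\mu_{\theta_k^\phi}$ (valid since $\phi$ lies in the parameterized class), the parameter vector $(\theta_0^\phi,\dots,\theta_{n-1}^\phi)$ reproduces exactly the trajectory inputs of $\pi$, so Lemma~\ref{thetaconvex} applied to the strict local minimizer $\pi$ forces $\theta_k^\phi=\theta_k^*$, giving $\phi_k=\mu_{\theta_k^*}$ identically on $\mathbb{R}^N$.

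For the second stage, $\mu_{\theta_k^*}\equiv\phi_k$ implies $Q_k^{\pi}\equiv Q_k^{\phi}$, so at every $x$, $\mu_{\theta_k^*}(x)$ is a local minimizer of $Q_k^{\pi}(x,\cdot)$ over $A$. I would then apply the continuity property in Definition~\ref{def: param policy}: a sufficiently small ball around $\theta_k^*$ in $\Theta$ produces a uniformly small action-space perturbation, which places $\mu_{\tilde\theta}(x)$ inside the local basin of $\mu_{\theta_k^*}(x)$ and delivers the parameter-space inequality required by Definition~\ref{def: detparam local min DP}.

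The main obstacle will be the second stage: passing from the pointwise action-space minimality inherited from $\phi$ (with radius $\bar\epsilon(x)>0$ that may degenerate as $x$ varies) to the $x$-uniform parameter-space radius that Definition~\ref{def: detparam local min DP} appears to require, as used in the proof of Theorem~\ref{thm: detparam loc min}. The only available lever is uniqueness of $\phi$: I would argue that a degeneracy $\inf_{x}\bar\epsilon(x)=0$ could be exploited to build a second locally minimum control policy through a pointwise swap at a state where the basin collapses, contradicting the uniqueness hypothesis and therefore forcing $\inf_{x}\bar\epsilon(x)>0$, after which Proposition~\ref{prop: action and parameter} closes the argument.
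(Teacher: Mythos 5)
Your proposal is correct and follows essentially the same route as the paper: the paper's proof simply verifies that the single-policy hypothesis supplies the two preconditions of Lemma~\ref{lemma:every loc min policy contain} (the parameterized class contains every locally minimum control policy, and $\inf_{x}\epsilon_k^*(x)>0$), and the proof of that lemma is exactly your first stage (the three-case singular-value argument, the hybrid policy, Lemma~\ref{thetaconvex}, then Proposition~\ref{prop: action and parameter}). For your second stage, the uniformity issue dissolves more directly than your collapsing-infimum contradiction suggests: uniqueness of $\phi$ forces $\phi_k(x)$ to be the \emph{unique} local minimizer of $Q_k^{\phi'}(x,\cdot)$ over the compact set $A$ at every single $x$ (any state admitting a second local minimizer would permit a pointwise swap producing a second locally minimum control policy), hence it is the global minimizer there and $\epsilon_k^*(x)=\infty$ uniformly, which is precisely how the paper discharges the $\inf_{x}\epsilon_k^*(x)>0$ precondition.
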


\begin{proof}
    Let $\phi' = (\theta_0', \dots, \theta_{n-1}')$ be the parameters associated with $\phi$. For all $k\in\{0,\dots,n-1\}$ and for all $x\in\mathbb{R}^N$, $\phi_{k}(x)$ is the unique local minimizer of $Q_k^{\phi'}(x,u)$. Having no spurious local minima implies that $\inf_{x\in\mathbb{R}^N} \epsilon_k^* (x) =\infty >0$. Moreover, the parameterized policy class contains every locally minimum control policy of DP. Since these facts satisfy the preconditions of Lemma \ref{lemma:every loc min policy contain}, this completes the proof.
\end{proof}

Considering both Theorem \ref{thm: detparam loc min} and \ref{thm: detparam one-shot to DP}, one can conclude that under the assumptions of Theorem \ref{thm: detparam one-shot to DP}, a local minimizer of DP is \textit{equivalent} to a local minimizer of the one-shot optimization.

\section{Stochastic Problem under a \\ Parameterized policy}
\label{sec:stoparam}

\subsection{Problem Formulation}\label{prob-form stoparam}

In this section, we will show that the results obtained for the deterministic problem under a parameterized policy also hold for the stochastic problem under a parameterized policy. Since we now take the expectation of the sum of the costs over the trajectories, the issue of strictness, as in Proposition \ref{overparam}, does not take place. Before presenting the theorems, we first define the problem setting in the stochastic case.

\begin{definition}
    Given a complete probability space $(\Omega, \mathcal{F}, \mathbb{P})$, let $x_0$ be a $\mathcal{F}$-measurable, $\mathbb{R}^N$-valued random variable, which has an initial distribution $\rho$. Also, let $w_k$ be an $\mathcal{F}$-measurable, $\mathbb{R}^W$-valued random variable for all $k\in\{0,\dots,n-1\}$ such that $x_0, w_0,\dots,w_{n-1}$ are mutually independent. The state transition is now governed by the dynamics $f_i : \mathbb{R}^N \times A \times \mathbb{R}^W \rightarrow \mathbb{R}^N$, $i=0,\dots,n-1$. The dynamics are again defined to be at least twice continuously differentiable.
\end{definition}

Now, we modify the deterministic problems under a parameterized policy, \textit{i.e.}, (\ref{eq:DP1}), (\ref{eq:DP2}), and (\ref{eq:DDP}), to a discrete-time finite-horizon stochastic optimal control problem under a parameterized policy:
\begin{equation}\label{eq: SP1}
\tag{SP1}
\begin{alignedat}{2}
\min_{\theta_0,\dots,\theta_{n-1}\in \Theta} \quad & \mathbb{E}_{x_0,w_0,\dots,w_{n-1}}\biggr[\sum_{i=0}^{n-1} c_i(x_i,\mu_{\theta_i}(x_i))+ c_n(x_n)\biggr], \\
\nonumber \text{where} \quad & x_{i+1}=f_i(x_i,\mu_{\theta_i}(x_i), w_i), \quad i=0,\dots, n-1.
\end{alignedat}
\end{equation}
 Notice that for stochastic problems, $x_0$ may not be given as a point, but has an initial distribution $\rho$. Afterwards, $x_{i+1}$ is a random variable induced by $(x_0, w_0,\dots,w_{i})$.
\begin{definition} \label{def: stoparam Q and cost-to-go}
Given a control policy parameter vector $\pi=(\theta_0,\dots,\theta_{n-1})$,
the associated Q-functions $Q^\pi_k(\cdot,\cdot)$ and cost-to-go functions $J^\pi_k(\cdot)$ under the policy $\pi$ are defined in a backward way from the time step $n-1$ to the time step $0$ through the following recursion:
\vspace{-3pt}
\begin{align*}
&J^\pi_n(x)=c_n(x),\\
&Q^\pi_k(x, \mu_{\theta}(x))=\mathbb{E}_{w_k}[c_k(x,\mu_{\theta}(x))+J^\pi_{k+1}(f_k(x,\mu_{\theta}(x), w_k))],\\ &\hspace{63mm} k=0,\dots,n-1, \\
&J^\pi_k(x)=Q^\pi_k(x,\mu_{\theta_k}(x)),\quad  k=0,\dots,n-1.
\end{align*}
\end{definition}
\vspace{3pt}
Then, the one-shot optimization problem \eqref{eq: SP1} can be equivalently written as
\begin{equation}\label{eq:SP2}
\tag{SP2}
\begin{split}
\min_{\pi=(\theta_0,\dots,\theta_{n-1}) \in \Theta^n} \quad & \mathbb{E}_{x_0}[J_0^\pi (x_0)],
\end{split}
\end{equation}
as long as the cost functions $c_i$, $i=0,\dots,n-1$, are uniformly bounded, due to the product measure Theorem and Fubini's Theorem \cite{bertsekas1996stochastic}. In the remainder of the paper, we assume that the two problems are equivalent.

The DP approach can be written as the following backward recursion:
\begin{equation}\label{eq:SDP}
\tag{SP3}
\begin{split}
J_n(x)&=c_n(x),\\
J_k(x)&=\min_{\theta \in \Theta}\{\mathbb{E}_{w_k}[c_k(x,\mu_{\theta}(x))+J_{k+1}(f_k(x,\mu_{\theta}(x),w_k))]\},\\ &\hspace{55mm} k=0,\dots,n-1.
\end{split}
\end{equation}

\begin{definition}[local minimizer of the one-shot optimization]\label{def: stoparam local min OS}
A control policy parameter vector $\pi=(\theta_0^*,\dots,\theta_{n-1}^*)$ is said to be a local minimizer of the one-shot optimization if there exists $\epsilon>0$ such that
\[ \mathbb{E}_{x_0}[J_0^{\pi}(x_0)] \leq  \mathbb{E}_{x_0}[J_0^{\Tilde{\pi}}(x_0)]\]
for all $\Tilde{\pi}=(\Tilde{\theta}_0, \dots, \Tilde{\theta}_{n-1}) \in (B(\theta_0^*,\epsilon)\cap \Theta) \times \dots \times (B(\theta_{n-1}^*,\epsilon)\cap \Theta)$. 
\end{definition}


\begin{definition}[Stationary point of the one-shot optimization]\label{def: stoparam stationary point of the one-shot optimization}
A control policy parameter vector $\pi=(\theta_0^*,\dots,\theta_{n-1}^*)$ is said to be a stationary point of the one-shot optimization if for all $k \in \{0,\dots,n-1\}$, it holds that $-\nabla_{\theta_k}\mathbb{E}_{x_0}[J_0^{\pi}(x_0)] \in \mathcal{N}_{\Theta}(\theta_k^*)$.
\end{definition}

While the one-shot method aims for optimizing the expectation over all steps in the stochastic dynamics, DP studies optimizing Q-function at every step both in the deterministic and stochastic cases. Since we have modified the definition of Q-function to incorporate the expectation, it is natural that the definition of a local minimizer (stationary point) of DP is exactly the same as Definition \ref{def: detparam local min DP} (\ref{def: detparam stationary point of DP}). 

\subsection{From DP to one-shot optimization}\label{stoparam DPOS}

In this subsection, we will show that, in the stochastic case with a parameterized policy, each local minimizer (stationary point) of DP directly corresponds to some local minimizer (stationary point) of the one-shot optimization, just as in the deterministic case. However, it turns out that for the stationary points, the policy needs to be continuously differentiable with respect to both states and parameters since the expectation is over all trajectories rather than a single trajectory.

\begin{theorem}\label{thm: stoparam loc min}
Consider a local minimizer of DP $\pi = (\theta_0^*, ..., \theta_{n-1}^*)$. Then, $\pi$ is also a local minimizer of the one-shot optimization.
\end{theorem}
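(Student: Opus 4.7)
The plan is to mirror the structure of the proof of Theorem \ref{thm: detparam loc min} for the deterministic parameterized case, but carry an outer expectation along through the telescoping argument. The crucial observation is that Definition \ref{def: detparam local min DP} requires $\theta_k^*$ to be a local minimizer of $Q_k^{\pi}(x,\mu_{(\cdot)}(x))$ \emph{for every} $x\in\mathbb{R}^N$ with a single common radius $\epsilon_k^*$ (not depending on $x$). Hence, even though in the stochastic problem the intermediate state $\tilde{x}_k$ produced by the perturbed policy is a random variable, the inequality between $Q$-values holds pathwise for every realization of $\tilde{x}_k$, and the corresponding inequality survives after taking expectations.

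Concretely, I would set $\epsilon=\min\{\epsilon_0^*,\dots,\epsilon_{n-1}^*\}$ and take any $\tilde{\pi}=(\tilde\theta_0,\dots,\tilde\theta_{n-1})$ with $\tilde\theta_k\in B(\theta_k^*,\epsilon)\cap\Theta$. Starting from $\mathbb{E}_{x_0}[J_0^\pi(x_0)] = \mathbb{E}_{x_0}[Q_0^\pi(x_0,\mu_{\theta_0^*}(x_0))]$, apply the DP local-minimality inequality pointwise in $x_0$ to obtain $Q_0^\pi(x_0,\mu_{\theta_0^*}(x_0))\le Q_0^\pi(x_0,\mu_{\tilde\theta_0}(x_0))$, and then expand the right-hand side using Definition \ref{def: stoparam Q and cost-to-go} as $\mathbb{E}_{w_0}[c_0(x_0,\mu_{\tilde\theta_0}(x_0))+J_1^\pi(\tilde x_1)]$ with $\tilde x_1=f_0(x_0,\mu_{\tilde\theta_0}(x_0),w_0)$. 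Taking the outer $\mathbb{E}_{x_0}$ and invoking Fubini yields
\[
\mathbb{E}_{x_0}[J_0^\pi(x_0)] \le \mathbb{E}_{x_0,w_0}\bigl[c_0(x_0,\mu_{\tilde\theta_0}(x_0))+J_1^\pi(\tilde x_1)\bigr].
\]
Next, rewrite $J_1^\pi(\tilde x_1)=Q_1^\pi(\tilde x_1,\mu_{\theta_1^*}(\tilde x_1))$ and again use DP local minimality pointwise, now at the state $\tilde x_1$ which is a measurable function of $(x_0,w_0)$; monotonicity of expectation gives the analogous upper bound with $\mu_{\tilde\theta_1}$ in place of $\mu_{\theta_1^*}$. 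Iterating this procedure $n$ times telescopes the Q-functions into the sum of stage costs and the terminal cost along the perturbed trajectory, producing $\mathbb{E}_{x_0}[J_0^\pi(x_0)]\le \mathbb{E}_{x_0}[J_0^{\tilde\pi}(x_0)]$, which is exactly Definition \ref{def: stoparam local min OS}.

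The main technical obstacle is the measurability and integrability bookkeeping that allows the pointwise inequalities to pass under the expectation and the exchange of $\mathbb{E}_{x_0}$ with the nested $\mathbb{E}_{w_k}$. This is handled by the standing uniform boundedness of the cost functions (already invoked in Section \ref{prob-form stoparam} to justify the equivalence of \eqref{eq: SP1} and \eqref{eq:SP2}), together with the continuity of $\mu_\theta$ in $\theta$ from Definition \ref{def: param policy} and the smoothness of $f_i$; Fubini's theorem and the product-measure theorem then legitimize each interchange. Compared with the deterministic parameterized version, no new structural idea is required — the only subtlety is verifying that the common-radius feature of Definition \ref{def: detparam local min DP} is genuinely used so that the pointwise inequality survives for every sample of the random state.
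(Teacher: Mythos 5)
Your proposal is correct and follows essentially the same route as the paper's proof: the paper likewise telescopes the Q-function inequalities inside the nested expectations $\mathbb{E}_{x_0},\mathbb{E}_{w_0},\dots$, relying on the fact that the radius $\epsilon_k^\ast$ in Definition \ref{def: detparam local min DP} is taken in parameter space and is uniform over $x$, and invokes the assumed equivalence of \eqref{eq: SP1} and \eqref{eq:SP2} (Fubini/product measure) to justify the final rearrangement before choosing $\epsilon=\min_k\epsilon_k^\ast$. Your added emphasis on the measurability/integrability bookkeeping is a faithful elaboration of what the paper leaves implicit, not a different argument.
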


\begin{proof}

Since $(\theta_0^*, ..., \theta_{n-1}^*)$ is a local minimizer of DP, there exist $\epsilon_0^*, \dots, \epsilon_{n-1}^* > 0$ such that 
\begin{align*}
\mathbb{E}_{x_0}[J_0^\pi(x_0)]&=\mathbb{E}_{x_0}[Q_0^\pi (x_0, \mu_{\theta_0^*}(x_0))] \leq \mathbb{E}_{x_0}[Q_0^\pi (x_0, \mu_{\tilde{\theta}_0}(x_0))] \\ 
&= \mathbb{E}_{x_0}[c_0(x_0, \mu_{\tilde{\theta}_0}(x_0)) + \mathbb{E}_{w_0}[Q_1^\pi (\tilde{x}_1, \mu_{\theta_1^*}(\tilde{x}_1))]] \\ &\hspace{34mm} (\tilde{x}_1=f_0(x_0, \mu_{\tilde{\theta}_0}(x_0),w_0)) \\ 
&\leq \mathbb{E}_{x_0}[c_0(x_0, \mu_{\tilde{\theta}_0}(x_0)) + \mathbb{E}_{w_0}[Q_1^\pi (\tilde{x}_1, \mu_{\tilde{\theta}_1}(\tilde{x}_1))]] \\ 
&= \mathbb{E}_{x_0}[c_0(x_0, \mu_{\tilde{\theta}_0}(x_0)) +  \mathbb{E}_{w_0}[c_1(\tilde{x}_1, \mu_{\tilde{\theta}_1}(\tilde{x}_1)) \\ & \hspace{33.5mm} +\mathbb{E}_{w_1}[Q_2^\pi (\tilde{x}_2, \mu_{\theta_2^*}(\tilde{x}_2))]]] \\ &\hspace{34mm} (\tilde{x}_2=f_1(\tilde{x}_1, \mu_{\tilde{\theta}_1}(\tilde{x}_1),w_1)) \\ 
&\leq \dots \leq \mathbb{E}_{x_0,w_0,\dots,w_{n-1}}[J_0^{\Tilde{\pi}}(x_0)]
\end{align*}
where $\Tilde{\pi} = (\Tilde{\theta}_0, \dots, \Tilde{\theta}_{n-1})\in (B(\theta_0^*, \epsilon_0^*)\cap\Theta)\times \dots \times (B(\theta_{n-1}^*, \epsilon_{n-1}^*)\cap\Theta)$. The last inequality is due to the assumption that the two problems (\ref{eq: SP1}) and (\ref{eq:SP2}) are equivalent.

Choose $\epsilon=\min\{\epsilon_0^*, \dots \epsilon_{n-1}^* \}$. Then, $J_0^\pi(x_0)\leq J_0^{\Tilde{\pi}}(x_0)$ for all $\Tilde{\pi} = (\Tilde{\theta}_0, \dots, \Tilde{\theta}_{n-1})\in (B(\theta_0^*, \epsilon)\cap\Theta)\times \dots \times (B(\theta_{n-1}^*, \epsilon)\cap\Theta)$. This completes the proof.
\end{proof}

Now, let $\mathbf{D}_x^{\mu}(\theta)$ be the Jacobian matrix of $\mu_{(\cdot)}(x)$ at point $\theta$, $\mathbf{D}_{k}^{f,x} (x, \mu_{\theta}(x), w)$ be the Jacobian matrix of the function $f_k(\cdot, \mu_{\theta}(\cdot), w)$ at point $x$ while viewing $\theta$ as a constant, and similarly $\mathbf{D}_k^{f,\theta} (x, \mu_{\theta}(x), w)$ be the Jacobian matrix of $f_k(x, \mu_{(\cdot)}(x), w)$ at point $\theta$ while viewing $x$ as a constant.


\begin{theorem} \label{thm: stoparam stationary}
Consider a stationary point of DP $\pi=(\theta_0^*,\dots,\theta_{n-1}^*)$.
If for all $k\in\{0,\dots,n-1\}$,
\begin{enumerate}
\item $\mu_{\theta_k}(x_k^*)$ is continuously differentiable with respect to $\theta_k$ in a neighborhood of $\theta^*_k$ for all $x_k^* \in \mathbb{R}^N$;
\item $\mu_{\theta_k^*}(x_k)$ is continuously differentiable with respect to $x_k$ everywhere,
\end{enumerate}
then $\pi$ is a stationary point of the one-shot optimization.
\end{theorem}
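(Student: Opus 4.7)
The plan is to generalize the argument of Theorem~\ref{thm: detparam stationary} to the stochastic setting by identifying $\nabla_{\theta_k}\mathbb{E}_{x_0}[J_0^\pi(x_0)]$ as the expectation of the DP-level gradient $\nabla_{\theta_k} Q_k^\pi(x_k,\mu_{\theta_k}(x_k))$ over the random state $x_k$ at time $k$ induced by the policy $\pi$. The key structural identity is that, using the tower property of conditional expectation and the fact that $x_0,x_1,\dots,x_k$ together with the actions $\mu_{\theta_i^*}(x_i)$ for $i<k$ depend only on $\theta_0^*,\dots,\theta_{k-1}^*$ (and not on $\theta_k$), one can write
\[
\mathbb{E}_{x_0}[J_0^\pi(x_0)]=\mathbb{E}\Bigl[\sum_{i=0}^{k-1} c_i(x_i,\mu_{\theta_i^*}(x_i)) + J_k^\pi(x_k)\Bigr],
\]
where the expectation is over $x_0,w_0,\dots,w_{k-1}$ under the dynamics driven by $\theta_0^*,\dots,\theta_{k-1}^*$.

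Differentiating with respect to $\theta_k$, the sum over $i<k$ vanishes because none of its terms depend on $\theta_k$. For the remaining $J_k^\pi(x_k)$ term, I would justify interchanging differentiation and expectation via dominated convergence, using the $C^2$ hypothesis on $f_i$ and $c_i$, the compactness of $A$, and the continuous differentiability assumptions on $\mu$. This produces
\[
\nabla_{\theta_k}\mathbb{E}_{x_0}[J_0^\pi(x_0)] = \mathbb{E}\bigl[\nabla_{\theta_k}J_k^\pi(x_k)\bigr].
\]
The role of assumption~(2) enters here: to ensure that $J_{k+1}^\pi,\dots,J_{n-1}^\pi$ are continuously differentiable in the state argument (a backward induction using $J_i^\pi(x)=\mathbb{E}_{w_i}[c_i(x,\mu_{\theta_i^*}(x))+J_{i+1}^\pi(f_i(x,\mu_{\theta_i^*}(x),w_i))]$ requires $\mu_{\theta_i^*}$ to be $C^1$ in $x$), which in turn makes $Q_k^\pi(x,u)$ continuously differentiable in $u$. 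Since $Q_k^\pi$ as a function does not depend on $\theta_k$, assumption~(1) and the chain rule then give, for each realization of $x_k$,
\[
\nabla_{\theta_k}J_k^\pi(x_k) = \mathbf{D}_{x_k}^\mu(\theta_k^*)^T\nabla_u Q_k^\pi(x_k,\mu_{\theta_k^*}(x_k)) = \nabla_{\theta_k}Q_k^\pi(x_k,\mu_{\theta_k}(x_k))\big|_{\theta_k=\theta_k^*}.
\]

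Finally, by the DP stationarity of $\pi$ we have $-\nabla_{\theta_k}Q_k^\pi(x,\mu_{\theta_k^*}(x))\in\mathcal{N}_\Theta(\theta_k^*)$ for every $x\in\mathbb{R}^N$. Since the normal cone $\mathcal{N}_\Theta(\theta_k^*)$ is a closed convex set (indeed a closed convex cone), the expectation of a random vector that lies in it almost surely must itself lie in it. Hence $-\nabla_{\theta_k}\mathbb{E}_{x_0}[J_0^\pi(x_0)]\in\mathcal{N}_\Theta(\theta_k^*)$ for each $k$, which is exactly the claim that $\pi$ is a stationary point of the one-shot optimization.

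The main technical obstacle is rigorously justifying the interchange of $\nabla_{\theta_k}$ and the expectation over the trajectory, because $\theta_k$ appears both through the stage cost at time $k$ and implicitly through the future states $x_{k+1},\dots,x_n$ via the dynamics. This is where assumption~(2) is essential: it lets the backward recursion defining $J_k^\pi$ produce a $C^1$ (in $x$) cost-to-go, which together with assumption~(1) and the compactness of $A$ provides a uniform bound on the partial derivatives needed to apply dominated convergence. Once this interchange is secured, the passage from the DP stationarity condition to the one-shot stationarity condition is a clean consequence of convexity and closedness of the normal cone.
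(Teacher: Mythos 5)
Your proposal is correct and follows essentially the same route as the paper's proof: a backward induction establishing $C^1$ regularity of $J_k^\pi$ in the state (this is where assumption (2) enters), the chain rule via assumption (1) to identify $\nabla_{\theta_k}\mathbb{E}_{x_0}[J_0^\pi(x_0)]$ with the expectation of $\nabla_{\theta_k}Q_k^\pi(x_k,\mu_{\theta_k^*}(x_k))$, and then the closedness and convexity of $\mathcal{N}_\Theta(\theta_k^*)$ to pull the normal-cone membership through the expectation. The only cosmetic difference is that you invoke the standard fact that the expectation of an a.s.\ member of a closed convex set lies in that set, whereas the paper proves this instance of it explicitly by a separating-hyperplane contradiction.
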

\begin{proof}

First, we will apply induction to prove that for every $k \in \{1, ..., n\}, J_k^\pi (x)$ is continuously differentiable. For the base step, $J_n^\pi(x)=c_n(x)$ is continuously differentiable. For the induction step, observe that
\begin{align*}
&\nabla_{x}J_k^\pi (x) = \nabla_{x} [Q_k^\pi (x, \mu_{\theta_k^*}(x))] \\&= \nabla_{x} [c_k (x, \mu_{\theta_k^*}(x)) + \int_{\Omega} J_{k+1}^\pi (f_k (x, \mu_{\theta_k^*}(x), w_k)) \mathrm{d}p(w_k)] \\
 &= \nabla_{x} [c_k (x, \mu_{\theta_k^*}(x))] +  \int_{\Omega} \nabla_{x} [J_{k+1}^\pi (f_k (x, \mu_{\theta_k^*}(x), w_k)) ]\mathrm{d}p(w_k) \\
 &= \nabla_{x} [c_k (x, \mu_{\theta_k^*}(x))] \\ &\hspace{1mm}+  \int_{\Omega} \mathbf{D}_{k}^{f,x} (x, \mu_{\theta_k^*}(x), w_k)^T \nabla_{x} J_{k+1}^\pi (f_k (x, \mu_{\theta_k^*}(x), w_k)) \mathrm{d}p(w_k).
\end{align*}
This observation is based on the existence and continuity of the Jacobian matrix $\mathbf{D}_{k}^{f,x} (x, \mu_{\theta_k^*}(x), w_k)$ due to assumption 2, continuity of $\nabla_{x} J_{k+1}^\pi (f_k (x, \mu_{\theta_k^*}(x), w_k))$ due to the induction step, and therefore the continuity of $\nabla_{x} [J_{k+1}^\pi (f_k (x, \mu_{\theta_k^*}(x), w_k)) ]$. This allows us to interchange integration and differentiation in the second equality by Leibniz's integration rule.

Now, for $k \in \{0, ..., n-1\}$, observe that 
\begin{align*}
& \nabla_{\theta_k} Q_k^\pi (x_k, \mu_{\theta_k^*}(x_k)) \\&= \nabla_{\theta_k} [c(x_k, \mu_{\theta_k^*}(x_k))+\int_{\Omega} J_{k+1}^\pi (f_k (x_k, \mu_{\theta_k^*}(x_k), w_k))\mathrm{d}p(w_k)] \\
&=\mathbf{D}_{x_k}^{\mu}(\theta_k^*)^T \nabla_{\mu} c(x_k, \mu_{\theta_k^*}(x_k)) \\&+ \int_{\Omega} \mathbf{D}_k^{f,\theta} (x_k, \mu_{\theta_k^*}(x_k), w_k)^T \nabla_x J_{k+1}^\pi (f_k (x_k, \mu_{\theta_k^*}(x_k), w_k))  \mathrm{d}p(w_k),
\end{align*}
which is valid because for $k \in \{1,..., n\}, J_k^\pi (x)$ is continuously differentiable and assumption 1 implies the existence and continuity of $\mathbf{D}_{x_k}^{\mu}(\theta_k^*)$ and $\mathbf{D}_k^{f,\theta} (x_k, \mu_{\theta_k^*}(x_k), w_k)$. Thus, $\nabla_{\theta_k} Q_k^\pi (x_k, \mu_{\theta_k}(x_k))$ is continuous in a neighborhood of $\theta_k^*$ for all $x_k\in\mathbb{R}^N$. Then, for $k \in \{0, ..., n-1\}$, 
\begin{align*}
&\nabla_{\theta_k} \mathbb{E}_{x_0}[J_0^{\pi}(x_0)] = \\ &\int_{\mathbb{R}^N}\int_{\Omega}\dots\int_{\Omega} \nabla_{\theta_k} Q_k^\pi (x_k, \mu_{\theta_k^*}(x_k))\mathrm{d}p(w_{k-1})...\mathrm{d}p(w_{0})\mathrm{d}p(x_{0}), 
\end{align*}
Now, note that $\mathcal{N}_{\Theta}(\theta_k^*)$ is nonempty, closed, and convex \cite{rockafellar2009variational}. By the definition of a stationary point of DP, we have $-\nabla_{\theta_k} Q_k^\pi (x_k, \mu_{\theta_k^*}(x_k)) \in \mathcal{N}_{\Theta}(\theta_k^*)$ for all $x_k \in \mathbb{R}^N$. To prove by contradiction, assume that $-\nabla_{\theta_k} \mathbb{E}_{x_0}[J_0^{\pi}(x_0)] \notin \mathcal{N}_{\Theta}(\theta_k^*)$. Let $a_k$ denote the dimension of $\theta_k$. By the separating hyperplane theorem, there exist $p \in \mathbb{R}^{a_k}$ and $\alpha \in \mathbb{R}$ such that 
\[-p^T \nabla_{\theta_k} Q_k^\pi (x_k, \mu_{\theta_k^*}(x_k)) < \alpha < -p^T \nabla_{\theta_k} \mathbb{E}_{x_0}[J_0^{\pi}(x_0)], 
\]
for all $x_k \in \mathbb{R}^N.$ Then, observe that
\begin{align*}
&-p^T \nabla_{\theta_k} \mathbb{E}_{x_0}[J_0^{\pi}(x_0)] \\ &= -p^T \int_{\mathbb{R}^N}\int_{\Omega}\dots\int_{\Omega} \nabla_{\theta_k} Q_k^\pi (x_k, \mu_{\theta_k^*}(x_k))\mathrm{d}p(w_{k-1})...
\mathrm{d}p(x_{0}) \\
&= \int_{\mathbb{R}^N}\int_{\Omega}\dots\int_{\Omega} -p^T \nabla_{\theta_k} Q_k^\pi (x_k, \mu_{\theta_k^*}(x_k))\mathrm{d}p(w_{k-1})...
\mathrm{d}p(x_{0}) \\
&< \int_{\mathbb{R}^N}\int_{\Omega}\dots\int_{\Omega} -p^T \nabla_{\theta_k} \mathbb{E}_{x_0}[J_0^{\pi}(x_0)]\mathrm{d}p(w_{k-1})...
\mathrm{d}p(x_{0}) \\ &= -p^T \nabla_{\theta_k} \mathbb{E}_{x_0}[J_0^{\pi}(x_0)],
\end{align*}
which is a contradiction. Thus, $-\nabla_{\theta_k} \mathbb{E}_{x_0}[J_0^{\pi}(x_0)] \in \mathcal{N}_{\Theta}(\theta_k^*)$,
which shows that $\pi=(\theta_0^*, ..., \theta_{n-1}^*)$ is a stationary point of the one-shot optimization. 
\end{proof}


\subsection{From one-shot optimization to DP}\label{stoparam OSDP}

In this subsection, we first show that a local minimizer (stationary point) of the one-shot optimization does not necessarily correspond to a local minimizer (stationary point) of DP; \textit{i.e.}, the converse of Theorem \ref{thm: stoparam loc min} and that of Theorem \ref{thm: stoparam stationary} do not hold. 
Then, the optimization landscape of the one-shot optimization is more complex than that of DP. In other words, if the one-shot problem has a \textit{low} complexity, so does the DP problem.

To provide a counterexample, we use the basic parameterized policy that follows Definition \ref{def: linear combination}: $\mu_{\theta_k}(x)=a_k x+b_k$, where $\theta_k = (a_k,b_k)$. Consider the 2-step problem
\begin{align*}
&x_0 = 0, \ c_0(x, \mu_{\theta_0}(x)) =0, \\ &f_0(x,\mu_{\theta_0}(x),w_0) = x+a_0 x+b_0+w_0, \\ &c_1(x,\mu_{\theta_1}(x)) = \frac{1}{4}(a_1 x+b_1)^4 -\frac{1}{2}(a_1 x+b_1)^2 + x^2, \\ &f_1(x,\mu_{\theta_1}(x),w_1) = x+a_1 x+b_1+w_1, \\ &c_2(x,\mu_{\theta_2}(x)) = 0 \ \text{where} \ w_0,w_1 \overset{\mathrm{iid}}{\sim} \ \textit{Uniform}\biggr(-\sqrt{\frac{5}{3}}, \sqrt{\frac{5}{3}}\biggr), 
\end{align*}
where $\Theta=[-2, 2]\times[-2,2]$. The associated one-shot problem can be written as
\begin{align*}
\min_{-2 \leq b_0,a_1,b_1 \leq 2} \  \mathbb{E}_{w_0} \biggr[ &\frac{1}{4}\{a_1 (b_0+w_0)+b_1\}^4 \\ &-\frac{1}{2}\{a_1 (b_0+w_0)+b_1\}^2 + (b_0+w_0)^2 \biggr] 
\end{align*}

It turns out that there are 9 stationary points of the one-shot optimization in the interior of $\Theta$: $(b_0, a_1, b_1)=(0,\pm 0.7071, \pm 0.4082), (0, \pm 1, 0), (0, 0, \pm 1), (0,0,0)$. Among them, there are 4 strict local minimizers of the one-shot optimization: $(0, \pm 1, 0)$, $(0, 0, \pm 1)$. On the other hand, considering $\nabla_{\theta_1} c_1(x,\mu_{\theta_1}(x)) = [g(x, a_1, b_1)x, g(x, a_1, b_1)]$ where $g(x, a_1, b_1)=(a_1 x+b_1)(a_1 x+b_1 - 1)(a_1 x+b_1 + 1)$, there are 3 stationary points of DP: $(0, 0, \pm 1), (0,0,0)$ and 2 strict local minimizers of DP: $(0, 0, \pm 1)$. This verifies that a local minimizer (stationary point) of DP is indeed a local minimizer (stationary point) of the one-shot optimization but not the other way around. Fig.~\ref{fig:stochastic Landscape} shows the landscape of the one-shot optimization when $b_0$ is fixed to 0. 


\begin{figure}[t]
    \centering
    \includegraphics[width=43mm]{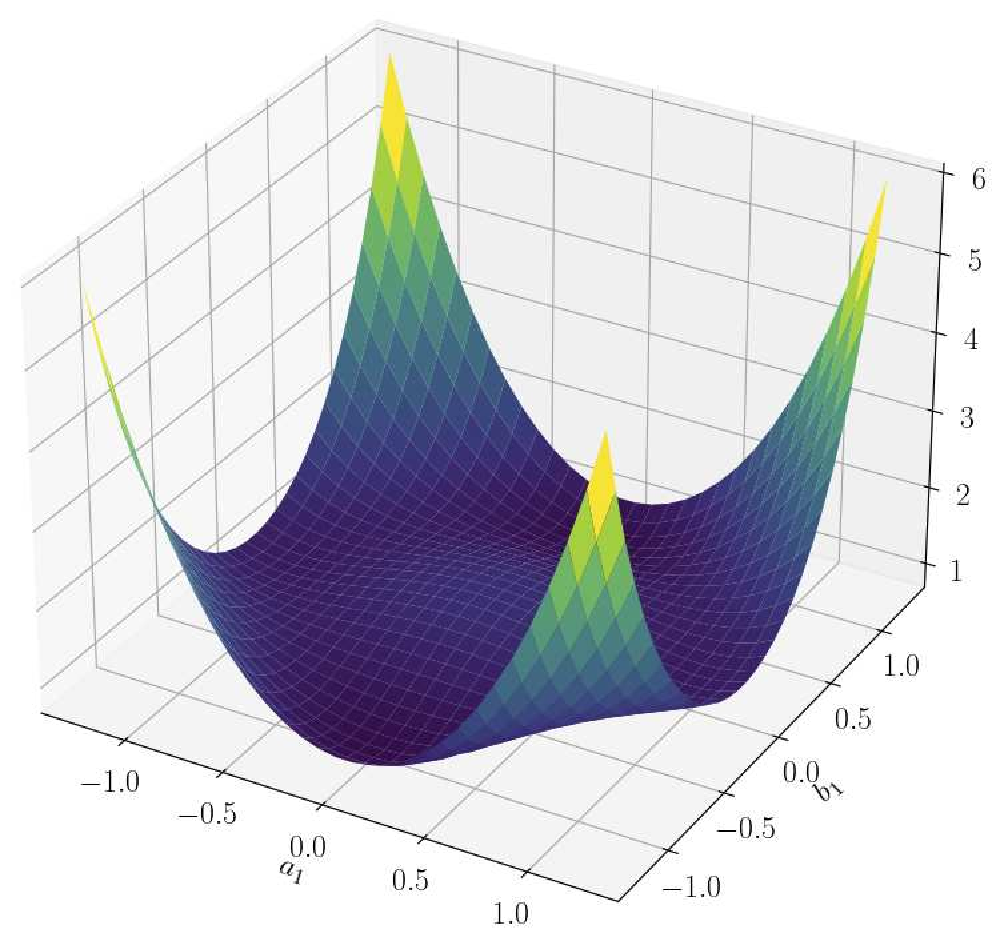}
    \caption{Landscape of the one-shot optimization for a stochastic parameterized problem: $b_0$ is fixed to $0$ in the figure. $(a_1,b_1)=(\pm 1, 0)$, $(0, \pm 1)$ are strict local minimizers of the one-shot optimization but only $(0, \pm 1)$ is a local minimizer of DP.}
    \label{fig:stochastic Landscape}
\end{figure}

Now, we present the specific case that a local minimizer of the one-shot optimization implies a local minimizer of DP, similar to Theorem \ref{thm: detparam one-shot to DP}. The preconditions of theorems are similar in the sense that they both consider the case when DP has a very low complexity in the sense that there is no spurious local minima at each step of DP. The main difference between the theorems comes from whether we consider a single trajectory or the expectation over infinitely many trajectories. We consider this in the view of stationarity. (see Definitions \ref{def: stationary control policy of DP}, \ref{def: detparam stationary point of the one-shot optimization}, and \ref{def: detparam stationary point of DP}) 
 
\begin{assumption}\label{assumption single stationary}
There exists only a single stationary control policy $\phi=(\phi_0,\dots,\phi_{n-1})$ which is also a locally minimum control policy in the interior of $A$ for all $x \in \mathbb{R}^N$. The parameterized policy defined by Definition \ref{def: linear combination} contains $\phi$, with the associated parameters denoted by $\phi' = (\theta_0', \dots, \theta_{n-1}')$.
\end{assumption}


\begin{theorem}\label{thm: stoparam one-shot to DP}
Assume that Assumption \ref{assumption single stationary} holds.
Consider a local minimizer of the one-shot optimization $\pi=(\theta_0^*, \dots, \theta_{n-1}^*)$ in the interior of $\Theta^n$. If $x_k^*$ is a continuous random variable for all $k\in\{0,\dots,n-1\}$, where $(x_0^*, \dots,x_n^*)$ is the random state process associated with $\pi$, then $\pi$ is a local minimizer of DP.
\end{theorem}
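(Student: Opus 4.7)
The plan is to show that under Assumption~\ref{assumption single stationary}, the given local minimizer $\pi$ must coincide with the parameterization $\phi'=(\theta_0',\dots,\theta_{n-1}')$ of the unique stationary control policy $\phi$. Once this is established, $\phi$ being a locally minimum control policy in the interior of $A$ lets Proposition~\ref{prop: action and parameter} upgrade $\pi=\phi'$ to a local minimizer of DP.

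First, since $\pi$ is a local minimizer of the one-shot in the interior of $\Theta^n$, it is a stationary point of the one-shot, so $\nabla_{\theta_k}\mathbb{E}_{x_0}[J_0^\pi(x_0)]=0$ for every $k$. Repeating the Leibniz-rule computation from the proof of Theorem~\ref{thm: stoparam stationary} and exploiting the linearity $\mu_{\theta_k}(x)=F_k(x)\theta_k$, where $F_k(x):=[f_1(x)\ \cdots\ f_m(x)]$, provided by Definition~\ref{def: linear combination}, this reads
\[
\mathbb{E}\bigl[F_k(x_k^*)^{\top}\nabla_u Q_k^\pi\bigl(x_k^*,\mu_{\theta_k^*}(x_k^*)\bigr)\bigr]=0,\quad k=0,\dots,n-1.
\]

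Second, I would argue by backward induction on $k$ that $\mu_{\theta_k^*}(x)=\phi_k(x)$ for every $x$ in the support of $x_k^*$. The base step $k=n-1$: note that $Q_{n-1}^\pi$ does not depend on $\theta_{n-1}$, so $\theta_{n-1}^*$ is stationary for the finite-dimensional problem $\min_{\theta\in\Theta}\mathbb{E}[Q_{n-1}^\pi(x_{n-1}^*,\mu_\theta(x_{n-1}^*))]$. Because $x_{n-1}^*$ is a continuous random variable and the basis functions are linearly independent on sets of non-zero measure (Definition~\ref{def: linear combination}, Remark~\ref{discrete continuous}), I would show that the integrated identity forces $\nabla_u Q_{n-1}^\pi(x,\mu_{\theta_{n-1}^*}(x))=0$ for $\rho_{n-1}$-a.e.\ $x$: otherwise the resulting function would deliver a \emph{second} stationary control policy inside the parameterized class, contradicting the uniqueness clause of Assumption~\ref{assumption single stationary}. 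Uniqueness of $\phi$ then identifies $\mu_{\theta_{n-1}^*}$ with $\phi_{n-1}$ on that support. The inductive step is identical: once $\mu_{\theta_j^*}=\phi_j$ on the trajectory support for all $j>k$, one has $Q_k^\pi=Q_k^\phi$ along the trajectory, and the same contradiction argument applied at step $k$ yields $\mu_{\theta_k^*}=\phi_k$ on the support of $x_k^*$.

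Third, the pointwise equality $\mu_{\theta_k^*}=\phi_k$ on the support of $x_k^*$, combined with linear independence of the basis functions (playing the role of Lemma~\ref{thetaconvex} from the deterministic argument), forces $\theta_k^*=\phi_k'$ in the parameter space. Since $\phi_k(x)\in\mathrm{int}(A)$ for every $x$, continuity of $Q_k^\phi$ and compactness of $A$ yield $\inf_{x\in\mathbb{R}^N}\epsilon_k^*(x)>0$ in Definition~\ref{def: local min DP}, so Proposition~\ref{prop: action and parameter} concludes that $\pi=\phi'$ is a local minimizer of DP.

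The principal obstacle is the backward-induction step, namely passing from the expectation-zero identity to pointwise vanishing of $h(x):=\nabla_u Q_k^\pi(x,\mu_{\theta_k^*}(x))$. The identity alone constrains $h$ only in an $m$-dimensional subspace of $L^2(\rho_k;\mathbb{R}^M)$, so the uniqueness of $\phi$ must be invoked to exclude an alternative stationary control policy witnessed by a non-vanishing $h$. Continuity of $x_k^*$ is essential here: it guarantees that any hypothetical violation on a set of positive Lebesgue measure carries positive $\rho_k$-measure, preventing the linear-independence condition of Definition~\ref{def: linear combination} from being vacated on a measure-zero exception set (Remark~\ref{discrete continuous}). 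Assembling these two ingredients into a formal contradiction is the technical heart of the proof.
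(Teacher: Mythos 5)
Your overall skeleton matches the paper's: reduce the interior local minimizer to a stationary point of the one-shot problem, run a backward induction to show $\pi=\phi'$, and then invoke Proposition~\ref{prop: action and parameter} (with $\inf_x\epsilon_k^*(x)>0$ coming from the absence of spurious local minima) to conclude. However, the step you yourself flag as ``the technical heart'' is genuinely missing, and the mechanism you sketch for closing it would not work. You propose to derive $\nabla_u Q_{n-1}^\pi(x,\mu_{\theta_{n-1}^*}(x))=0$ for a.e.\ $x$ by contradiction, arguing that a non-vanishing gradient ``would deliver a second stationary control policy inside the parameterized class.'' It would not: a stationary control policy is one along which $\nabla_u Q$ \emph{vanishes}, so a non-vanishing $h(x)=\nabla_u Q_{n-1}^\pi(x,\mu_{\theta_{n-1}^*}(x))$ simply certifies that $\mu_{\theta_{n-1}^*}$ is \emph{not} stationary; no new stationary policy is produced and no contradiction with the uniqueness clause of Assumption~\ref{assumption single stationary} arises. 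As you correctly observe, the single identity $\mathbb{E}[F_{n-1}(x_{n-1}^*)^{\top}h(x_{n-1}^*)]=0$ only constrains $h$ in an $m$-dimensional subspace of $L^2$, so something more is needed, and your proposal does not supply it.

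The paper closes this gap with a different device. Using the uniqueness of the interior stationary point of $Q_{n-1}^{\phi'}(x,\cdot)$, it writes each component of $\nabla_u Q_{n-1}^\pi(x,\mu_\theta(x))$ in the factored form $\bigl(u_j-\sum_i s_i f_{ij}(x)\bigr)g_j(x,\theta)$ with $g_j\ge 0$ and $g_j>0$ whenever $\theta\neq\theta_{n-1}'$. It then contracts the $m$-dimensional stationarity identity with the specific direction $d-s=\theta_{n-1}^*-\theta_{n-1}'$ (multiplying the $k$th component by $d_k-s_k$ and summing), which, thanks to the factorization, produces
\[
\sum_{j=1}^M \mathbb{E}\Bigl[\bigl(\textstyle\sum_{i}(d_i-s_i)f_{ij}(x_{n-1}^*)\bigr)^2\,g_j(x_{n-1}^*,\theta_{n-1}^*)\Bigr]=0,
\]
an expectation of a nonnegative quantity. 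If $\theta_{n-1}^*\neq\theta_{n-1}'$ the weights $g_j$ are strictly positive, forcing $\sum_i(d_i-s_i)f_{ij}(x_{n-1}^*)=0$ almost surely, which contradicts the linear independence of the basis functions precisely because $x_{n-1}^*$ is a continuous random variable. This sum-of-squares contraction is the ingredient your argument lacks; without it (or an equivalent substitute), the passage from the integrated first-order condition to the pointwise identification $\mu_{\theta_k^*}=\phi_k$ does not go through.
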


\begin{proof}
Since $\phi$ is a single locally minimum control policy, $Q_k^{\phi'}(x,u)$ has no spurious local minima for all $k\in \{0,\dots,n-1\}$. Thus, by Proposition \ref{prop: action and parameter}, the corresponding $\inf_{x\in\mathbb{R}^N} \epsilon_k^* (x)=\infty > 0$ makes $\phi'$ be a local minimizer of DP. 

Consider a stationary point of the one-shot optimization $\pi=(\theta_0^*, \dots, \theta_{n-1}^*)$ in the interior of $\Theta^n$.
We will now prove by a backward induction that $\pi$ should always be $\phi'$; \textit{i.e.}, $\theta_{k}^*=\theta_{k}'$ for all $k\in\{0,\dots,n-1\}$. 

For the base step, at step $n-1$, since the parameterized policy contains $\phi_{n-1}$, it can be expressed as $\phi_{n-1} (x) = \sum_{i=1}^{m} s_i f_i(x)$, where $f_i : \mathbb{R}^N \rightarrow \mathbb{R}^M$, $i=1,\dots,m$, $f_i(x)=[f_{i1}(x), \dots, f_{iM}(x)]^T$ and $\theta_{n-1}'=(s_1,\dots,s_m)\in \Theta$. Notice that $ Q_{n-1}^{\phi'} (x,\mu_{\theta_{n-1}'}(x))= Q_{n-1}^{\pi} (x,\mu_{\theta_{n-1}'}(x))$ since $\theta_{n-1}'$ is the final parameter of the whole system to determine the control inputs and the state transition. Now, observe that
\[
\nabla_{\mu}Q_{n-1}^{\phi'} (x,\mu_{\theta_{n-1}'}(x)) = \nabla_{\mu}Q_{n-1}^{\pi} (x,\mu_{\theta_{n-1}'}(x)) = 0
\]
and $\mu_{\theta_{n-1}'}(x)$ is the unique solution for $\nabla_{\mu}Q_{n-1}^{\phi'} (x,\cdot) = 0$ since $\mu_{\theta_{n-1}'}(x)$ is the unique stationary point located within the interior of $A$ due to Assumption \ref{assumption single stationary}.
This yields the following expression with $\mu_{\theta}(x)=(u_1,\dots,u_M)^T$:
\begin{align*}
\nabla_{\mu}Q_{n-1}^{\phi'} (x,\mu_{\theta}(x)) &= \nabla_{\mu}Q_{n-1}^{\pi} (x,\mu_{\theta}(x)) \\ &=
\begin{bmatrix}
    (u_1 - \sum_{i=1}^m s_i f_{i1}(x))\cdot g_1 (x,\theta) \\
    \vdots \\
    (u_M - \sum_{i=1}^m s_i f_{iM}(x))\cdot g_M (x,\theta)
\end{bmatrix},
\end{align*}
where $g_j(x,\theta), j=1,\dots,M$, are nonnegative at $\theta = \theta_{n-1}'$ and positive at all the other points since $\phi'$ is a local minimizer of DP that yields the unique stationary control policy $\phi$.

Now, let $\mu_{\theta_{n-1}^*}(x)$ be $\sum_{i=1}^{m} d_i f_i(x)$ where $\theta_{n-1}^* = (d_1,\dots,d_m)$. According to the chain rule, it holds that 
\begin{align}\label{chain rule}
\nonumber &\nabla_{\theta_{n-1}}Q_{n-1}^{\pi}(x, \mu_{\theta_{n-1}^*}(x))^T \\  &= \nabla_{\mu}Q_{n-1}^{\pi}(x, \mu_{\theta_{n-1}^*}(x))^T \mathbf{D}^{\mu}_x(\theta_{n-1}^*) \\ \nonumber &= 
\begin{bmatrix}
    (\sum_{i=1}^m (d_i-s_i) f_{i1}(x))\cdot g_1 (x,\theta_{n-1}^*) \\
    \vdots \\
    (\sum_{i=1}^m (d_i-s_i) f_{iM}(x))\cdot g_M (x,\theta_{n-1}^*)
\end{bmatrix}^T 
\begin{bmatrix}
    f_{11}(x)  \cdots  f_{m1}(x) \\
    \vdots \quad \quad \ddots \quad \quad \vdots \\
    f_{1M}(x) \cdots f_{mM}(x)
\end{bmatrix}
\end{align}
Note that $\nabla_{\theta_{n-1}}\mathbb{E}_{x_0}[J_0^\pi (x_0)]=0$ since $\pi$ is a stationary point of the one-shot optimization in the interior of $\Theta^n$. It follows that
\begin{align}\label{expectation zero}
\nonumber &\nabla_{\theta_{n-1}}\mathbb{E}_{x_0}[J_0^\pi (x_0)] \\  &= \nabla_{\theta_{n-1}}\mathbb{E}_{x_0, w_0, \dots, w_{n-2}}[Q_{n-1}^\pi (x_{n-1}^*, \mu_{\theta_{n-1}^*}(x_{n-1}^*))] \\ \nonumber &= \mathbb{E}_{x_0, w_0, \dots, w_{n-2}}[\nabla_{\theta_{n-1}}Q_{n-1}^\pi (x_{n-1}^*, \mu_{\theta_{n-1}^*}(x_{n-1}^*))]=0.
\end{align}
The second equality comes from $Q_{n-1}^\pi (x,\mu_{\theta}(x))$ being differentiable with respect to the parameters due to the linearity of the policy defined by Definition \ref{def: linear combination}. Now, we substitute (\ref{chain rule}) into (\ref{expectation zero}) to derive an $m$-dimensional vector equation, and multiply $(d_k-s_k)$ with $k^{\text{th}}$ component as follows:
\vspace{-1mm}
\begin{align*}
\mathbb{E}_{x_0, w_0, \dots, w_{n-2}}\biggr[&\sum_{j=1}^M (d_k-s_k) f_{kj}(x_{n-1}^*)\cdot \\ &\sum_{i=1}^m (d_i-s_i) f_{ij}(x_{n-1}^*) \cdot g_j (x_{n-1}^*, \theta_{n-1}^*)\biggr]=0, 
\end{align*}
for all $k=1,\dots,m.$ We sum up the $m$ equations and rearrange the terms to derive the following equation:
\begin{align}\label{squared}
 \nonumber \sum_{j=1}^M \mathbb{E}_{x_0, w_0, \dots, w_{n-2}}\biggr[ \biggr(\sum_{i=1}^m (d_i-&s_i) f_{ij}(x_{n-1}^*)\biggr)^2 \cdot \\ &g_j (x_{n-1}^*, \theta_{n-1}^*)\biggr]=0.
\end{align}
The term inside the expectation is always nonnegative regardless of the distribution of $x_0, w_0, \dots, w_{n-2}$. Now, suppose that $\theta_{n-1}^*\neq \theta_{n-1}'$; \textit{i.e.}, $d_i \neq s_i$ for some $i \in \{1,\dots, m\}$. Then, we have $g_j (\cdot, \theta_{n-1}^*)$ to be strictly positive. As a result, for (\ref{squared}) to be satisfied, $\sum_{i=1}^m (d_i-s_i) f_{ij}(x_{n-1}^*)$ should be $0$ for every $j \in \{1,\dots,M\}$ for all possible values of $x_{n-1}$. Recall from Remark \ref{discrete continuous} to note that it is impossible to satisfy (\ref{squared}) since $x_{n-1}^*$ is a continuous random variable and the policy is defined by Definition \ref{def: linear combination}, \textit{i.e.}, a linear combination of some independent basis functions. Thus, $d_i = s_i$ for all $i \in \{1,\dots, m\}$, which means $\theta_{n-1}^*=\theta_{n-1}'$. 

For the induction step, assume that $\theta_{k}^*=\theta_{k}'$. Again, $ Q_{k}^{\phi'} (x,\mu_{\theta_{k}}(x))= Q_{k}^{\pi} (x,\mu_{\theta_{k}}(x))$ holds, and thus one can apply the same logic as the base step to obtain $\theta_{k-1}^*=\theta_{k-1}'$. 

Thus, $\pi=\phi'$ holds, which implies that $\pi$ is a local minimizer of DP since $\phi'$ is a local minimizer of DP.
\end{proof}

\begin{remark}
    The results of both Theorems \ref{thm: detparam one-shot to DP} and \ref{thm: stoparam one-shot to DP} state that a local minimizer of the one-shot optimization is indeed a local minimizer of DP under the common assumption that no spurious local minima exist at each step of DP. By taking the contrapositive, one can observe that under such a condition, there is at most one local minimizer of the one-shot optimization, indicating that no spurious local minima exist; \textit{i.e.}, if DP has a \textit{very low} complexity, the same holds for the one-shot problem.
\end{remark}

\begin{remark}
    To determine the form of $\nabla_{\mu}Q_{n-1}^{\phi'} (x,u)$, it was necessary to argue that $\mu_{\theta_{n-1}'}(x)$ should be the unique solution for $\nabla_{\mu}Q_{n-1}^{\phi'} (x,u) = 0$. For this to be true, there should certainly be only a single stationary control policy, which necessitates Assumption \ref{assumption single stationary}.
    In fact, it may be difficult to satisfy the precondition that a single stationary control policy should be in the interior of $A$ for all $x \in \mathbb{R}^N$. Instead, we can relax this condition to apply only within the domain of $x$; \textit{i.e.}, the set of values that at least one of the states $x_0, x_1, \dots, x_n$ can take. For example, if the state space is finite, satisfying the condition becomes relatively straightforward.
\end{remark}

\begin{remark}
    The challenging part of a backward induction in the proof arises from the fact that the state at step $k$ is fully determined by the previous steps but one cannot look at the previous steps in the backward induction. Thus, the main idea of the proof leverages equation (\ref{squared}), which incurs the fact that $\theta_k^* = \theta_k'$ regardless of the distribution of $x_0, w_0, \dots, w_{k-1}$. Thus, we only need the assumption that there is a single stationary control policy with respect to the given distribution of $x_0, w_0, \dots, w_{n-1}$. This is a big improvement from the work \cite{bhandari2024global} (see Condition 4 of Section 5.4) in the sense that Condition 4 needs no sub-optimal stationary point with respect to any possible distribution. 
\end{remark}


\begin{figure}[t]
\centering {\includegraphics[width=45mm]{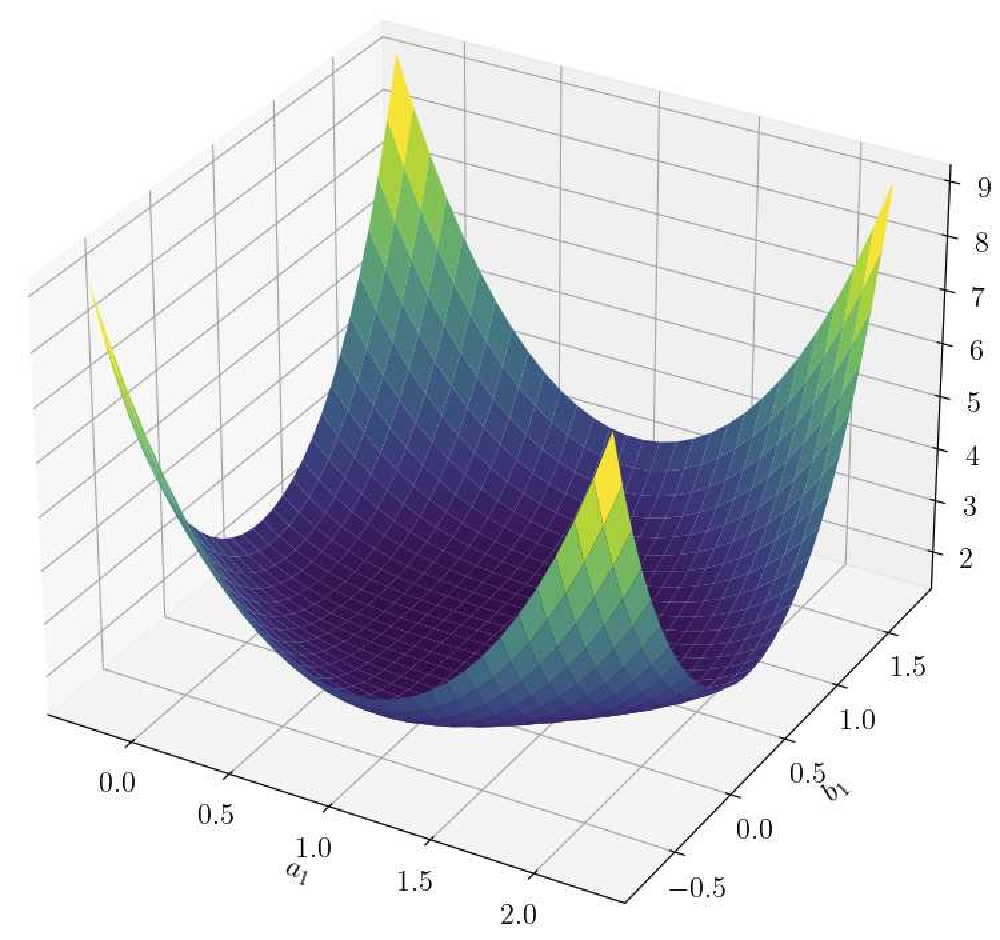}} \caption{Landscape of the one-shot optimization under the assumptions of Theorem \ref{thm: stoparam one-shot to DP}: $b_0$ is fixed to $0$ in the figure. $(a_1,b_1)=(1,0.5)$ is the only stationary point (local minimizer) of DP and also the only stationary point (local minimizer) of the one-shot optimization.}
\label{fig: equivalent stochastic Landscape}
\end{figure}


Now, we present the pictorial example of Theorem \ref{thm: stoparam one-shot to DP}. Consider the example of 2-step problem presented above in Fig.~\ref{fig:stochastic Landscape}, but modify $c_1(x,\mu_{\theta_1}(x))$ to $\frac{1}{4}(a_1 x+b_1-x-0.5)^4 + x^4$. The associated one-shot problem can be written as 
\[
\min_{-2 \leq b_0,a_1,b_1 \leq 2} \  \mathbb{E}_{w_0} \biggr[ \frac{1}{4}\{(a_1-1) (b_0+w_0)+b_1 - 0.5\}^4 + (b_0+w_0)^4 \biggr] 
\]

$(\pi_0, \pi_1)= (0, x+0.5)$ is the only locally minimum control policy, and the parameterized policy class contains this policy as $(b_0, a_1, b_1)=(0,1,0.5)$. Clearly, it is a local minimizer of DP. It turns out that the corresponding one-shot problem also has a single stationary point $(0,1,0.5)$, which is also a local minimizer of the one-shot optimization. Fig.~\ref{fig: equivalent stochastic Landscape} shows the landscape of the one-shot optimization when $b_0$ is fixed to $0$.

Considering both Theorems \ref{thm: stoparam loc min} and $\ref{thm: stoparam one-shot to DP}$, one can conclude that under
the assumptions of Theorem $\ref{thm: stoparam one-shot to DP}$, a local minimizer of DP is \textit{equivalent} to a local minimizer of
the one-shot optimization. 

\subsection{Numerical Experiments}

In this subsection, we will present a high-dimensional experiment on the classical linear quadratic regulator (LQR):
\begin{align*}
&f_k(x_k, u_k) = A_k x_k + B_k u_k, \quad k=0,\dots,n-1, \quad x_0 \sim \mathcal{D},\\ &c_k (x_k, u_k) = x_k^\top Q_k x_k + u_k^\top R_k u_k,  \quad k=0,\dots,n-1, \\& u_k = K_k x_k, \quad k=0,\dots,n-1, \quad 
c_n(x_n) = x_n^\top Q_n x_n,
\end{align*}
whose goal is to find the optimal parameters: $K_0, \dots, K_{n-1}$.
To solve the problem using DP, we use Xpress Optimizer v9.3.0 \cite{xpress}. To solve the problem in a one-shot fashion, we use Gurobi Optimizer v11.0.0 \cite{gurobi} with the tolerance of $10^{-4}$.

Let $K_{k, \text{DP}}^*$ and $K_{k, \text{OS}}^*$ denote an observed local solution of the $k^\text{th}$ step parameter $K_k$ obtained by DP and the one-shot problem, respectively. We aim to determine whether each local solution of DP corresponds to some local solutions of the one-shot problem, and vice versa. One can verify this by first solving DP or the one-shot problem and then providing its solution as the initial parameter values when solving its counterpart. This method is often referred to as ``warm start." We expect to observe unchanged values from the initial guess if there is indeed a correspondence. Let $K_{k, \text{DP}\to\text{OS}}^*$ ($K_{k, \text{OS}\to\text{DP}}^*$) denote the solution of $K_k$ obtained by the one-shot problem (DP) using warm start with DP (one-shot) solution as an initial guess.
The initial distribution $\mathcal{D}$ introduces the stochasticity to the system and induces the states to be continuous random variables, which obeys the assumption of Theorem \ref{thm: stoparam one-shot to DP}.

We perform 20 experiments for $x_k \in \mathbb{R}^3$ and $u_k \in \mathbb{R}^4$ with $n=30$.  We randomly generate $A_k$ and $B_k$, whose entries are all in $[-100, 100]$. We also generate $Q_k = QQ^T$ and $R_k = RR^T + 100I$, where all entries of $Q$ and $R$ are in $[-20, 20]$ and $I$ denotes the identity matrix.
$\mathcal{D}$ is the normal distribution with the expectation $20\mathbbm{1}$ and the variance $VV^T$, where $\mathbbm{1}$ denotes the vector of ones and all entries in $V$ are in $[-200, 200]$.
We consider two scenarios: (a) unconstrained and (b) constrained by the last-step (nonconvex) condition $K_{n-1}^T K_{n-1} \succeq 10000I$, where $\succeq$ denotes the Loewner partial ordering (roughly speaking, this condition ensures that the controller has a high gain).
Table \ref{DP oneshot lqr} shows whether the correspondence holds between the solutions of DP and the one-shot problem using warm start under the two scenarios, presenting the results of the average of 20 experiments.

It turns out that for both scenarios, one can observe that a solution of DP corresponds to each solution of one-shot problem since the one-shot solver directly identifies DP solution as a local solution of the one-shot problem without any numerical update. This implication supports the findings of Theorem \ref{thm: stoparam loc min}.

However, whether a one-shot solution implies some DP solutions depends on the problem setting. Our experiment for the unconstrained case shows that a solution of the one-shot problem indeed corresponds to that of DP, implying with the above result that a one-shot solution is equivalent to a DP solution. 
Previous studies have shown the global convergence of this one-shot problem by proving that LQR satisfies the gradient dominance property, even though the problem is generally nonconvex \cite{fazel2018lqr, hambly2021lqr}. Our general approach alternatively observes the DP counterparts: Since every DP sub-problem of LQR has no spurious local minima, our experiment implies that the one-shot LQR problem also has none of them and achieves the global convergence, which supports Theorem \ref{thm: stoparam one-shot to DP}.

On the other hand, the nonconvex constraint on $K_{n-1}$ creates spurious solutions for the $(n-1)^\text{th}$ DP, independent of whether the $(n-2)^\text{th}, \dots, 0^\text{th}$ DP steps have any spurious local minima. 
Our experiment for the constrained case shows that having spurious local minima at the $(n-1)^\text{th}$ DP propagates backward to $K_1$, where the solver fails to guarantee that an observed local minimum of the one-shot optimization corresponds to that of DP. This illustrates that the landscape of the one-shot problem has a higher complexity than its DP counterpart, which was also shown in Fig.~\ref{fig:deterministic 3D} and Fig.~\ref{fig:stochastic Landscape}. This result serves as a counterexample of the converse of Theorems \ref{thm: stoparam loc min} and \ref{thm: stoparam stationary}, and it implies that a single high-complexity DP step affects the landscape of the one-shot problem.

\begin{table}[t]
\begin{threeparttable}
\scriptsize
\centering
\caption{Relationship between DP and one-shot solutions of LQR}
\begin{tabular}{ |c|c|c| } 
 \hline
 \multirow{2}{*}{\backslashbox[45mm]{Numerical difference}{Scenario}} & \multirow{2}{*}{(a) Unconstrained}  & \multirow{2}{*}{(b) Constrained}  \\ && \\\hline\hline
 $\|K_{0,\text{DP}}^*-K_{0,\text{DP}\to\text{OS}}^*\|_F/\|K_{0,\text{DP}}^*\|_F$ & 0 & 0\\ 
 \hline
 $\|K_{1,\text{DP}}^*-K_{1,\text{DP}\to\text{OS}}^*\|_F/\|K_{1,\text{DP}}^*\|_F$ & 0 & 0 \\ \hline\hline
 $\|K_{0,\text{OS}}^*-K_{0,\text{OS}\to\text{DP}}^*\|_F/\|K_{0,\text{OS}}^*\|_F$ & $2.901\cdot10^{-6}$ & $5.399\cdot 10^{-4}$ \\ \hline
 $\|K_{1,\text{OS}}^*-K_{1,\text{OS}\to\text{DP}}^*\|_F/\|K_{1,\text{OS}}^*\|_F$ & $2.291\cdot10^{-5}$ & $\textbf{3.156}$ \\ \hline
\end{tabular}
\label{DP oneshot lqr}
\begin{tablenotes}
    \scriptsize
    \item The bold value (3.156) being far from 0 supports that the landscape of the one-shot problem is more complex than its DP counterpart. One can find out that the other values in Table II are 0 or close to 0.
\end{tablenotes}
\end{threeparttable}
\end{table}

\section{Conclusion}
\label{sec:con}
In this paper, we studied the optimization landscape of the optimal control problems via two different formulations: one-shot optimization aimed at solving for all input values at the same time, and DP method aimed at finding the input values sequentially. 
For the deterministic problem, we proved that under some mild conditions, each local minimizer of the one-shot optimization corresponds to an input sequence induced by some locally minimum control policy of DP, and vice versa. 

To help better understand the quality of the local solutions obtained by reinforcement learning algorithms, we incorporated exact parameterized policies into the optimal control problem for both deterministic and stochastic dynamics. 
We showed that if the one-shot problem has a low complexity, so do the corresponding DP sub-problems, indicating the success of DP methods.
Moreover, under the condition that there exists only a single locally minimum control policy, with different technical assumptions, both deterministic and stochastic cases yield that a local minimizer of the one-shot optimization is equivalent to a local minimizer of DP. 

We focused on the discrete-time finite-horizon optimal control problem in this work. A natural future direction would be to extend this work to the continuous-time and infinite-horizon cases, which was discussed in Remark \ref{allresult}.
For safety-critical systems, state constraints may also be enforced, with recursive feasibility being crucial to guarantee the success of DP.
One may also want to extend the parameterized policy class beyond a linear combination of basis functions, such as composite functions widely used in deep neural networks.



\printbibliography


\begin{IEEEbiography}[{\includegraphics[width=1in,height=1.25in,clip,keepaspectratio]{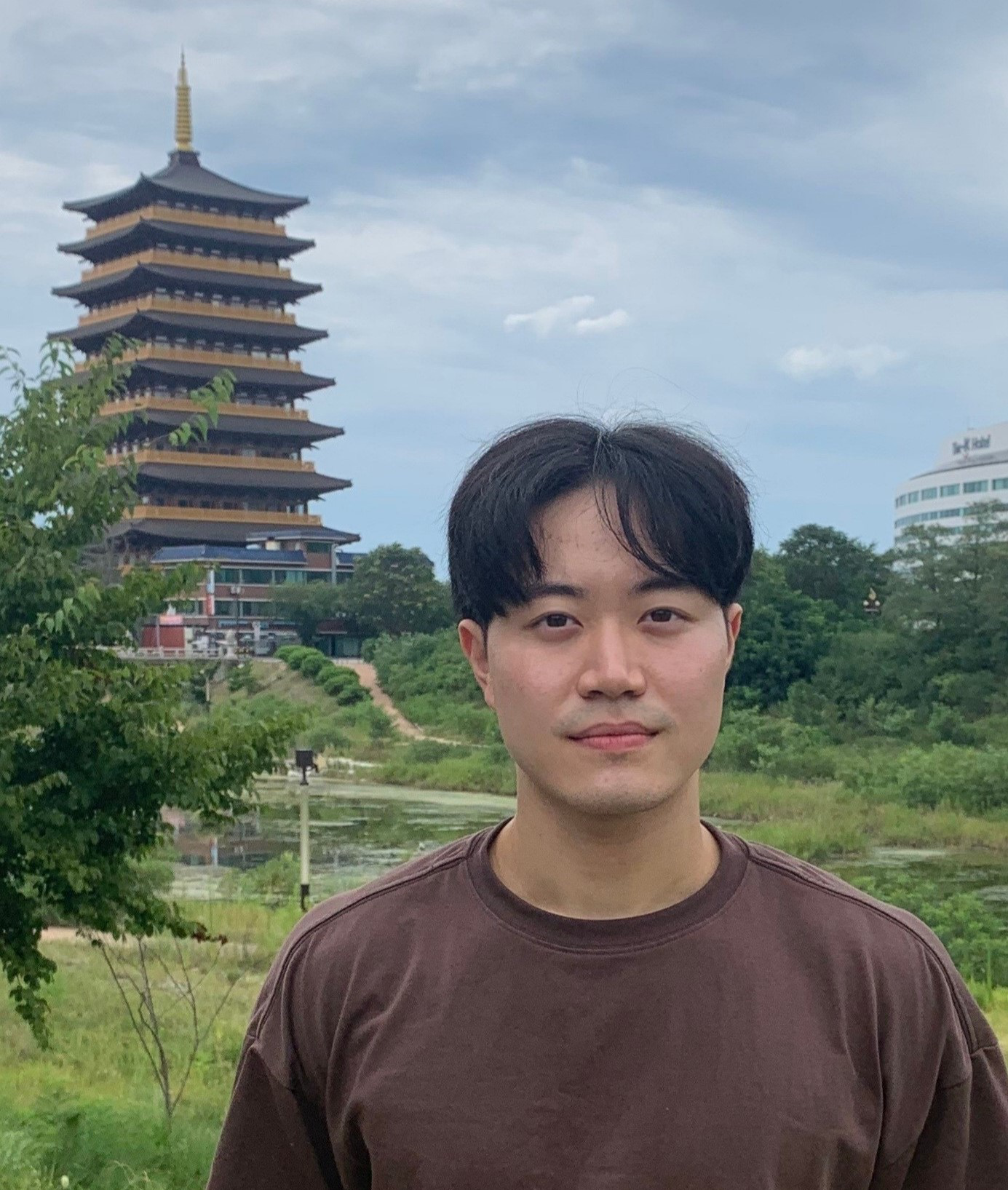}}]{Jihun Kim} is a Ph.D. candidate in Industrial Engineering and Operations Research at the University of California, Berkeley, CA, USA. He obtained the B.S. degree in both Industrial Engineering and Statistics from Seoul National University in 2022. His research interests include nonconvex optimization, dynamical systems, and learning-based control, along with their applications to safety-critical systems such as robotics, aircraft, and autonomous driving.
\end{IEEEbiography}


\begin{IEEEbiography}[{\includegraphics[width=1in,height=1.25in,clip,keepaspectratio]{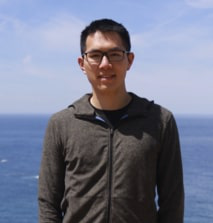}}]{Yuhao Ding} received the Ph.D. degree in Industrial Engineering and Operations Research at the University of California, Berkeley, CA, USA. He obtained the B.E. degree in Aerospace Engineering from Nanjing University of Aeronautics and Astronautics in 2016, and the M.S. degree in Electrical and Computer Engineering from University of Michigan, Ann Arbor in 2018.
\end{IEEEbiography}


\begin{IEEEbiography}[{\includegraphics[width=1in,height=1.25in,clip,keepaspectratio]{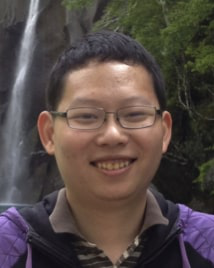}}]{Yingjie Bi} received the B.S. degree in Microelectronics from Peking University in 2014 and the Ph.D. degree in Electrical and Computer Engineering from Cornell University in 2020. He is currently a postdoctoral scholar in the department of Industrial Engineering and Operations Research at the University of California, Berkeley. His research interests include nonconvex optimization, machine learning, computer networks and control theory.
\end{IEEEbiography}


\begin{IEEEbiography}[{\includegraphics[width=1in,height=1.25in,clip,keepaspectratio]{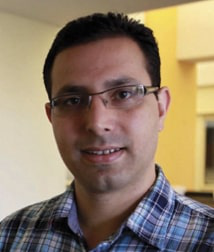}}]{Javad Lavaei} (Fellow, IEEE) is an Associate Professor in the Department of Industrial Engineering and Operations Research at UC Berkeley. He obtained the Ph.D. degree in Control \& Dynamical Systems from California Institute of Technology. He is a senior editor of the IEEE Systems Journal and has served on the editorial boards of the IEEE Transactions on Automatic Control, IEEE Transactions on Control of Network Systems, IEEE Transactions on Smart Grid, and IEEE Control Systems Letters. 
\end{IEEEbiography}

\end{document}